\newcommand{\de}{\mathrm{d}}
\newcommand{\R}{\mathbb{R}}
\newcommand{\N}{\mathbb{N}}
\renewcommand{\P}{\mathscr{P}}
\newcommand{\Riz}{\text{Riesz }}
\newcommand{\spaceV}{\mathscr{V}^k}
\newcommand{\testforms}{\mathscr{D}_0^k}
\newcommand{\averaging}{\mathscr{A}^k}
\renewcommand{\L}{\mathscr{L}}
\newcommand{\revised}[1]{#1}
\DeclareMathOperator*{\Span}{span}
\DeclareMathOperator*{\support}{supp}
\DeclareMathOperator*{\argmin}{arg \ min}
\DeclareMathOperator*{\Card}{Card}
\DeclareMathOperator*{\sgn}{sgn}
\DeclareMathOperator*{\interior}{int}
\DeclareMathOperator*{\vdm}{vdm}
\DeclareMathOperator*{\Tan}{Tan}
\newcommand{\Jac}[1]{\llbracket #1\rrbracket}
\newcommand{\opnorm}[1]{\Vert #1 \Vert_{\mathrm{op}}}
\newcommand{\haus}{\mathcal{H}}
\newtheorem{theorem}{Theorem}
\newtheorem{proposition}{Proposition}
\newtheorem{lemma}{Lemma}
\newtheorem{corollary}{Corollary}
\theoremstyle{definition}
\newtheorem{definition}{Definition}
\theoremstyle{remark}
\newtheorem{remark}{Remark}
\newtheorem{example}{Example}
\newcommand{\myqed}{$ $}
\newtheorem*{theorem*}{Theorem}
\title{The Lebesgue constant for uniform approximation of differential forms}
\author{Ludovico Bruni Bruno}
\address{Dipartimento di Matematica \textquotedblleft Tullio Levi-Civita\textquotedblright, Università di Padova, via Trieste, 63, Padova, 35131, Italia \\
              Istituto Nazionale di Alta Matematica \textquotedblleft Francesco Severi\textquotedblright, Piazzale Aldo Moro, 5, Roma, 00185, Italia}
\email{ludovico.brunibruno@unipd.it}
\author{Federico Piazzon}
\address{Dipartimento di Matematica \textquotedblleft Tullio Levi-Civita\textquotedblright, Università di Padova, via Trieste, 63, Padova, 35131, Italia}
\email{fpiazzon@math.unipd.it}
\begin{document}
\maketitle

\begin{abstract}
    In this work we address the problem of uniform approximation of differential forms starting from weak data defined by integration on rectifiable sets. We study approximation schemes defined by the projection operator $L$ given by either generalized weighted least squares or interpolation. We show that, under a natural measure theoretic condition, the norm of such operator equals the Lebesgue constant of the problem. We finally estimate how the Lebesgue constant varies under the action of smooth mappings from the reference domain to a \textquotedblleft physical\textquotedblright\ one, as is customarily done e.g. in finite elements method. 
\end{abstract}

\section{Introduction}

Differential forms are sections of the $k$-th exterior power of the cotangent bundle of a manifold \cite[\S 14]{Lee}. Hence, they represent vector valued \textquotedblleft functions\textquotedblright\ endowed with an alternating product. This algebraic structure is what makes differential forms tailored for integration: an evocative description is due to Flanders, who presents them as \textquotedblleft things which occur under the integral sign\textquotedblright\ in \cite[p. 1]{Flanders}. This very informal definition stresses how deeply the development of differential forms is intertwined with integration theory and geometric measure theory, and also clarifies why they are often exploited to relate the local and the global behavior of physical phenomena. As a consequence, differential forms are generally exploited to embody the geometry of the domain under consideration into the physics of problems: this perspective is influencing the numerical community as well, from finite elements approximation \cite{ArnoldFalkWintherActa} and mimetic methods \cite{KreeftGerritsma} to, more generally, structure preserving methods for PDEs \cite{Hiemstra}. Especially in this latter context, linear functionals that are represented by an oriented geometrical support in the physical space where integration of the differential form is performed \revised{become relevant}. Such degrees of freedom assume the concrete meaning of circulations along lines (e.g., for the electric field), fluxes across faces (e.g., for the Stokes' flow or the magnetic field), and so on. The extensive and profitable use of differential forms in the aforementioned contexts lead mostly to a study of convergence in Sobolev norms (see, e.g., \cite{GawlikLicht}), which is suggested by regularity theory and functional analysis. On the contrary, a thorough study of continuous differential forms under the light of integration and uniform approximation theory is still lacking. \revised{Under appropriate hypotheses, these two approaches may be related using results of \cite{Stern}.}

In this work we address the study of the uniform approximation of a differential form\revised{ $\omega$ } with continuous coefficients \revised{over a real body $E\subset \R^n$ (i.e., the closure of a bounded domain) } starting from weak data, i.e., the values attained by certain given continuous functionals $\mathcal T:=\{T_i\}_{i=1}^M $ on $\omega$, \revised{with $ M \in \N \cup +\infty$}. The word \textquotedblleft uniform\textquotedblright\ refers to the use of the\revised{ \emph{zero norm} $\|\cdot\|_0$, }introduced in Subsection \ref{subsec:norm}\revised{, that turns the space of continuous differential forms into a Banach space denoted by $\testforms(E)$. }In fact, $\|\cdot\|_0$ proves to be a suitable extension of the uniform norm of continuous functions to the framework of differential forms.\revised{
What makes this norm natural in the present framework is that it involves the action of the differential form on simple $k$-vector fields, tailoring it for working with integral functionals.}

It goes without saying that there is plenty of freedom in the choice of which class of functionals should be considered: in the present work we assume that the functionals in the collection $ \mathcal T $ are currents \cite{deRham} represented by integration.
\revised{Precisely, we focus on the set $\averaging(E)$ of $k$-rectifiable \emph{averaging} currents. Rougly speaking, these functional are measure theoretic generalizations of integral means along oriented manifolds. The precise definition of this set of functionals is stated in Equation \eqref{integralaverage}  of Section \ref{subsec:currents}, while a brief review of integration of differential forms along rectifiable sets is given in Section \ref{sec:rectifiableset}.} 
  \revised{ The choice of considering currents in $\averaging(E)$ as sampling operators } is quite general and rather natural, due to the entanglement of differential forms with integration. Further, it generalizes previous approaches such as Lagrange interpolation by weights \cite{RapettiM2AN} and applies to certain finite volumes schemes \cite{Wendland}.
 
The uniform approximation schemes we study rely on a projection operator $ L : \mathscr D_0^k(E) \to \spaceV(E) $ onto a given $N$-dimensional subspace $\spaceV(E) \subset \mathscr D_0^k(E)  $.  
Relevant instances of finite dimensional subspaces $\spaceV(E)$ our results apply to are forms with polynomial coefficients (N\'ed\'elec second family \cite{Nedelec2}) or trimmed polynomial forms (N\'ed\'elec first family \cite{Nedelec1}, usually also termed \emph{high order Whitney forms} \cite{HiptmairPIER}) customarily used in finite elements exterior calculus \cite{ArnoldFalkWintherActa}, and whose  duality with usual objects of vector calculus is discussed in \cite{Gopalakrishnan}.

\revised{In the setting we propose, $ L $ may be either defined by generalized interpolation (denoted by $L=\Pi$, see   \eqref{eq:interpolationconditions}) of the linear functionals $\mathcal T=\{T_1,\dots,T_N\}$, or by weighted least squares fitting (denoted by $L=P$, see \eqref{discreteleastsquares}) of the (at most) countable family of linear functionals $\mathcal T=\{T_i\}_{i=1}^M $, $M\in \N\cup \{+\infty\}$, with respect to the sequence of positive weights $ \{w_i\}_{i=1}^M $, that we will always assume to satisfy
$$w\in \ell^1(\N),\;\text{ if }M=+\infty.$$ }
Both schemes are formally introduced in Section \ref{sect:problems}, and extend classical approximation theory in several directions: our analysis allows for countable sets of sampling functionals
, merely requiring that\revised{ the supports $S_i$ of the currents $T_i$ }are $k$-rectifiable sets of finite Hausdorff measure $\haus^k$, and has no restrictions on the order of the differential form.

If approximation is carried by projection, the norm of the projection operator plays a prominent role. Indeed, on the one hand $\opnorm{L}$ controls the stability of the fitting procedure via the inequality
$$ \Vert L \omega - L \widetilde{\omega} \Vert_0 \leq \Vert L \Vert_{\mathrm{op}} \|\omega-\widetilde\omega\|_0 ,$$
and on the other hand it is determining for the quality of the obtained approximation via the Lebesgue inequality
$$ \Vert \omega - L \omega \Vert_0 \leq \left(1 + \Vert L \Vert_{\mathrm{op}}\right)  \min_{\eta \in \mathscr V^k (E)} \Vert \omega - \eta \Vert_0 .$$
The above inequalities make self-evident the importance of computing, or at least estimating, the quantity $ \Vert L \Vert_{\mathrm{op}} $. To this end, we introduce the \emph{Lebesgue constant}
\begin{equation} \label{eq:defconstantM}
    \L(\mathcal T,\spaceV(E)) :=\sup_{T\in \averaging(E)} \sum_{i=1}^{M}\left| \sum_{h=1}^NT_i(\eta_h)T(\eta_h)\right|w_i,\;\;\;\revised{M\in \N\cup \{+\infty\}} \,,
\end{equation}
where $ \{\eta_1, \ldots, \eta_N\} $ is any orthonormal basis of $ \spaceV (E) $ with respect to the 
$$(\eta, \vartheta)_{\mathcal T, w}:=\sum_{i=1}^{M}T_i(\eta)T_i(\vartheta)w_i. $$
\revised{We remark that we will carry out our analysis in the setting $M=+\infty$, as the same results easily follow in the finite case. Indeed, if $\mathcal T=\{T_i\}_{i=1}^M$, $w=\{w_i\}_{i=1}^M$, with $M<+\infty$, we can consider the sequence $\widetilde{\mathcal T}$ obtained from $\mathcal T$ by padding it by zero currents and any sequence of weights $\tilde w=\{\tilde w_i\}_{i=1}^M\in \ell^1(\N)$ such that $\tilde w_i=w_i$ for any $i\leq M$. Nevertheless, we will sometimes specialize our results to the relevant setting of interpolation, where in fact $M=N<+\infty.$}

The quantity $ \L(\mathcal T,\spaceV(E)) $ entangles geometrical and analytical aspects of the currents in $ \mathcal{T} $ and the space $ \spaceV (E) $. As first main result of the paper, we extend the well-known equality between the norm of the projection operator $ P $ and the Lebesgue constant from the nodal fitting of functions to the general framework described above.
\begin{theorem}[Characterization of the norm of the projection operator]\label{thm1}
Let $\mathcal T=\{T_1,T_2,\dots\}$  be a countable $\spaceV(E)$-determining subset of $\averaging(E)$ and $w\in \ell^1(\N)$. If $\haus^k(\support T_i \cap \support T_j)=0$ for $i\neq j$, then the discrete least squares projection $P$ satisfies
\begin{equation}\label{Pnormestimate}
\opnorm{P}= \L(\mathcal T,\spaceV (E))\,.
\end{equation}
Inequality  $\opnorm{P}\leq\L(\mathcal T,\spaceV (E))$ still holds even if $\haus^k(\support T_i \cap \support T_j)>0$ for some $i\neq j$.
\end{theorem}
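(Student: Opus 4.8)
The plan is to prove $\opnorm{P}\leq\L(\mathcal T,\spaceV(E))$ unconditionally and $\opnorm{P}\geq\L(\mathcal T,\spaceV(E))$ under the hypothesis on the supports; the two bounds together, plus the observation that the first one ignores the supports, give all the assertions. Throughout I would use the representation $P\omega=\sum_{h=1}^N(\omega,\eta_h)_{\mathcal T,w}\,\eta_h$ and the norming identity $\|\psi\|_0=\sup_{T\in\averaging(E)}|T(\psi)|$ for $\psi\in\testforms(E)$ (one inequality being immediate since averaging currents have operator norm at most $1$, the other because the pointwise comass at an interior point of $E$ is the limit of the integral means over shrinking oriented $k$-disks).

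\emph{Upper bound.} Fix $\omega$ with $\|\omega\|_0\leq1$ and $T\in\averaging(E)$. The sum over $h$ is finite and the double series converges absolutely (as $|T_i(\cdot)|,|T(\cdot)|\leq\|\cdot\|_0$ and $w\in\ell^1(\N)$), so it can be reordered:
\begin{equation*}
T(P\omega)=\sum_{h=1}^N\Big(\sum_{i=1}^{\infty}T_i(\omega)\,T_i(\eta_h)\,w_i\Big)T(\eta_h)=\sum_{i=1}^{\infty}T_i(\omega)\,w_i\sum_{h=1}^{N}T_i(\eta_h)\,T(\eta_h).
\end{equation*}
Hence $|T(P\omega)|\leq\|\omega\|_0\sum_{i=1}^{\infty}\big|\sum_{h=1}^{N}T_i(\eta_h)T(\eta_h)\big|\,w_i$; taking the supremum over $T\in\averaging(E)$ and using the norming identity yields $\|P\omega\|_0\leq\L(\mathcal T,\spaceV(E))$. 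This argument never used a hypothesis on the supports, so it also proves the final claim.

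\emph{Lower bound.} First, $\L(\mathcal T,\spaceV(E))\leq\big(\sum_h\|\eta_h\|_0^2\big)\|w\|_{\ell^1}<\infty$. Given $\varepsilon>0$, pick $T^*\in\averaging(E)$ with $\sum_i|c_i|w_i\geq\L(\mathcal T,\spaceV(E))-\varepsilon$, where $c_i:=\sum_h T_i(\eta_h)T^*(\eta_h)$; then pick $N_0$ with $\sum_{i>N_0}|c_i|w_i<\varepsilon$, and set $\sigma_i:=\sgn c_i$. It suffices to produce, for each $\delta>0$, some $\omega\in\testforms(E)$ with $\|\omega\|_0\leq1$ and $|T_i(\omega)-\sigma_i|<\delta$ for all $i\leq N_0$: the same reordering then gives $T^*(P\omega)=\sum_i T_i(\omega)\,c_i\,w_i\geq\sum_{i\leq N_0}|c_i|w_i-\delta\sum_{i\leq N_0}|c_i|w_i-\varepsilon\geq\L(\mathcal T,\spaceV(E))-2\varepsilon-\delta\,\L(\mathcal T,\spaceV(E))$, and since $\|P\omega\|_0\geq|T^*(P\omega)|$ while $\|\omega\|_0\leq1$, letting $\delta,\varepsilon\to0$ yields $\opnorm{P}\geq\L(\mathcal T,\spaceV(E))$.

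\emph{Constructing $\omega$.} This is the step where the disjointness of the supports enters and where I expect the main work to lie. On the $k$-rectifiable, $\haus^k$-finite set $S:=\bigcup_{i\leq N_0}\support T_i$ define a measurable section $\omega_0$ by prescribing, at $\haus^k$-a.e.\ $x\in\support T_i$, that $\omega_0(x)$ equal $\sigma_i$ times the unit simple covector dual to the oriented approximate tangent $k$-plane of $\support T_i$ at $x$; this is consistent $\haus^k$-a.e.\ on $S$ precisely because $\haus^k(\support T_i\cap\support T_j)=0$ for $i\neq j$, and it satisfies $\langle\omega_0(x),\vec T_i(x)\rangle=\sigma_i$ with pointwise comass $\leq1$. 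Lusin's theorem (for the finite Radon measure $\haus^k$ restricted to $S$) gives a closed set $K\subseteq S\subseteq E$ with $\omega_0|_K$ continuous and $\haus^k(S\setminus K)$ as small as desired; choosing it small enough that, by absolute continuity of the $\haus^k$-integrable densities representing the finitely many currents $T_1,\dots,T_{N_0}$, each $T_i$ puts mass $<\delta/2$ on $S\setminus K$, one obtains $|T_i(\omega)-\sigma_i|<\delta$ for $i\leq N_0$ and any continuous $\omega$ extending $\omega_0|_K$ with $\|\omega\|_0\leq1$. Finally extend $\omega_0|_K$ from the closed set $K$ to all of $E$ with $\|\cdot\|_0\leq1$ — e.g.\ by Tietze's theorem applied componentwise followed by the radial retraction $v\mapsto v/\max\{1,\rho(v)\}$ onto the comass unit ball of the space of $k$-covectors (a compact convex body with the origin interior, $\rho$ its gauge) — to get the required $\omega\in\testforms(E)$. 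The delicate points are the $\haus^k$-a.e.\ well-posedness of $\omega_0$ (the exact role of the hypothesis), the coupling of the size of the Lusin exceptional set with the absolute continuity of the currents, and verifying that the comass ball admits the stated retraction.
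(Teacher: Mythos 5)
Your proposal is correct in outline, and its equality part takes a genuinely different route from the paper. The upper bound is the same as the paper's Lemma \ref{lemma:inequalitycase} (and, as you half-observe, the norming identity $\|\psi\|_0=\sup_{T\in\averaging(E)}|T(\psi)|$ is literally the definition \eqref{0normdef}, so no shrinking-disk argument is needed). For the reverse inequality the paper first proves $\opnorm{P}\ge\L$ only for finitely many currents whose supports are disjoint unions of compact pieces of $\mathscr C^1$ submanifolds (Lemma \ref{lemma:equalitycase}, via bump forms in pairwise disjoint open neighborhoods), and then reaches the general case by perturbing $\mathcal T$ itself: Lusin's theorem plus inner regularity produce approximants $\mathcal T^{(\varepsilon)}$ with such nice supports, and one proves uniform convergence $\mathcal T^{(\varepsilon)}\to\mathcal T$, strong convergence $P^{(\varepsilon)}\to P$ and lower semicontinuity of $\L_\varepsilon$, concluding through \eqref{eq:toproveth1}. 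You instead keep $\mathcal T$ fixed, truncate the sum to $i\le N_0$ (using $w\in\ell^1(\N)$ and the unit mass of averaging currents), glue the dual-orientation covector field $\sigma_i\,\tau_1^*\wedge\dots\wedge\tau_k^*$ on the union of the carriers --- well defined $\haus^k$-a.e.\ precisely because of the null-intersection hypothesis --- regularize it by Lusin, and extend by Tietze followed by the radial retraction onto the comass unit ball; note that only the easy implication ``pointwise comass $\le 1$ implies $\|\omega\|_0\le 1$'' is needed there, not the full strength of \eqref{normequivalence2}. Your route is shorter and avoids the operator-approximation machinery altogether; the paper's longer detour buys, as by-products, the continuity of $\L$ under point-wise convergence (Proposition \ref{prop:continuityL}) and the strong convergence of the associated projectors (Corollary \ref{cor:continuityoperators}).

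Two points to tighten. First, define $\omega_0$ through the carriers $S_i$ and their orientations $\tau_i$ (i.e.\ $T_i=[S_i,\tau_i]/\haus^k(S_i)$) rather than through ``the approximate tangent space of $\support T_i$'': the support of the current can be much larger than $S_i$ (for instance when the carrier is a dense rectifiable set, as in the paper's second example), so its approximate tangent planes need not exist; the hypothesis on supports still yields $\haus^k(S_i\cap S_j)=0$ since $\haus^k(S_i\setminus\support T_i)=0$. Second, the ``absolute continuity'' step is immediate in this setting: it suffices to take the Lusin exceptional set of $\haus^k$-measure less than $\tfrac{\delta}{2}\min_{i\le N_0}\haus^k(S_i)$, which gives $|T_i(\omega)-\sigma_i|\le 2\,\haus^k(S_i\setminus K)/\haus^k(S_i)<\delta$ for every $i\le N_0$.
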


The proof of Theorem \ref{thm1} is carried in Section \ref{sec:3} by an approximation argument in which countable subsets of $ \averaging (E) $ are identified with linear operators from $ \testforms (E) $ to $ \ell^1_w (\N) $. As a by-product, we obtain the continuity of the Lebesgue constant with respect to the point-wise convergence of $ \mathcal{T}^{(\varepsilon)} \to \mathcal T$ \revised{(seen as linear operators from $ \testforms (E) $ to $ \ell^1_w (\N) $) } as $\varepsilon\to 0$, see Proposition \ref{prop:continuityL}, while the uniform convergence of $ \mathcal{T}^{(\varepsilon)} $ implies the strong convergence of the induced projection operators, see Corollary \ref{cor:continuityoperators}.

In the interpolatory case, when in particular the cardinality of $ \mathcal T $ is $ N < +\infty $, the orthonormal basis reduces to\revised{ a scaling of }the Lagrange (cardinal) basis $ \{ \omega_1, \ldots, \omega_N \} $ defined in  \eqref{eq:LagrangeRepresentationWF}\revised{: namely we have $\eta_h=\omega_h/\sqrt{w_h}$, for $h=1,\dots,N,$ }and the Lebesgue constant introduced in   \eqref{eq:defconstantM} assumes the peculiar form
\begin{equation} \label{eq:defconstantL}
    \L(\mathcal T,\spaceV (E)) =\sup_{T\in \averaging(E)} \sum_{i=1}^N \left|  T(\omega_i)\right|.
\end{equation} 
In particular, when the supports of the currents are taken to be simplices, the quantity 
\eqref{eq:defconstantL} 
corresponds to the Lebesgue constant introduced in \cite{AlonsoRapettiLebesgue} in the context of Whitney forms. 
This is an instance of a more general relationship, which is discussed in detail in Section \ref{sect:MtoN}. In fact, in the interpolatory case, Theorem \ref{thm1} specializes as follows:
\begin{corollary} \label{cor:norminterp}
Let $\mathcal T=\{T_1,\dots,T_N\}\subset \mathscr \averaging^k(E)$ be a $\spaceV (E) $-unisolvent set. The interpolation operator $\Pi$ satisfies
\begin{equation}\label{Pinormestimate}
\opnorm{\Pi}\leq \L(\mathcal T,\spaceV (E)).
\end{equation}
Further, equality in   \eqref{Pinormestimate} is attained in the case $\haus^k(\support T_i \cap \support T_j)=0$ for $i\neq j$. 
\end{corollary}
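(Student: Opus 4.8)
The plan is to deduce Corollary \ref{cor:norminterp} from Theorem \ref{thm1} by realizing the interpolation operator $\Pi$ as a discrete least squares projection $P$ associated with a suitably padded and weighted family of currents. Concretely, starting from the $\spaceV(E)$-unisolvent set $\mathcal T=\{T_1,\dots,T_N\}$, I would form the countable family $\widetilde{\mathcal T}$ obtained by appending zero currents, with weights $w=\{w_i\}_{i=1}^{\infty}\in\ell^1(\N)$ chosen so that the first $N$ entries are, say, all equal to $1$ (or any positive values). The key observation, already flagged in the excerpt, is that since $\mathcal T$ is unisolvent the bilinear form $(\eta,\vartheta)_{\widetilde{\mathcal T},w}=\sum_{i=1}^N T_i(\eta)T_i(\vartheta)w_i$ is an inner product on $\spaceV(E)$ and, with the weights chosen, the Lagrange basis $\{\omega_1,\dots,\omega_N\}$ satisfying $T_i(\omega_h)=\delta_{ih}$ is $\widetilde{\mathcal T}$-orthogonal, so $\eta_h=\omega_h/\sqrt{w_h}$ is an orthonormal basis. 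For this family the least squares projection $P$ coincides with $\Pi$: indeed $P\omega$ is the unique element of $\spaceV(E)$ minimizing $\sum_i |T_i(\eta)-T_i(\omega)|^2 w_i = \sum_{i=1}^N |T_i(\eta)-T_i(\omega)|^2 w_i$, and since the map $\eta\mapsto (T_1(\eta),\dots,T_N(\eta))$ is a bijection from $\spaceV(E)$ onto $\R^N$, the minimum is $0$ and is attained precisely at the $\eta$ with $T_i(\eta)=T_i(\omega)$ for all $i$, i.e. at $\Pi\omega$.

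Having identified $\Pi=P$ for this choice, Theorem \ref{thm1} gives immediately $\opnorm{\Pi}=\opnorm{P}\le \L(\widetilde{\mathcal T},\spaceV(E))$, with equality when $\haus^k(\support T_i\cap\support T_j)=0$ for $i\ne j$. It then remains to check that the padding does not change the Lebesgue constant, i.e. that $\L(\widetilde{\mathcal T},\spaceV(E))=\L(\mathcal T,\spaceV(E))$ where the right-hand side is the finite-sum expression \eqref{eq:defconstantL}. This is a direct computation: the zero currents contribute $T_i(\eta_h)=0$ for $i>N$ to the sum defining \eqref{eq:defconstantM}, so only the first $N$ terms survive; plugging in $\eta_h=\omega_h/\sqrt{w_h}$ and using $T_i(\omega_h)=\delta_{ih}$ collapses the double sum $\sum_{i=1}^N|\sum_{h=1}^N T_i(\eta_h)T(\eta_h)|w_i$ to $\sum_{i=1}^N |T(\omega_i)|$, which is exactly \eqref{eq:defconstantL}. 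Taking the supremum over $T\in\averaging(E)$ yields $\L(\widetilde{\mathcal T},\spaceV(E))=\L(\mathcal T,\spaceV(E))$.

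I expect no serious obstacle here, since the corollary is essentially a specialization of the already-established theorem; the only points requiring a little care are (i) verifying rigorously that unisolvence forces $P=\Pi$, which hinges on the bijectivity of the evaluation map and the fact that a nonnegative finite sum of squares vanishes iff each term does, and (ii) checking that the hypothesis $\haus^k(\support T_i\cap\support T_j)=0$ for $i\ne j$ among the original $N$ currents is exactly what is needed, as the appended zero currents have empty support and hence trivially satisfy the disjointness condition against everything. One should also note that the inequality $\opnorm{\Pi}\le\L(\mathcal T,\spaceV(E))$ in the general (possibly overlapping supports) case follows from the last sentence of Theorem \ref{thm1}, so no additional argument is needed there.
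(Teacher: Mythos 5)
Your proposal is correct and follows essentially the same route as the paper: the authors also realize $\Pi$ as the least squares projector $P$ by padding $\mathcal T$ with zero currents and summable weights (see the remarks on the finite case and on $\Card\mathcal T=N$, where the orthonormal basis becomes $\eta_h=\omega_h/\sqrt{w_h}$), and then invoke Theorem \ref{thm1} together with the collapse of \eqref{eq:defconstantM} to \eqref{eq:defconstantL} carried out in Section \ref{sect:MtoN}. Your observations that unisolvence forces $P=\Pi$ and that the appended zero currents trivially satisfy the support-disjointness hypothesis are exactly the points the paper relies on.
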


The nodal Lebesgue constant depends only on the positioning of points inside a fixed domain: transforming appropriately the points and the domain does not change the precomputed value. In many applications, such as finite elements approximation and its curvilinear variant (see, e.g., \cite{ArBoBo15}), it is customary to fix a reference element $ \widehat E $, where most computations are performed, and then to transfer the obtained results to the physical domain  $ E $ via an appropriate mapping, say a $\mathscr C^1$-diffeomorphism $\varphi:\widehat E \rightarrow E.$ It is then natural to investigate how 
$ \L $ is affected by this procedure. 

To this end, suppose to be given a space $ \spaceV (\widehat E) \subset\testforms(\widehat{E})$ on the reference element, endowed with a $ \spaceV (\widehat E) $-determining set of currents\revised{ $\widehat {\mathcal T} = \{ \widehat T_i\}$ and a relative positive summable sequence of weights $\widehat w=\{ \widehat w_i\}$, see   }\eqref{eq:determining}. 
The pullback $ \varphi^*: \mathscr D_0^k( E) \to \mathscr D_0^k(\widehat E) $ defines a space $\mathscr V^k(E):=\{ \omega \in \mathscr D_0^k(E) \,:\, \varphi^*\omega\in \mathscr V^k(\widehat E)\}$ on the physical element, 
which inherits from $\spaceV(\widehat E)$ the scalar product 
$ ( \omega, \eta)_{\mathcal T,w} : = (\varphi^*\omega,\varphi^*\eta)_{\widehat{\mathcal T},\widehat w}$, and hence a least squares projector $ P $. The quantities\revised{ ${\mathcal T}=\{T_i\}$ and $w=\{ w_i\}$ }are suitable renormalizations of the corresponding quantities on $ \widehat E $
, and are discussed in Subsection \ref{sect:pushforward}. 
Then the following relationship holds:
\begin{theorem}[Dependence of $ \L $ on $ E $] \label{thm2}
Let $ \varphi:\widehat E\rightarrow E $ be a $\mathscr C^1$-diffeomorphism. Then one has
	\begin{equation} \label{eq:estimationchangingsimplexI}
		\L ({\mathcal T}, \mathscr V^k (E) ) \leq \left\|\prod_{j=1}^k \sigma^{(\varphi)}_j\right\|_{\mathscr C^0(\widehat E)} \left\| \frac 1{\prod_{j=n-k+1}^n\sigma^{(\varphi)}_j}\right\|_{\mathscr C^0(\widehat E)} \L (\widehat{\mathcal T}, \mathscr V^k (\widehat{E}) )\,,
	\end{equation}
where $\left\{\sigma^{(\varphi)}_1,\dots,\sigma^{(\varphi)}_n\right\}$ are the singular value functions of the differential of $\varphi$ arranged in a non-increasing order, and $\|\cdot\|_{\mathscr C^0(\widehat E)}$ denotes the uniform norm of continuous functions on $\widehat E$.
\end{theorem}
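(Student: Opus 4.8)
The plan is to transport the supremum defining $\L(\mathcal T,\spaceV(E))$ back to the reference element through the pullback $\varphi^*$, paying an explicit, uniformly controlled price at each step. First note that, by the very definition of the inner product $(\omega,\eta)_{\mathcal T,w}:=(\varphi^*\omega,\varphi^*\eta)_{\widehat{\mathcal T},\widehat w}$ on $\spaceV(E)$, the pullback $\varphi^*\colon\spaceV(E)\to\spaceV(\widehat E)$ is an isometric isomorphism of inner product spaces: it is injective on $\testforms(E)$, $\omega\mapsto(\varphi^{-1})^*\omega$ is its two-sided inverse, and it maps $\spaceV(E)$ bijectively onto $\spaceV(\widehat E)$. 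Consequently, if $\{\eta_1,\dots,\eta_N\}$ is a $(\cdot,\cdot)_{\mathcal T,w}$-orthonormal basis of $\spaceV(E)$, then the forms $\widehat\eta_h:=\varphi^*\eta_h$, $h=1,\dots,N$, form a $(\cdot,\cdot)_{\widehat{\mathcal T},\widehat w}$-orthonormal basis of $\spaceV(\widehat E)$. Moreover, from the construction recalled in Subsection \ref{sect:pushforward}, each $T_i$ is obtained by renormalising the pushforward of $\widehat T_i$, so that $T_i(\omega)=d_i\,\widehat T_i(\varphi^*\omega)$ for every $\omega\in\spaceV(E)$, with $d_i>0$ the renormalising constant, and $w_i=\widehat w_i/d_i^{\,2}$ --- the last identity being exactly what forces $\sum_i T_i(\omega)T_i(\eta)w_i=(\varphi^*\omega,\varphi^*\eta)_{\widehat{\mathcal T},\widehat w}$.

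The next step treats an arbitrary test current. Fix $T\in\averaging(E)$ and set $S:=\support T$. Since $\varphi$ is a $\mathscr C^1$-diffeomorphism between the compact sets $\widehat E$ and $E$, it is bi-Lipschitz; hence $\varphi^{-1}(S)$ is again $k$-rectifiable with $\haus^k(\varphi^{-1}(S))<+\infty$ (see Section \ref{sec:rectifiableset}), and the renormalised pushforward of $T$ under $\varphi^{-1}$ is an averaging current $\widetilde T\in\averaging(\widehat E)$ supported on $\varphi^{-1}(S)$; write $(\varphi^{-1})_\#T=c_T\,\widetilde T$ with $c_T>0$. By adjointness of pushforward and pullback, and since $(\varphi^{-1})^*\widehat\eta_h=\eta_h$, one gets
\[
T(\eta_h)=T\bigl((\varphi^{-1})^*\widehat\eta_h\bigr)=(\varphi^{-1})_\#T(\widehat\eta_h)=c_T\,\widetilde T(\widehat\eta_h),\qquad h=1,\dots,N.
\]

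It then remains to bound the constants $d_i^{-1}$ and $c_T$ by the singular value functions. Both are total masses of pushed measures, hence expressible through the $k$-dimensional tangential Jacobian: by the area formula, for any $k$-rectifiable $A\subset\widehat E$ one has $\haus^k(\varphi(A))=\int_A J^{\mathrm{tan}}\varphi\,\de\haus^k$, where $J^{\mathrm{tan}}\varphi(\hat x)$ is the product of the singular values of the restriction of $\de\varphi(\hat x)$ to the approximate tangent plane of $A$ at $\hat x$. The Poincar\'e separation (Cauchy interlacing) inequalities for singular values give the pointwise two-sided bound $\prod_{j=n-k+1}^{n}\sigma^{(\varphi)}_j(\hat x)\le J^{\mathrm{tan}}\varphi(\hat x)\le\prod_{j=1}^{k}\sigma^{(\varphi)}_j(\hat x)$. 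Tracing through the pushforward formulas of Subsection \ref{sect:pushforward}, applying the upper bound along $\support\widehat T_i$ gives $d_i^{-1}\le\bigl\|\prod_{j=1}^{k}\sigma^{(\varphi)}_j\bigr\|_{\mathscr C^0(\widehat E)}$ for all $i$, while applying the lower bound along $\varphi^{-1}(S)$, together with $\haus^k(S)=\haus^k(\varphi(\varphi^{-1}(S)))$, gives $c_T\le\bigl\|1/\prod_{j=n-k+1}^{n}\sigma^{(\varphi)}_j\bigr\|_{\mathscr C^0(\widehat E)}$ uniformly in $T$. (These bounds also show $w\in\ell^1(\N)$ is inherited from $\widehat w$.)

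Finally I would assemble everything. Substituting $T_i(\eta_h)=d_i\widehat T_i(\widehat\eta_h)$, $T(\eta_h)=c_T\widetilde T(\widehat\eta_h)$ and $w_i=\widehat w_i/d_i^{\,2}$ into \eqref{eq:defconstantM},
\[
\sum_{i}\Bigl|\sum_{h=1}^{N}T_i(\eta_h)T(\eta_h)\Bigr|w_i
=c_T\sum_{i}\frac{1}{d_i}\Bigl|\sum_{h=1}^{N}\widehat T_i(\widehat\eta_h)\widetilde T(\widehat\eta_h)\Bigr|\widehat w_i
\le c_T\,\Bigl\|\prod_{j=1}^{k}\sigma^{(\varphi)}_j\Bigr\|_{\mathscr C^0(\widehat E)}\,\L\bigl(\widehat{\mathcal T},\spaceV(\widehat E)\bigr),
\]
where the last inequality uses the uniform bound on $d_i^{-1}$ together with $\widetilde T\in\averaging(\widehat E)$ and the fact that $\{\widehat\eta_h\}$ is a $(\cdot,\cdot)_{\widehat{\mathcal T},\widehat w}$-orthonormal basis of $\spaceV(\widehat E)$, so that the remaining series is bounded by $\L(\widehat{\mathcal T},\spaceV(\widehat E))$ by the very definition \eqref{eq:defconstantM}. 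Inserting the bound on $c_T$ and taking the supremum over $T\in\averaging(E)$ yields \eqref{eq:estimationchangingsimplexI}. I expect the main obstacle to be the bookkeeping of the first and third steps: extracting the renormalising constants $d_i$, $w_i$, $c_T$ from the pushforward construction and combining them correctly with the two-sided Jacobian/singular-value inequality; once those are in place the concluding estimate is a direct substitution. A minor point, immediate from the bi-Lipschitz property of $\varphi$ on compact sets, is that $\varphi$ and $\varphi^{-1}$ preserve $k$-rectifiability and finiteness of $\haus^k$, so that $\widetilde T$ genuinely lies in $\averaging(\widehat E)$.
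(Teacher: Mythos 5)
Your proposal is correct and follows essentially the same route as the paper: you pull the Lebesgue-constant sum back to the reference element, and your constants $d_i^{-1}=\haus^k(S_i)/\haus^k(\widehat S_i)$ and $c_T=\haus^k(\widehat S)/\haus^k(S)$ are exactly the paper's normalization ratios, which are then bounded by the singular-value products just as in the paper via the area formula and interlacing (its Lemma \ref{lem:fede}). The only cosmetic difference is that you state the tangential-Jacobian/singular-value bound pointwise, whereas the paper isolates it as a separate lemma proved through charts, Cauchy--Binet and the same interlacing theorem.
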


Theorem \ref{thm2} applies to the interpolatory case and is a uniform nonlinear counterpart of the error estimate frequently invoked in local error analysis of finite elements \cite[\S 11.2]{ErnI}, where affine mappings are considered. Further, it applies not only to functions but to differential forms of any order $k$. In this spirit, a closer look to   \eqref{eq:estimationchangingsimplexI} unveils that the ratio of the Lebesgue constants is bounded by the conditioning of the linear map $ D \varphi $ raised to the power $k$, which essentially captures the complexity of the underlying geometry of the problem. A more detailed discussion on the consequences of Theorem \ref{thm2} is developed in Section \ref{sect:changing}.

\section{Background} \label{sec:Background}

\revised{
The aim of this section is to review the basic mathematical background, mostly regarding integration of differential forms on rectifiable sets, currents, and the norms on differential forms, we shall extensively use in the subsequent sections.

\subsection{Vectors, covectors and norms} We use the symbol $\Lambda_k$ for the $k$-th exterior power of $\R^n$. By construction, $ \Lambda_k $ is an $\binom{n}{k}$-dimensional vector space, which is endowed by the Euclidean scalar product $(\cdot,\cdot)_{\Lambda_k}$, see, e.g., \cite[p. 16]{Bhatia}. The corresponding norm $ | \cdot |_{\Lambda_k} $ is the Euclidean norm for $k$-vectors. 
Note that, in particular, $ |\cdot|_{\Lambda_0} $ is the absolute value $ \vert \cdot \vert $ and $ |\cdot|_{\Lambda_1} $ is the Euclidean distance $ \Vert \cdot \Vert $ between elements of $ \R^n $. 
Similarly, $ \Lambda^k $ denotes the $k$-th exterior power of the space of linear forms on $\R^n$. This space is endowed by the Euclidean scalar product $(\cdot,\cdot)_{\Lambda^k}$, and we denote the corresponding (Euclidean) norm by $ | \cdot |_{\Lambda^k} $.

Besides the Euclidean norm, on $\Lambda^k$ we will also consider the \emph{comass norm} $ | \cdot |^* $. Recalling that a $k$-vector $ \tau $ is \emph{simple} if it can be written in the form $ \tau = v_1 \wedge \ldots \wedge v_k $, with $ v_i \in \R^n $ for each $ i$, such a norm is defined as
$$|\omega|^*:=\sup\{|\langle\omega;\tau\rangle|,\;\tau\in \Lambda_k \text{ simple},|\tau|_{\Lambda_k}\leq 1\}. $$
Here and throughout the paper we denote by $\langle\cdot\,;\cdot\rangle$ the dual pairing of $\Lambda^k$ with $\Lambda_k.$

By definition, the comass norm $|\cdot|^*$ of a $k$-covector $\omega$ cannot exceed its Euclidean norm $|\omega|_{\Lambda^k}$, equality being attained only when $\omega$ is a simple $k$-covector. The converse inequality also holds up to the factor $\sqrt{\binom{n}{k}}$ by H\"older Inequality. This yields the comparability of the two norms, i.e., for each $\omega\in \Lambda^k$
\begin{equation}\label{eq:euclideanvscomass}
|\omega|^*\leq |\omega|_{\Lambda^k}\leq \sqrt{\binom{n}{k}}|\omega|^*\,.
\end{equation}
Improved comparability constant can be given \cite{comass}. It is also worth mentioning here that there are cases for which all $k$-covectors are simple  (e.g., when $k\in\{0,1,n-1,n\}$ and arbitrary $n\in \N$, or when $n\leq 3$ and arbitrary $k\leq n$) and thus the two norms coincide.



\subsection{Rectifiable sets and integration}\label{sec:rectifiableset}
We recall that a set $S\subset \R^n$ is said to be $k$-\emph{rectifiable} if it can be covered by the union of a set having zero $k$-dimensional Hausdorff measure (hereafter denoted by $\haus^k$) and a countable union of Lipschitz images of $\R^k$ \cite[ Definition 5.4.1]{Krantz}. When $S$ is $\haus^k$-measurable and $k$-rectifiable, then it is possible \cite[Proposition 5.4.3]{Krantz} to write $S=\cup_{i=0}^{+\infty} S_i$, where $S_i\cap S_j=\emptyset$ whenever $i\neq j$, $\haus^k(S_0)=0$, and, for all $i>0$, $S_i$ is a measurable subset of a $\mathscr C^1$ embedded $k$-submanifold of $\R^n$. 

The standard definition of tangent space is not well-suited for sets that do not admit a differential structure. In fact, in geometric measure theory it is customarily used a more relaxed notion of tangent space, given by functional and measure theoretic means. Namely, if $S$ is a $\haus^k$-measurable set of locally finite $\haus^k$ measure, we term a $k$-dimensional linear subspace $V$ of $\R^n$ the \emph{approximate tangent space of}$S$\emph{ at }$x$ if, for any continuous and compactly supported function $f\in \mathscr C^0_c(\R^n)$, one has
$$\lim_{t\to 0^+}\int_{\{y\in \R^n: \ ty+x\in S\}}f(y) d\haus^k(y)=\int_Vf(y)d\haus^k(y)\,.$$
If such a $V$ exists, then it is uniquely determined, and we use the notation $\Tan(S,x)$ for it. This notion of tangent space generalizes the definition of the tangent space of a $k$-submanifold of class $\mathscr C^1$, as the two notions coincides if $S$ bears such a regularity. In contrast, it is rather clear that $\Tan(S,x)$ might not exist for some $x\in S$ for less regular sets $S\subset \R^n$. Remarkably, if $S$ is a $k$-rectifiable set of locally finite measure, then $\Tan(S,x)$ does exist for $\haus^k$-almost every $x\in S$, see \cite[Theorem 5.4.6]{Krantz}.
\begin{definition}[Orientation of a $k$-rectifiable set]\label{def:orientation}
An \emph{orientation} for the $\haus^k$-measurable $k$-rectifiable set $S$ is a \emph{simple} $k$-vector field $S\ni x\mapsto \tau(x)=\tau_1(x)\wedge\dots\wedge \tau_k(x)\in \Lambda_k$ such that, for $\haus^k$-almost every $x\in S$, $|\tau|_{\Lambda_k}=1$ and $\Tan(S,x)=\Span\{\tau_1(x),\dots,\tau_k(x)\}$. The $\haus^k$-measurable $k$-rectifiable set $S$ is said to be \emph{oriented} if it is endowed by an orientation. 
\end{definition}
\begin{remark}
It is worth pointing out here that the concept of orientation given in Definition \ref{def:orientation} is applicable only to $k$-rectifiable sets that have locally finite measure, since the definition of the approximate tangent space relies on such an assumption. For this reason, when we declare that a set $S$ is oriented, we also  implicitly assume that it has locally finite $\haus^k$-measure.  
\end{remark}

We may then define the \emph{integral} on the $k$-rectifiable set $S$ with respect to the orientation $\tau$ {of a $k$-form} $\omega:\R^n\rightarrow \Lambda^k$ with continuous\footnote{The notion of continuity here does not depend on the choice of a basis of $\Lambda^k$, neither on the particular choice of a norm on $\Lambda^k$, being the latter a finite dimensional vector space.} and compactly supported coefficients  as
\begin{equation}\label{eq:defineintegration}
\int_S \langle\omega(x);\tau(x)\rangle \de \haus^k(x)\, .
\end{equation}
\begin{remark}\label{rem:integrationofcontinuous}
When $S$ has in fact \emph{finite} $\haus^k$-measure, the hypothesis on the compactness of the support of $\omega$ can be dropped, and thus \eqref{eq:defineintegration} extends to continuous forms.
\end{remark} 
\begin{remark}[Orientability of $k$-rectifiable set of locally finite measure]\label{rem:orientability}
The concept of orientability usually applied to differentiable manifolds is quite stiff: indeed, when $S$ is regarded as a $k$-submanifod of $\R^n$, either it is orientable (in exactly two possible ways) or not.  
In constrast, in the framework of $k$-rectifiable sets  one can always define an orientation (in the sense of Definition \ref{def:orientation}), provided the set under consideration has locally finite $\haus^k$-measure, see, e.g., \cite[Proposition 1.3.4]{Ma13}. 
\end{remark}
The following two examples give an account of Remark \ref{rem:orientability}. 
\begin{example}
Consider the $1$-submanifold $S=(0,1)$ of $\R$. Let $\phi\in\mathscr C^0_c(\R,[0,1])$,with $\phi(x)=1$, for any $x\in S$, and let $\omega=\phi(x) dx.$
Then $\int_S\omega=\pm 1$, depending on the choice of the orientation of the manifold $S$.

In contrast, if we regard $S$ as $k$-rectifiable set oriented by the $\haus^k$-measurable function $\tau:S\rightarrow \{-1,1\}$, then there exist two $\haus^k$-measurable sets $S_+,S_-\subset S$ such that $\tau(x)=1$ for any $x\in S_+$, $\tau(x)=-1$ for any $x\in S_-$, and $\haus^k(S\setminus(S_+\cup S_-))=0$. Thus we have
 $$\int_S\langle\omega(x);\tau(x)\rangle d\haus^k(x)=\haus^k(S_+)-\haus^k(S_-)\,,$$
 a quantity that can be any real number in $[-1,1]$ depending on $\tau$.
\end{example} 
\begin{example}
Let $\{x_i\}_{i=1}^{+\infty}$ be any enumeration of $\mathbb Q\times\mathbb Q$, $\{l_i\}_{i=1}^{+\infty}$ any positive summable sequence, $\{\vartheta_i\}_{i=1}^{+\infty}$ any sequence in $[0,2\pi]$. Let 
$$S:=\bigcup_{i=1}^{+\infty}\Big\{x\in \R^2: x=x_i+l(\cos\vartheta_i,\sin\vartheta_i)^\top, l\in[0,l_i]\Big\}\,.$$
By construction, the set $S$ is a $1$-rectifiable dense subset of $\R^2$. Also, $S$ has finite (and in particular locally finite) $\haus^1$-measure, since $\haus^1(S)=\sum_{i=1}^{+\infty}l_i<+\infty$. Hence, for $\haus^1$-almost every $x\in S$ the vector space $\Tan(S,x)$ is well defined (see, e.g., \cite[Theorem 5.4.6]{Krantz}), and it necessarily coincides with $\{t (\cos\vartheta_i,\sin\vartheta_i)^\top,\,t\in \R\}$ for some $i(x)\in \N$. Finally, we can consider on $S$ the orientation
$$\tau(x):=\begin{cases}
(\cos\vartheta_{i(x)},\sin\vartheta_{i(x)})^\top& \text{ if }\Tan(S,x)\text{ is well defined}, \\
0&\text{ otherwise}.
\end{cases}$$
\end{example}

\subsection{The zero norm, a \textquotedblleft uniform\textquotedblright\ norm for differential forms}\label{subsec:norm}

From now on, we assume that $ E \subset \R^n $ is the closure of a bounded domain. 
\begin{definition}[Test forms]
We call $\testforms(E):=\left( \mathscr C^0(E,\Lambda^k),\|\cdot\|_0\right)$ the space of \emph{test forms}, which is the set of continuous differential forms of order $k$ endowed with the \emph{zero norm} $\|\cdot\|_0$, defined as
\begin{equation}\label{0normdef}
\|\omega\|_0:=\sup\frac{1}{\haus^k(S)}\left|\int_S\langle\omega(x);\tau(x)\rangle d\haus^k(x)\right|\,,
\end{equation}
where the supremum is sought among all $k$-rectifiable $S\subset E$ with orientation $\tau$ such that $ 0<\haus^k(S) < + \infty$.
\end{definition}
The quantity \eqref{0normdef} is well defined in view of Remark \ref{rem:integrationofcontinuous}, and such a norm makes $\testforms(E)$ a Banach space. Moreover, we point out that the absolute value appearing in \eqref{0normdef} can be removed, as  $-\tau$ is an orientation of $S$ whenever $\tau$ is. We decide to keep such absolute value to stress the positivity of the right hand side of \eqref{0normdef}.
\begin{remark}
The zero appearing in the notation $\testforms(E)$ stands for the continuity of the elements of the space, not for the compactness of their support.
Also, our definition of test forms differs from that commonly used in calculus of variation, as in the present work $E$ is always the closure of a bounded domain, and thus compact. Notice that, if $E$ were an open set, the space of test forms commonly considered in the literature would be defined as the locally convex topological vector space arising as the strict inductive limit of certain Frech\'et spaces (see, e.g., \cite[\S 2.6]{Sc99} or \cite[\S 2]{Bo17}); this procedure does not lead in general to the construction of a Banach space.   
\end{remark}
Our interest in the zero norm has a twofold motivation. Indeed, on the one hand, the zero norm 
is well-suited to study integral functionals acting on differential forms \cite{Harrison}, as those we will use as sampling operators later on. On the other hand, this choice allows to relate our study with the literature \cite{AlonsoRapettiLebesgue}, bridging approximation theory with tools of geometric measure theory.

Despite the integration appearing in \eqref{0normdef}, the norm $\|\cdot\|_0$ should be regarded as an appropriate generalization of the uniform norm to the space of differential forms, as the following example shows.

\begin{example} \label{ex:norms}
	When $ k = 0 $, $ \haus^k $ is the counting measure and any oriented $k$-rectifiable set $ S $ of finite $\haus^k$-measure is a finite collection of points $ \{ \xi_i \}_{i=1}^M $ in $ E $ with orientation $ \tau_i = \pm 1 $ for $ i = 1, \ldots, M $. Further, since $ \Lambda^0 = \R $, $ \omega \in \mathscr{D}_0^0 (E) $ is a real-valued continuous function. One has
	$$ \|\omega\|_0 =\sup\left\{\frac 1{M} \left| \sum_{i=1}^M \tau_i \omega (\xi_i) \right|, \ M \in \N, \ \xi_i \in E, \ \tau_i = \pm 1 \right\}\, = \max_{\xi \in E} \left| \omega (\xi) \right| .$$
	When $ k = n $, $ \haus^k $ coincides with the $n$-dimensional Lebesgue measure, and one represents $ \omega \in \mathscr D_0^n(E)$ as $ \omega = f \de x_1 \wedge \ldots \wedge \de x_n $ for some $ f \in \mathscr{C}^0 (E) $. Then the estimate
\begin{align*}
&\left|\int_S \langle\omega(x);\tau(x)\rangle d\haus^n(x)\right|=\left|\int_S f(x) \det[\langle dx_i;\tau_j(x)\rangle]_{i,j=1,\dots,n} d\haus^n(x)\right|\\
\leq& \int_S |f(x)| |\det[\langle dx_i;\tau_j(x)\rangle]|_{i,j=1,\dots,n} d\haus^n(x)
\leq \max_{x\in E}|f(x)|\haus^n(S)\,,
\end{align*}
holds for any $k$-rectifiable set $S$ with orientation $\tau=\tau_1\wedge\dots\wedge\tau_n$. Hence 
$$\|\omega\|_0\leq \max_{x\in E}|f(x)|=\max_{x\in E}|\omega(x)|^*\,,$$
since all $n$-vectors are simple in $\R^n$.

Conversely, if $\bar x\in E$ is such that $|f(\bar x)|=\max_{x\in E}|f(x)|$, then for any $\varepsilon>0$ small enough, we can consider $S_\varepsilon:=B(\bar x,\varepsilon)\cap E$ and $\tau(x)\equiv\sgn f(\bar x) e_1 \wedge \ldots \wedge e_n $ and notice that
$$\lim_{\varepsilon\to 0^+}\frac 1 {\haus^k(S_\varepsilon)}\int_{S_\varepsilon}\langle\omega(x);\tau(x)\rangle d\haus^n(x)=|f(\bar x)|\,.$$
%
%
%
\end{example}

For the intermediate cases $ 0 < k < n $ we may still regard $ \Vert \cdot \Vert_0 $ as an extension to differential forms of the uniform norm. Precisely, under the regularity assumptions that we made on $E$, one has
\begin{equation}
\label{normequivalence2}
\|\omega\|_0=  \max_{x\in E}|\omega(x)|^*.
\end{equation}
Note that \eqref{normequivalence2}, together with the comparability of $|\cdot|^*$ with $|\cdot|_{\Lambda^k}$ stated in \eqref{eq:euclideanvscomass}, implies the topological equivalence of the zero norm with the uniform norm of the point-wise Euclidean norm:
$$\frac{1}{\sqrt{\binom{n}{k}}}\max_{x\in E}|\omega(x)|_{\Lambda^k}\leq \|\omega\|_0\leq \max_{x\in E}|\omega(x)|_{\Lambda^k},\;\;\forall \omega\in \testforms(E)\,.$$
\begin{remark} \label{rmk:simplicialapproximation}
Under the assumptions of our framework, working by simplicial approximation, it is possible to show that
\begin{equation}
\label{normequivalence}
\|\omega\|_0=\sup_{S\in \mathcal S^k(E)}\frac 1{\haus^k(S)} \left| \int_S \omega \right|,
\end{equation}
where $ \mathcal S^k(E):=\left\{\text{all }k\text{-dimensional oriented simplices lying in }E \right\} $. Further, it is also possible to consider simplicial chains supported on $ E $, as done in \cite{Harrison} and exploited in \cite{AlonsoRapettiLebesgue} to define a Lebesgue constant.
\end{remark}
\begin{remark}
We point out that the equivalent characterizations stated in  \eqref{normequivalence2} and  \eqref{normequivalence} above strongly depend on the assumption we made on the compact set $E$, namely $E$ is the closure of a bounded domain. Indeed, these characterization may fail for more general compact sets. For, simply consider $E:=(\{0\}\times [-1,1]) \cup ([-1,1]\times \{0\})$ and $\omega:=y(1-x^2) dx$
and note that $\|\omega\|_0=0$ (since $\omega$ has vanishing integral on any subset of $E$, while $\max_{x\in E}|\omega(x)|^*\geq |\langle (1,0),\omega(0,1)\rangle|=1$ . 
\end{remark}

}

\subsection{Sampling differential forms by currents of order zero}\label{subsec:currents}
In the next sections we will set up and analyze certain approximation schemes for differential forms in $\testforms(E)$ based on generalized interpolation or discrete least squares fitting. In order to ensure well-posedness of such procedures, we clearly need to consider sampling strategies that arise from linear continuous functionals acting on $\testforms(E)$. \emph{Currents of order zero} (and dimension $k$) are the elements of the dual space $ (\testforms(E))' $ of $ (\testforms(E), \Vert \cdot \Vert_0) $.
Due to the duality relationship, $ (\testforms(E))' $ is naturally endowed with the operator norm
\begin{equation*}\label{eq:mass}
M(T):=\sup\{|T(\omega)|, \; \omega\in \mathscr D_0^k(E),\;\|\omega\|_0\leq 1\},
\end{equation*}
which is referred to as the \emph{mass} of the current $T$. 

As pointed out by Klimek, \textquotedblleft Currents of order zero may be regarded as differential forms with measure coefficients\textquotedblright\ \cite{Kl91}. Indeed, by Riesz Representation Theorem, for any element $T$ of $(\testforms(E))'$ there exist a Borel regular\revised{ finite }measure $\mu$ and a $k$-vector field $\tau \in L^\infty_\mu(E)$, with $|\tau|_{\Lambda_k}=1$ $\mu$-a.e., such that 
\begin{equation}
    T(\omega)=\int \langle \omega(x); \tau(x) \rangle d\mu(x),\;\;\forall \omega\in \testforms(E)\,.
\end{equation}
The integration over a smooth embedded oriented $k$-submanifold of $\interior E$ with finite $k$-dimensional measure defines a current of order zero and dimension $k$. Nevertheless, smoothness is not really playing a relevant role in such a construction, and one may work with less regular objects, namely oriented $k$-rectifiable sets of finite $\haus^k$ measure.

We are in fact interested in two relevant subclasses of $ (\testforms(E))' $ defined via \eqref{eq:defineintegration}. The first one is exactly that of \emph{currents of integration} $\mathscr I^k(E)$, i.e., any current $[S,\tau] \in (\testforms(E))' $ defined by integration over an $\haus^k$-measurable $k$-rectifiable oriented subset $S \subset E $:
\begin{equation}\label{integralcurrent}
[S,\tau](\omega) :=\revised{\int_S\langle\omega(x);\tau(x)\rangle d\haus^k(x)}\,.
\end{equation}

The second subclass of $ (\testforms(E))' $ we consider here is the one of \emph{currents of integral averaging} $\averaging(E) $, or \emph{averaging currents} for short.
\revised{
\begin{definition}[Averaging currents]
The set of $\averaging(E) $ \emph{averaging currents} is the subset of all $T\in (\testforms(E))'$ such that there exists a $k$-rectifiable set $S$ of positive finite measure and an orientation $\tau $ of $S$ such that, for any $\omega\in \testforms(E)$, one has
\begin{equation}\label{integralaverage}
T(\omega):=\frac{[S,\tau](\omega)}{\haus^k(S)}=\frac 1 {\haus^k(S)}\int_S\langle\omega(x);\tau(x)\rangle d\haus^k(x) \, .
\end{equation} 
\end{definition}}
Note that in particular one has $M([S,\tau])=\haus^k(S)$, and, if $T\in \averaging(E)$, then $M(T)=1$ by construction. 
Due to this mass bound, any countable set $ \mathcal{T} \subset \averaging (E) $ (possibly right-padded by zero currents if it is finite) can be identified with a linear bounded operator
$$ \mathcal{T}: \testforms (E) \rightarrow \ell^1_w (\N),\;\;\;\mathcal T (\omega)=\{T_i(\omega)\}_{i=1}^{+\infty}\, .$$
In the above equation, $ \ell^1_w (\N) $ is the space real sequences $ y $ such that 
\revised{$$ \Vert y \Vert_{\ell^1_w} := \sum_{i=1}^{+\infty} | y_i | w_i < + \infty\,.$$}
In Section \ref{sec:3}, we will consider families of subsets $ \mathcal{T}^{(\varepsilon)} \subset \averaging (E) $ and define their convergence towards $ \mathcal{T} $ relying on their identification with linear bounded operators mapping $ \testforms(E) $ to $ \ell^1_w (\N) $. In particular, we say that $\mathcal T^{(\varepsilon)}\subset \averaging(E)$ is \emph{point-wise converging} to $\mathcal T\subset \averaging(E)$ if 
\begin{equation}\label{eq:pointwiseconvergence}
\lim_{\varepsilon\to 0}\sum_{i=1}^{+\infty}|T_i(\omega)-T_i^{(\varepsilon)}(\omega)|w_i=0,\;\;\forall \omega\in \testforms(E)\,,
\end{equation}
while we say that $\mathcal T^{(\varepsilon)}\subset \averaging(E)$ is \emph{uniformly converging} to $\mathcal T\subset \averaging(E)$ if 
\begin{equation}\label{eq:uniformconvergence}
\lim_{\varepsilon\to 0} \sup_{\Vert \omega \Vert_0 = 1 } \left( \sum_{i=1}^{+\infty}|T_i(\omega)-T_i^{(\varepsilon)}(\omega)|w_i \right) = 0\,.
\end{equation}

\subsection{The interpolation operator $ \Pi $ and the fitting operator $ P $} \label{sect:problems}
Let $ \spaceV (E) $ be a $N$-dimensional linear subspace of $ \testforms(E) $.  A\revised{ finite or countable }set $\mathcal T:=\{T_i\}\subset (\testforms(E))' $ is termed \emph{determining} for $\spaceV (E) $ whenever \revised{
\begin{equation} \label{eq:determining}
T_i(\omega)=0, \quad \forall i=1,\dots,\Card \mathcal T \qquad \text{implies that} \qquad \omega=0 .
\end{equation}
Necessarily we have $ \Card \mathcal T\geq N $. For, recall that, if $M:=\Card \mathcal T<+\infty$, \eqref{eq:determining} is equivalent to the nullity of the right kernel of the $M$ by $N$ generalized Vandermonde matrix $\vdm(\mathcal V,\mathcal T):=T_i(v_j)$ relative to a basis $v_1,\dots,v_N$ of $\spaceV(E)$.}

\revised{\begin{remark}
Unisolvence depends both on the space $ \spaceV(E) $ and the set $ \mathcal T$; related results and strategies may sensibly vary from case to case. In the case of polynomial differential forms, this can be appreciated by comparing moments-based techniques (used, e.g., in \cite{ArnoldFalkWintherActa}) with weights-based techniques (developed, e.g., in \cite{BruniThesis} for the first family of polynomial differential forms and in \cite{BruniZampa} for the second one).
\end{remark}}

 Now, if \eqref{eq:determining} holds and $ M = N $, the set $\mathcal T$ is said to be \emph{unisolvent} for $\spaceV (E)$,\revised{ and $\vdm(\mathcal V,\mathcal T)$ is invertible for any basis $\mathcal V$ of $ \spaceV (E) $. }In such a case,  we may generalize the idea of \cite{ABRCalcolo} and define an interpolation operator $ \Pi: \testforms(E) \to \spaceV(E) $ by asking that
\begin{equation} \label{eq:interpolationconditions}
	T_i (\omega) = T_i (\Pi \omega), \quad i = 1, \ldots, M.
\end{equation}

\revised{Given a countable $ \spaceV (E) $-determining set of currents $\mathcal T:=\{T_i\}$ and a summable sequence of positive weights $w=\{ w_i\} $ we introduce the scalar product $(\cdot \, , \cdot)_{\mathcal T,w}$ on $\spaceV(E)$ by setting 
\begin{equation}\label{scalarproduct}
(\omega,\eta)_{\mathcal T,w}:=\sum_{i=1}^{+\infty}w_iT_i(\omega)T_i(\eta)\,.
\end{equation}}
Note that $(\cdot \, , \cdot)_{\mathcal T,w}$ defines a non-negative symmetric continuous bilinear form on $\testforms(E)$,\revised{ which is in particular a scalar product on $ \spaceV (E) $ (since $\mathcal T$ is determining for such a space) and thus induces a norm on $\spaceV(E)$ that we denote by $\|\cdot\|_{\mathcal T,w}.$ }
Hence, the weighted discrete least squares projector $P:\testforms(E)\rightarrow\spaceV (E) $, 
\begin{equation}\label{discreteleastsquares}
P\omega:= \argmin_{\eta \in \spaceV (E)}\left\{\|\omega-\eta\|_{\mathcal T,w}^2 := (\omega - \eta, \omega - \eta)_{\mathcal T,w} \right\},
\end{equation} 
is well-defined. 

\revised{
\begin{remark}[The finite case $\Card \mathcal T<+\infty$]
Consider a finite $\spaceV(E)$-determining set $\widetilde{\mathcal T}=\{\widetilde T_1,\dots,\widetilde T_M\}\subset \averaging(E)$ and a finite sequence of weights $\widetilde w=\{\widetilde w_1,\dots, \widetilde w_M\}.$ We can embed such a case in our framework simply padding $\widetilde T$ with zero currents, i.e. defining $\mathcal T=\{T_i\}_{i=1}^{+\infty},$ $T_i=\widetilde T_i$ if $i\in\{1,\dots,M\}$, $T_i=0$ for $i>M$, and extending $\widetilde w$ with any positive summable sequence. Hence, the case $ M < + \infty $ can be very often treated as a particular case of the infinite one.
\end{remark}


Using the scalar product defined in \eqref{scalarproduct}, we can compute an orthonormal basis $\{\eta_1,\dots,\eta_N\}$ of $\spaceV (E) $ using,\revised{ e.g., spectral decomposition of the associated Gram matrix with respect to any given basis. } 
Then we can write
\begin{equation} \label{eq:samplingProjector}
P\omega=\sum_{h=1}^N(\omega,\eta_h)_{\mathcal T,w}\eta_h=\sum_{h=1}^N\sum_{i=1}^{+\infty}w_iT_i(\omega)T_i(\eta_h)\eta_h\,,\quad (\eta_i,\eta_j)_{\mathcal T,w}=\delta_{i,j}.
\end{equation}

\revised{\begin{remark}[The case $\Card \mathcal T=N$]
When $ M:=\Card \mathcal T = N $, the set $ \mathcal{T} $ is unisolvent and the mismatch in \eqref{discreteleastsquares} vanishes. Further, when all the weights $ w_i \equiv 1 $, the orthonormal basis $\{\eta_1,\dots,\eta_N\}$ of $\spaceV(E)$ with respect to $(\cdot\,,\cdot)_{\mathcal T,w}$ is uniquely determined up to permutations of its elements. It is precisely the Lagrange basis $\{ \omega_1, \ldots, \omega_N \} $ relative to $\{T_1,\dots,T_N\}$, i.e. it satisfies $ T_i (\omega_j) = \delta_{i,j} $. In such a case $\Pi \omega$ can be conveniently represented in the Lagrange form 
\begin{equation} \label{eq:LagrangeRepresentationWF}
	\Pi \omega = \sum_{i=1}^N T_i (\omega) \omega_i\,,\quad T_i(\omega_j)=\delta_{i,j}\,.
\end{equation}
Notice also that, if we change the weights $w_1,\dots,w_N$, we obtain a different scaling of the orthonormal basis (namely $\eta_i=\omega_i/\sqrt{w_i}$), but the operator $\Pi$ remains the same. 
\end{remark}}
}


\begin{example}[Orthonormality of Whitney forms]
Whitney forms of lowest degree are an established tool in computational electromagnetism \cite[\S 5.2.2]{Bossavit}, due to their geometrical flavor \cite{DodziukCharacterization}. Their formal definition is as follows.

Let $ E \subset \R^n $ be an $n$-simplex spanned by vertices $ \{ v_0, \ldots, v_n \} $. Let $ \{ \lambda_0, \ldots, \lambda_n \} $ be the corresponding barycentric coordinates. Let $\alpha$ be a increasing multiindex of length $k$ and $ E_\alpha := \{ v_{\alpha(0)}, \ldots, v_{\alpha(k)} \} $, with orientation $ \tau_\alpha $ as in \cite[p. 152]{Whitney}. The Whitney $k$-form associated with a $k$-face $ E_\alpha $ of $ E $ is
\begin{equation*} \label{eq:WFdef}
    \omega_\alpha := \sum_{i=0}^k (-1)^i \lambda_{\alpha(i)} \de \lambda_{\alpha(0)} \wedge \ldots \wedge {\de \lambda}_{\alpha(i-1)} \wedge  {\de \lambda}_{\alpha(i+1)} \wedge \ldots \wedge \de \lambda_{\alpha(k)} .
\end{equation*}
One may define the space of Whitney forms
$$ \P_1^- \Lambda^k (E) = \Span \left\{ \omega_\alpha, \, E_\alpha \text{ is a $k$-face of $E$}\right\}$$
and, letting $ T_\beta $ denote the averaging current \eqref{integralaverage} with respect to $ E_\beta $,
$$ [E_\beta, \tau_\beta] (\omega_\alpha) = \haus^k (E_\beta) T_{E_\beta} (\omega_\alpha) = \int_{E_\beta} \omega_\alpha = \haus^k (E_\beta) \delta_{\alpha,\beta} .$$
The basis $ \left\{ \omega_\alpha, \, E_\alpha \text{ is a $k$-face of $E$}\right\} $ of Whitney $k$-forms is orthonormal with respect to the scalar product \eqref{scalarproduct} induced by the $k$-faces $ E_\alpha $ of $ E $ with weights $ w_\theta = 1 $ for each $ \theta $. Indeed:
$$ (\omega_\alpha, \omega_\beta)_{\mathcal{T},w} = \sum_{\theta = 1}^M w_\theta T_\theta (\omega_\alpha) T_\theta (\omega_\beta) = \delta_{\alpha, \beta} .$$
\end{example}

\subsection{Riesz representers and reproducing kernels} In what follows we shall make frequent use of reproducing kernels. Such a well-established theory \cite{ReproducingKernel} gives in turn a more manageable notation. Since $(\spaceV (E),(\cdot,\cdot)_{\mathcal T,w})$ 
is a $N$-dimensional Hilbert space, any linear continuous functional $T$ over $\spaceV(E)$ (and in particular any any zero order current $T$, due to continuous embedding of $\spaceV(E) $ in the Banach space $\mathscr D^k_0$)  admits a \emph{\Riz representer} $K_T$, which is readily computed to be
$$ K_T := \sum_{h=1} T(\eta_h) \eta_h ,$$
where $ \{ \eta_1,\dots,\eta_N \} $ is any orthonormal basis of $\spaceV (E)$. Indeed, for any $\omega\in \spaceV (E) $, we can write
$$T(\omega)=T\left(\sum_{h=1}^N(\omega,\eta_h)_{\mathcal T,w}\eta_h\right)= \sum_{h=1}^N(\omega,\eta_h)_{\mathcal T,w}T(\eta_h)=\left(\omega,\sum_{h=1}^NT(\eta_h)\eta_h\right)_{\mathcal T,w}= (\omega,K_T)_{\mathcal T,w}.$$
In this language, we can rephrase   \eqref{eq:defconstantM} by means of \Riz representers, finding
\begin{equation}\label{eq:rephrasingL}
\L(\mathcal T, \spaceV(E))=\sup_{T\in \averaging(E)}\sum_{i=1}^{+\infty}|T_i(K_T)|w_i=\sup_{T\in \averaging(E)}\sum_{i=1}^{+\infty}|T(K_{T_i})|w_i\,.
\end{equation}

The idea of \Riz representers can be extended also to a linear continuous automorphism $A: \spaceV(E)\rightarrow \spaceV(E)$. To this end, we notice that, for any fixed $ \omega \in \spaceV (E) $, the linear map $ (\cdot,\omega)_{\mathcal T,w} $ can be extended to a linear map $ \spaceV (E) \otimes \spaceV (E) \rightarrow \spaceV (E) $ by setting $ (\eta \otimes \vartheta, \omega)_{\mathcal T, w} := \eta \cdot (\vartheta, \omega)_{\mathcal T, w} $. We can then introduce
$K_A(x,y):=\sum_{h=1}^N A(\eta_h)(x)\otimes \eta_h(y)$ so that
\begin{equation*}\label{eq:representerL}
(K_A(x,y),\omega(y))_{\mathcal T,w}=(A\omega)(x),\;\;\forall \omega\in \spaceV(E),\;\forall x\in E\,.
\end{equation*}
In particular, the identity $ \mathbb{I} $ of $\spaceV(E)$ is represented by the \emph{Reproducing kernel} $ K := K_{\mathbb I} = \sum_{h=1}^N\eta_h(x)\otimes \eta_h(y)$, which is the unique element of $ \spaceV(E) \otimes \spaceV(E) $ such that $ \omega(x)=(K(x,y),\omega(y))_{\mathcal T,w} $ for each $ \omega\in \spaceV $. The reproducing kernel $K$ also allows for an useful equivalent definition of $P$. Indeed the orthogonality of the projection $P$ directly implies
\begin{equation}\label{eq:Pbyscalarproduct}
(P\omega)(x)=(K(x,y),\omega(y))_{\mathcal T,w},\;\;\forall \omega\in \testforms(E),\;\forall x\in E\,.
\end{equation}

\subsection{Equivalent definitions of the Lebesgue constant} \label{sect:MtoN}

In the introduction, we both termed Lebesgue constant two apparently different quantities (compare \eqref{eq:defconstantM} with \eqref{eq:defconstantL}). This is motivated by the following fact. Suppose that $ N = M < +\infty $, and consider the Lagrange basis $ \{ \omega_1, \ldots, \omega_N \} $. Computing the scalar product \eqref{scalarproduct} of two elements of such a set, one finds
$$ (\omega_j, \omega_h)_{\mathcal{T},w} = \delta_{i,j} \delta_{i,h} w_h = \delta_{j,h} w_h ,$$
whence $ \eta_h := \omega_h / \sqrt{w_h} $ is an orthonormal basis for $ \spaceV (E) $. Plugging this into   \eqref{eq:defconstantM}, one finds
\begin{align*}
    \sup_{T\in \averaging(E)} \sum_{i=1}^N \left| \sum_{h=1}^N T_i(\eta_h)T(\eta_h)\right|w_i & = \sup_{T\in \averaging(E)} \sum_{i=1}^N \left| \sum_{h=1}^N \frac{1}{w_h} \delta_{i,h} T(\omega_h) \right| w_i \\ & = \sup_{T\in \averaging(E)} \sum_{i=1}^N \left| T(\omega_h) \right| ,
\end{align*}
which is   \eqref{eq:defconstantL}, and represents the counterpart of the more frequent Lebesgue constant associated with an interpolation operator. The quantity introduced in   \eqref{eq:defconstantM} is indeed a very natural generalization of objects that are widely diffused in the literature, from the classical Lebesgue constant in nodal interpolation \cite{BrutmanSINUM} and fitting \cite{Reichel} to the generalized Lebesgue constant proposed in the simplicial case for Whitney forms \cite{AlonsoRapettiLebesgue} and histopolation \cite{BruniErb23}. We now scrutinize such relationships.

Suppose first that $ k = 0 $, so that elements of $ \averaging (E) $ are of the form $ T (\omega) = \frac{1}{L} \sum_{l=1}^L \omega (\xi_l) \tau_l $, where $ \xi_l \in E $ and $ \tau_l = \pm 1 $ for any $ l = 1, \ldots, L $. In the interpolatory case, due to the obvious set inclusion of nodal evaluations in $ \mathscr{A}^0 (E) $, one has
$$ \L(\mathcal T,\mathscr V^0(E)) = \sup_{T\in \mathscr{A}^0 (E)} \sum_{i=1}^N \left|  T(\omega_i)\right| \geq \sup_{\xi \in E} \sum_{i=1}^N | \omega_i (\xi) | = \max_{\xi \in E} \sum_{i=1}^N | \omega_i (\xi) | =: \mathrm{Leb}^0 (E) ,$$

In the above equation, $ \mathrm{Leb}^0 (E) $ is the usual nodal Lebesgue constant, i.e. the (supremum of) the sum of the modulus of the cardinal functions. On the other hand, for any $\varepsilon>0$, we can pick $T^\varepsilon\in \mathscr A^0(E)$, with $ T^\varepsilon (\omega) = \frac{1}{L} \sum_{l=1}^L \omega (\xi_l^\varepsilon) \tau_l^\varepsilon$, such that $\L(\mathcal T,\mathscr V^0(E))-\varepsilon\leq \sum_{i=1}^N|T^\varepsilon(\omega_i)|$. We thus obtain the converse inequality
$$ \L(\mathcal T,\mathscr V^0(E))-\varepsilon\leq \sum_{i=1}^N \left|\frac{1}{L} \sum_{l=1}^L \omega (\xi_l^\varepsilon) \tau_l^\varepsilon \right|\leq \sum_{l=1}^L \frac{1}{L}\sum_{i=1}^N \left|\omega_i(\xi^\varepsilon_l) \right|\leq \mathrm{Leb}^0 (E) .$$
This shows that, when $ k = 0 $, the quantity $ \L(\mathcal T,\mathscr V^0(E)) $ defined in \eqref{eq:defconstantL} is in fact the nodal Lebesgue constant $ \mathrm{Leb}^0 (E) $ used in classical interpolation of functions.
In the fitting case, when performing least squares approximation with respect the countable subset $\mathcal T=\{T_i\}_{i\in \N}$ of $\mathscr A^0(E)$ and the positive summable sequence $\{w_i\}_{i\in \N}$, the same relationship holds. The usual Lebesgue constant for least squares fitting  of the functionals $T_i := \omega (\xi_i) $'s with weights $w_i$'s is given by
$$ \mathrm{Leb}^0 (E) := \max_{\xi \in E} \sum_{i=1}^{+\infty} \left| \sum_{h=1}^N \eta_h (\xi_i) \eta_h (\xi) \right| w_i = \max_{\xi\in E}\sum_{i=1}^{+\infty} \left|\sum_{h=1}^NT_i(\eta_h)\eta_h(\xi) \right|w_i\,,$$
where $\{\eta_1,\dots,\eta_N\}$ is any orthonormal basis of $\mathscr V^0(E)$ with respect to $(\cdot,\cdot)_{\mathcal T,w}.$ Applying the same argument as above, one can replace in the definition of $\mathrm{Leb}^0 (E)$ the maximum over point evaluations by the supremum over $\mathscr A^0(E)$. Hence, for $k=0$, our definition of Lebesgue constant given   \eqref{eq:defconstantM} coincides with $\mathrm{Leb}^0 (E) $ also for the fitting case.

Moving to the case $ k > 0 $, in \cite{AlonsoRapettiLebesgue}\revised{ the }authors propose the quantity
$$ \mathrm{Leb}^k (E) := \sup_{S \in \mathcal{C}^k (E)} \frac{1}{\haus^k(S)}\sum_{i=1}^N \haus^k(S_i) \left| \int_S \widetilde{\omega}_i \right| ,$$
where $ \mathcal{C}^k (E) $ is the set of simplicial $k$-chains supported in $ E $, i.e. formal finite sums of $k$-simplices, $ \{ S_1, \ldots, S_N \} $ is a collection of oriented simplices and $ \{ \widetilde{\omega}_1, \ldots, \widetilde{\omega}_N \} $ satisfy the duality relationship $ \int_{S_i} \widetilde{\omega}_j = \delta_{i,j} $. Again, the Lebesgue constant proposed in   \eqref{eq:defconstantL} is consistent with $ \mathrm{Leb}^k (E) $:
\begin{align*}
        \L ( \mathcal{T}, \spaceV (E) ) & = \sup_{T\in \averaging(E)}\sum_{i=1}^N|T(\omega_i)| \geq  
 \sup_{S \in \mathcal{C}^k (E)} \sum_{i=1}^N \left| T_S (\omega_i) \right| \\ &
 = \sup_{S \in \mathcal{C}^k (E)} \frac{1}{\haus^k(S)}\sum_{i=1}^N \left| [S,\tau] (\omega_i) \right| \\ & = \sup_{S \in \mathcal{C}^k (E)} \frac{1}{\haus^k(S)}\sum_{i=1}^N \haus^k(S_i) \left| [S,\tau] \left(\frac{\omega_i}{\haus^k(S_i)}\right) \right| \\ &
        = \sup_{S \in \mathcal{C}^k (E)} \frac{1}{\haus^k(S)}\sum_{i=1}^N \haus^k(S_i) \left| \int_S \widetilde{\omega}_i \right| .
    \end{align*}
The converse inequality depends on the characterization of the norm $\Vert \cdot \Vert_0 $ pointed out in Remark \ref{rmk:simplicialapproximation} combined with   \eqref{normequivalence2}. In particular, under the hypothesis of the present work, the quantity $ \L ( \mathcal{T}, \spaceV (E) ) $ introduced in   \eqref{eq:defconstantL} and the quantity $ \mathrm{Leb}^k (E) $ defined in \cite{AlonsoRapettiLebesgue} coincide.

\subsection{Pushforwarding currents and projection operators} \label{sect:pushforward}

\revised{In order to work on a reference element and then extend results on \textquotedblleft physical\textquotedblright\ elements, it is fundamental to understand how the projection operators behave under the action of a sufficiently regular mapping. This is dual to understanding the \emph{pushforward} of the corresponding currents; of course, this depends on the specific class of linear functionals. From this perspective, integral and averaging currents have a remarkably different behavior, being only the former class invariant under the action of a diffeomorphism.}

Recall that any $\mathscr C^1$-map $ \varphi: \widehat{E}\rightarrow E := \varphi (\widehat E) $ induces a \emph{pullback} $ \varphi^* : \testforms (E)\rightarrow  \testforms ( \widehat{E} ) $, which acts as 
$$ \langle \varphi^* \omega(\widehat x); \widehat \tau_1\wedge\dots\wedge \widehat \tau_k\rangle = \langle\omega(\varphi(\widehat x); D\varphi(\widehat x) \widehat\tau_1\wedge\dots\wedge D\varphi(\widehat x) \widehat \tau_k\rangle, \;\;\forall \widehat\tau_1,\dots,\widehat\tau_k\in \R^n\,. $$
Then the \emph{pushforward} of currents in $(\testforms(\widehat E))'$ is defined similarly (see e.g., \cite[\S 7.4.2]{Krantz}) by setting
\begin{equation*}
\varphi_*\widehat T(\omega)=\widehat T(\varphi^*\omega),\;\;\forall \omega\in \testforms(E)\,.
\end{equation*}

Given a $N$-dimensional subspace $\spaceV(\widehat E)$ we will consider the following subspace of $\testforms(E)$: 
$$\spaceV(E):=\{\omega\in \testforms(E):\,\varphi^*\omega\in \spaceV(\widehat E)\}\,.$$
Note that, if $\varphi=\psi^{-1}$ is a $\mathscr C^1$-diffeomorphism onto its image, then we can equivalently characterize $\spaceV(E)$ as $\psi^*\spaceV(\widehat E)$, i.e.,  the image of $\spaceV(\widehat E)$ under $\psi^*$.
From now on we will always assume such an hypothesis, hence we will also have
$$\dim \spaceV (E) = \dim \spaceV (\widehat E) =  N.$$
\revised{Let $\widehat{\mathcal T}:=\{\widehat T_i\}_{i=1}^{+\infty}$ be a $\spaceV(\widehat E)$-determining subset of $\averaging(\widehat E)$, and let $\widehat w\in \ell^1(\N)$ be a positive sequence, so that $(\spaceV(\widehat E),(\cdot,\cdot)_{\widehat{\mathcal T},\widehat w})$ is a finite dimensional Hilbert space (see   \eqref{scalarproduct} above)}. Since we constructed $\spaceV(E)$ as an isomorphic copy of $\spaceV(\widehat E)$, it is rather natural to endow $\spaceV(E)$ with a scalar product that turns the above mentioned isomorphism into an isometry. Namely we set 
\begin{equation}\label{scalarproductpushforward}
(\omega,\eta)_{\mathcal T, w}:=(\varphi^*\omega,\varphi^*\eta)_{\widehat{\mathcal T},\widehat w}\,\;\;\forall \omega,\eta\in \testforms(E)\,.
\end{equation}
We warn the reader that this definition is not characterizing a unique choice of $\mathcal T\subset \averaging(E)$ and \revised{summable } positive weights $w$ such that the notation used in the left hand side of \eqref{scalarproductpushforward} is consistent with the one of \eqref{scalarproduct}. However, we show in   \eqref{newproduct} below that there exists a very natural choice of such $\mathcal T\subset \averaging(E)$ and \revised{summable } positive weights $w$, and we take that as a definition. We first show that the classes $\mathscr I^k$ and $\averaging$ behave slightly differently under the pushforward operation, and postpone the explicit computation of $\mathcal T$ and $w$ to equation \eqref{newproduct} below.

\begin{remark}[Averaging currents are not pushforward invariant] \label{rmk:pullbackaveraged}
If $E,\widehat E,$ $\varphi$ and $\psi$ are as above, and $\widehat S\subset \widehat E$ is a $k$-rectifiable set of finite $\haus^k$ measure and orientation $ \widehat \tau $, then $ S:=\varphi(\widehat S) $ is $k$-rectifiable, it has finite $\haus^k$ measure, and inherits the pushforward orientation $ \tau := D \varphi \widehat \tau / \Vert D \varphi \widehat \tau \Vert_{\Lambda_k} $ from $ \widehat S $ via $ \varphi $ ($ \Vert D \varphi \widehat \tau \Vert_{\Lambda_k} $ denotes the norm $ | \cdot |_{\Lambda_k}$ point-wisely considered). Thus one can write
\begin{equation*} \label{eq:pullback}
    [S,\tau](\omega)=[\varphi(\widehat S),\tau](\omega)=\int_{\varphi (\widehat S)} \omega = \int_{\widehat S} \varphi^* \omega=[\widehat S,\widehat \tau](\varphi^*\omega)=\varphi_*[\widehat S, \widehat \tau](\omega) \quad \forall \omega \in \testforms( E)\,.
\end{equation*}
Hence $\varphi_*:\mathscr I^k(\widehat E)\rightarrow \mathscr I^k(E)$, i.e., the class $\mathscr I^k$ is stable under pushforward. 

In contrast, applying this to the averaging current $T_{\widehat S}\in \averaging(\widehat E)$, one finds 
$$ \varphi_* T_{\widehat S} (\omega)=T_{\widehat S}(\varphi^*\omega)=\frac{[\widehat S, \widehat \tau](\varphi^*\omega)}{\haus^k(\widehat S)}=\frac{[\varphi (\widehat S),\tau](\omega)}{\haus^k(\widehat S)}=\frac{\haus^k(S)}{\haus^k(\widehat S)}T_{S}(\omega)\,.$$
Hence $\varphi_*T_{\widehat S}\notin \averaging(E)$, unless $\haus^k(\varphi(S))=\haus^k(S).$
\end{remark}
Now, if $\widehat{\mathcal T}:=\{\widehat T_i \}_{i=1}^{+\infty}$ is a subset of $\averaging (\widehat E)$, then for every $ i \in \N $, there exist the $k$-rectifiable set $ \widehat S_i $ such that $\widehat T_i=[\widehat S_i, \widehat \tau_i](\haus^k(\widehat S_i))^{-1}$, where $ \widehat \tau_i $ is a suitable orientation. For any $ i \in \N$, we denote by $S_i$ the set $\varphi(\widehat S_i)$ (with the inherited orientation $ \tau_i $) and we compute
\begin{align*}
(\omega,\eta)_{\mathcal T, w}:=&(\varphi^*\omega,\varphi^*\eta)_{\widehat{\mathcal T},\widehat w}=\sum_{i=1}^{+\infty}\widehat w_i \widehat T_i(\varphi^* \omega)\widehat T_i(\varphi^* \eta)\sum_{i=1}^{+\infty} \widehat w_i  \varphi_*\widehat T_i(\omega)\varphi_*\widehat T_i( \eta)\\
=& \sum_{i=1}^{+\infty} \widehat w_i \left(\frac{\haus^k(S_i)}{\haus^k(\widehat S_i)}\right)^2 T_{S_i}(\omega)T_{S_i}( \eta)\,.
\end{align*}
Hence, if we set 
\begin{equation}\label{newproduct}
T_i:=\frac{\haus^k(\widehat S_i)}{\haus^k( S_i)}\varphi_* \widehat T_i,\;\;\;w_i:=\widehat w_i \left(\frac{\haus^k(S_i)}{\haus^k(\widehat S_i)}\right)^2\qquad \forall i\in \N \,,
\end{equation}
the notation we introduced in \eqref{scalarproductpushforward} is consistent with the one of \eqref{scalarproduct}.

If we consider the projection operator $P:\testforms(E)\rightarrow \spaceV(E)$ defined by \eqref{discreteleastsquares}, with $\mathcal T$ and $w$ as in \eqref{newproduct}, then we get by construction
\begin{equation*}
 P  \omega=\psi^*\widehat P\varphi^*\omega,\;\;\forall \omega\in \testforms( E)\,,
\end{equation*}
i.e., the diagram 
\begin{center}
\begin{tikzcd}
\testforms(\widehat{E}) \arrow[d, "\widehat P"] 
& \testforms(E) \arrow[d, "P" ] \arrow[l, "\varphi^*"] \\
 \spaceV(\widehat{E}) \arrow[r, "\psi^*"]
&  \spaceV(E)
\end{tikzcd}
\end{center}
commutes.

Clearly, we can give an equivalent operative definition of $P$ using orthonormal bases. Indeed, it follows from definition \eqref{scalarproductpushforward} that, given an orthonormal basis $\{\widehat\eta_1,\dots\widehat \eta_N\}$ of $(\spaceV(\widehat E),(\cdot,\cdot)_{\widehat{\mathcal T},\widehat w})$, the set
\begin{equation}\label{neworthogonalbasis}
\{\eta_1,\dots,\eta_N\}:=\{\psi^*\widehat\eta_1,\dots,\psi^*\widehat\eta_N\}
\end{equation}
forms an orthonormal basis of $(\spaceV(E),(\cdot,\cdot)_{\mathcal T,w})$, whence 
$ P\omega:=\sum_{h=1}^N(\omega,\eta_h)_{\mathcal T,w}\eta_h $, 
for $ \omega\in \testforms(E) $.

\begin{remark}
    In the interpolatory case $ \Card \mathcal T = N $ (hence $ M = \Card \mathcal T < + \infty $), 
$ \varphi_* $ preserves unisolvence as the pullback $ \varphi^* $ preserves cardinal bases associated with non-averaged currents. 
In fact, if $ \{ \widehat {\omega}_1, \ldots, \widehat{\omega}_N \} $ is the Lagrange basis associated with $ \{ \widehat{S}_1, \ldots, \widehat{S}_N \} $, 
one has:
$$ \delta_{i,j} = [\widehat S_i, \widehat \tau_i] \widehat \omega_j = \int_{\widehat {S}_i} \widehat \omega_j = \int_{\psi (S_i)} \widehat \omega_j = \int_{S_i} \psi^* \widehat \omega_j = [S_i, \tau_i] \psi^* \widehat \omega_j =: [S_i, \tau_i] \omega_j \,,$$
being $ \tau_i := \varphi_* \widehat \tau_i / \Vert \varphi_* \widehat \tau_i \Vert_{\Lambda_k} $, see Remark \ref{rmk:pullbackaveraged}. In other words, the basis $ \{ \omega_1, \ldots, \omega_N \} $ is dual to the pushforward currents. In the averaged case, the above chain of equalities remains consistently true provided that the normalization factor of   \eqref{newproduct} is introduced.
%
%
\end{remark}


\begin{remark}
It is worth pointing out here that, although we introduced $(\spaceV(E),(\cdot,\cdot)_{\mathcal T,w})$ declaring it isometric to $(\spaceV(\widehat E),(\cdot,\cdot)_{\widehat{\mathcal T},\widehat w})$, this relation is not in general compatible with the Banach spaces $\testforms(E)$ and $\testforms(\widehat E)$ standing upon these finite dimensional subspaces. \revised{More in detail, exploiting a singular values decomposition, one sees that, for any $\omega\in \testforms(E)$ and any $\mathscr C^1$-diffeomorphism $\varphi: \widehat E\rightarrow E$, one has
\begin{equation}\label{eq:zeronormchange}
\frac 1{\Vert \prod_{j=n-k+1}^n (\sigma_j^{(\varphi)})^{-1}\Vert_{\mathscr C^0(\widehat E)}}\|\omega\|_0\leq \|\varphi^*\omega\|_0\leq \left\Vert \prod_{j=1}^k \sigma_j^{(\varphi)}\right\Vert_{\mathscr C^0(\widehat E)} \|\omega\|_0,
\end{equation}
where we denoted by $\sigma_j^{(\varphi)}:\widehat E\rightarrow (0,+\infty)$ the singular value functions of the differential $D\varphi$ of $\varphi$ arranged in a non-decreasing order, and by $\|\cdot\|_{\mathscr C^0(\widehat E)}$ the uniform norm of a continuous function over $\widehat E$. We here avoid a formal proof for this result, which is rather long and technical, and will follow from Section \ref{sect:changing} as an application of Lemma \ref{lem:fede}. Note also that \eqref{eq:zeronormchange} is sharp in the sense that, for any small enough $\varepsilon>0$, we can pick $\omega_\varepsilon\in \testforms(E)$ in order to make the right inequality hold as an equality up to $\varepsilon$, i.e. to obtain
\begin{align*}
&\left(\left\Vert \prod_{j=1}^k \sigma_j^{(\varphi)}\right\Vert_{\mathscr C^0(\widehat E)}-\varepsilon\right) \|\omega_\varepsilon\|_0\leq\|\varphi^*\omega_\varepsilon\|_0\leq \left\Vert \prod_{j=1}^k \sigma_j^{(\varphi)}\right\Vert_{\mathscr C^0(\widehat E)} \|\omega_\varepsilon\|_0\,.
\end{align*}
A similar choice can be made for the left inequality in \eqref{eq:zeronormchange}. }
\end{remark}

%

\section{Operator norm and Lebesgue constant}\label{sec:3}

\revised{The principal aim of this section is to prove the first main result of the paper:

\noindent \textbf{Theorem $\boldsymbol{1}$} (Characterization of the norm of the projection operator)\textbf{.}
\emph{Let $\mathcal T=\{T_1,T_2,\dots\}$  be a countable $\spaceV(E)$-determining subset of $\averaging(E)$ and $w\in \ell^1(\N)$. If $\haus^k(\support T_i \cap \support T_j)=0$ for $i\neq j$, then the discrete least squares projection $P$ satisfies
\begin{equation*}
\opnorm{P}= \L(\mathcal T,\spaceV (E))\,.
\end{equation*}
Inequality  $\opnorm{P}\leq\L(\mathcal T,\spaceV (E))$ still holds even if $\haus^k(\support T_i \cap \support T_j)>0$ for some $i\neq j$.}}

The proof of this result is carried in the following subsection, whereas its consequences are scrutinized in the subsequent Section \ref{sec:contL} and Section \ref{sec:BanachHilbert}. \revised{For ease of notation, in the rest of the section we will omit the dependence of $ \L $ on $ \mathcal{T} $ and $ \spaceV (E) $ when clarified by the context.}

\subsection{Proof of Theorem \ref{thm1}}
We build the proof of Theorem \ref{thm1} by the following steps. In Lemma \ref{lemma:inequalitycase}, we prove that whenever $ \mathcal{T} = \{T_i\}_{i\in \N}$ is a determining set for $ \spaceV (E) $ and $\|w\|_{\ell^1}<+\infty$, the norm $ \Vert P \Vert_{\mathrm{op}} $ of the projector $ P : \testforms (E) \to \spaceV (E) $ is upper bounded by $ \L $: $ \Vert P \Vert_{\mathrm{op}} \leq \L $. The converse inequality is not always true \cite{BruniErb23}. We prove such a result assuming that intersections of the supports have vanishing $ \haus^k$-measure. To this aim, we first prove that $ \Vert P \Vert_{\mathrm{op}} \geq \L $ holds under two additional assumptions on $\mathcal T$, see Lemma \ref{lemma:equalitycase}. Then the general case is achieved by an approximation argument. This technique relies on the construction of suitable approximations $\mathcal T^{(\varepsilon)}$ of $\mathcal T$ that on one hand satisfy the assumptions of Lemma \ref{lemma:equalitycase}, and on the other hand are uniformly converging to $\mathcal T$ as linear operators from $\testforms$ to $\ell^1_w$ (this concept of convergence is expanded in \eqref{eq:uniformconvergenceT}). Exploiting these properties, we prove in Lemma \ref{lemma:normconvergence} the strong convergence of the corresponding projectors $ P^{(\varepsilon)} $ towards $ P $ and, in Lemma \ref{lemma:lebesguesemicontinuity}, the lower semicontinuity of the associated Lebesgue constants $ \L_\varepsilon := \L (\mathcal{T}^{(\varepsilon)}, \spaceV (E) )$. This yields the claim of Theorem \ref{thm1}, since
\begin{equation}\label{eq:toproveth1}
 \L \leq \liminf_{\varepsilon \to 0} \L_\varepsilon = \liminf_{\varepsilon \to 0} \Vert P^{(\varepsilon)} \Vert_{\mathrm{op}} = \lim_{\varepsilon \to 0} \Vert P^{(\varepsilon)} \Vert_{\mathrm{op}} = \Vert P \Vert_{\mathrm{op}} \leq \L .
\end{equation}

\begin{lemma}\label{lemma:inequalitycase}
Let $\mathcal T=\{T_i\}_{i\in \N}\subset \averaging(E)$ be a $\spaceV (E) $-determining set and $w\in\ell^1 (\N)$ be a positive sequence. The discrete least squares projection $P$  
satisfies
\begin{equation}\label{Pnormestimaterepeated}
\opnorm{P}\leq\L\,.
\end{equation}
\end{lemma}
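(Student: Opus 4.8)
The plan is to combine the explicit orthonormal-basis representation of $P$ with the dual description of the zero norm, which makes this the \textquotedblleft easy\textquotedblright\ inequality of Theorem \ref{thm1}.

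First I would record the dual characterization $\|\theta\|_0=\sup_{T\in\averaging(E)}|T(\theta)|$ for every $\theta\in\testforms(E)$: this is merely a reformulation of \eqref{0normdef}, since by definition $\averaging(E)$ consists exactly of the averaging currents $\theta\mapsto \haus^k(S)^{-1}\int_S\langle\theta;\tau\rangle\,\de\haus^k$. Then I fix an orthonormal basis $\{\eta_1,\dots,\eta_N\}$ of $(\spaceV(E),(\cdot,\cdot)_{\mathcal T,w})$ and use the representation \eqref{eq:samplingProjector}, namely
\[
P\omega=\sum_{h=1}^N(\omega,\eta_h)_{\mathcal T,w}\,\eta_h=\sum_{h=1}^N\Big(\sum_{i=1}^{+\infty}w_iT_i(\omega)T_i(\eta_h)\Big)\eta_h.
\]
The inner series converges absolutely, since $M(T_i)=1$ gives $|T_i(\omega)|\le\|\omega\|_0$ and $|T_i(\eta_h)|\le\|\eta_h\|_0$, so $\sum_{i=1}^{+\infty}w_i|T_i(\omega)||T_i(\eta_h)|\le\|\omega\|_0\,\|\eta_h\|_0\big(\sum_{i=1}^{+\infty}w_i\big)<+\infty$ because $w\in\ell^1(\N)$.

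Next, for an arbitrary $T\in\averaging(E)$ I would apply $T$ to $P\omega$ and, using the absolute convergence just noted, interchange the finite sum over $h$ with the infinite sum over $i$:
\[
T(P\omega)=\sum_{h=1}^N\sum_{i=1}^{+\infty}w_iT_i(\omega)T_i(\eta_h)T(\eta_h)=\sum_{i=1}^{+\infty}w_i\,T_i(\omega)\Big(\sum_{h=1}^NT_i(\eta_h)T(\eta_h)\Big).
\]
Estimating $|T_i(\omega)|\le\|\omega\|_0$ once more yields
\[
|T(P\omega)|\le\|\omega\|_0\sum_{i=1}^{+\infty}w_i\Big|\sum_{h=1}^NT_i(\eta_h)T(\eta_h)\Big|\le\|\omega\|_0\,\L(\mathcal T,\spaceV(E)),
\]
where the last step is precisely the definition \eqref{eq:defconstantM} of $\L$. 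Taking the supremum over $T\in\averaging(E)$ and invoking the dual characterization of $\|\cdot\|_0$ gives $\|P\omega\|_0\le\L\,\|\omega\|_0$ for every $\omega$, hence $\opnorm{P}\le\L$.

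I do not expect any genuine obstacle here: the only points deserving a word of care are the dual description of the zero norm in terms of $\averaging(E)$ and the justification of swapping the two summations, both of which follow at once from the normalization $M(T_i)=1$ together with $w\in\ell^1(\N)$. The delicate part of Theorem \ref{thm1} is the reverse inequality $\opnorm{P}\ge\L$, which requires the hypothesis $\haus^k(\support T_i\cap\support T_j)=0$ and is treated in the subsequent lemmas.
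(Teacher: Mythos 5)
Your proof is correct and follows essentially the same route as the paper: the paper writes the estimate in terms of the Riesz representer $K_T$ via \eqref{eq:Pbyscalarproduct} and \eqref{eq:rephrasingL}, which is exactly your computation since $T_i(K_T)=\sum_{h=1}^N T_i(\eta_h)T(\eta_h)$, combined with the same dual description of $\|\cdot\|_0$ through $\averaging(E)$ and the bound $|T_i(\omega)|\le\|\omega\|_0$ from $M(T_i)=1$. Your version merely makes explicit two points the paper leaves implicit (the identification $\|\theta\|_0=\sup_{T\in\averaging(E)}|T(\theta)|$ and the justification of the sum interchange), which is fine.
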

\begin{proof}
Using the representations of $P$ and $\L$ based on scalar product (respectively \eqref{eq:Pbyscalarproduct} and   \eqref{eq:rephrasingL}), we can write
\begin{align*}
\|P\omega\|_0\leq & \|\{T_j(\omega)\}_{j\in \N}\|_{\ell^\infty}\sup_{T\in \averaging(E)}\sum_{i=1}^{+\infty}|T_i(K_T)|w_i\leq \|\omega\|_0 \sup_{T\in \averaging(E)}\sum_{i=1}^M|T_i(K_T)|w_i\\
=&\|\omega\|_0\sup_{T\in \averaging(E)}\sum_{i=1}^{+\infty}\left|\sum_{h=1}^NT_i(\eta_h)T(\eta_h)\right|w_i=\|\omega\|_0\L\,.
\end{align*}
  The claimed inequality \eqref{Pnormestimaterepeated} follows by taking the supremum over $\omega\in \testforms(E)$ with $\|\omega\|_0=1$.
\end{proof}

The distance between $ \Vert \Pi \Vert_{\mathrm{op}} $ and $ \L $ has been observed (in the interpolatory case) \revised{when the supports $\{ S_i \}_{i=1}^N $ of the currents $ \{ T_i \}_{i=1}^N $ } present a significant overlapping, see \cite[Figure 3]{BruniErb23}. In the opposite scenario, i.e. when there is some distance between the supports, 
one may exploit bump forms \cite[p. 25]{BottTu} and construct $ \omega \in \testforms (E) $ with $ \Vert \omega \Vert_0 = 1 $ for which equality in the above proof is attained. The intermediate case  $\haus^k(S_i\cap S_j)=0$ for all $i \ne j$ is more involved. In order to prove that equality holds under the natural assumption of disjointedness in measure of the supports, we first establish the same result under much stronger assumptions.
\begin{lemma}\label{lemma:equalitycase}
Assume that the $\spaceV(E)$-determining set $\mathcal T$ fulfills the following properties:
\begin{enumerate}[i)]
\item $\mathcal T=\{T_1,\dots,T_M\}$ with $M<+\infty\,;$
\item the sets $S_i:=\support T_i$ are of the form $S_i=\cup_{l=1}^{\ell_i} S_{i,l}$, $\ell_i<+\infty$, where the sets $S_{i,l}$ are compact subsets of $\mathscr C^1$ embedded oriented $k$-submanifolds of $\R^n$, with orientation $\tau^{i,l}$, and $S_{i,l}\cap S_{j,m}=\emptyset$ whenever $i\neq j$ or $l\neq m.$ 
\end{enumerate}
Then
\begin{equation}\label{Pnormreverseestimate}
 \Vert P \Vert_{\mathrm{op}} \geq \L .
\end{equation}
Hence $\Vert P \Vert_{\mathrm{op}} = \L$.
\end{lemma}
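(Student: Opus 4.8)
The plan is to combine Lemma~\ref{lemma:inequalitycase} with the reverse estimate $\opnorm P\geq\L$, and to obtain the latter by testing $P$ on suitable bump forms. The starting point is the identity, valid for every $T\in\averaging(E)$ and every $\omega\in\testforms(E)$,
\begin{equation*}
T(P\omega)=(P\omega,K_T)_{\mathcal T,w}=(\omega,K_T)_{\mathcal T,w}=\sum_{i=1}^M w_i\,T_i(\omega)\,T_i(K_T),
\end{equation*}
which follows from self-adjointness of $P$, from $K_T=\sum_h T(\eta_h)\eta_h\in\spaceV(E)$ being the \Riz representer of $T|_{\spaceV(E)}$, and hence $PK_T=K_T$, and from \eqref{scalarproduct}. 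Since $M(T)=1$ gives $|T(\psi)|\le\|\psi\|_0$, one has $\opnorm P\ge\|P\omega\|_0/\|\omega\|_0\ge|T(P\omega)|$ for any $\omega$ with $0<\|\omega\|_0\le 1$. So, in view of \eqref{eq:rephrasingL}, it is enough to prove: \emph{for every $T\in\averaging(E)$ there exists $\omega\in\testforms(E)$ with $\|\omega\|_0\le 1$ and $T_i(\omega)=\sigma_i:=\sgn T_i(K_T)$ for $i=1,\dots,M$} (if all $\sigma_i$ vanish there is nothing to do). Indeed, for such an $\omega$ we get $T(P\omega)=\sum_i w_i|T_i(K_T)|$, and taking the supremum over $T\in\averaging(E)$ yields $\opnorm P\ge\L$, whence equality by Lemma~\ref{lemma:inequalitycase}.

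The form $\omega$ is assembled from bump forms supported near the pieces of the supports, and this is where hypotheses (i)--(ii) enter. First I would use that the finitely many $S_{i,l}$ are pairwise disjoint compacta to fix pairwise disjoint open sets $U_{i,l}\supset S_{i,l}$ and scalar cutoffs $\psi_{i,l}\in\mathscr C^0(\R^n,[0,1])$ with $\psi_{i,l}\equiv 1$ on $S_{i,l}$ and $\support\psi_{i,l}\subset U_{i,l}$. On $S_{i,l}$ define the orienting $k$-covector field $\alpha^{i,l}(x):=e^1\wedge\dots\wedge e^k\in\Lambda^k$, where $e^1,\dots,e^k$ is the dual coframe (extended by zero on the orthogonal complement) of a positively oriented orthonormal frame $e_1,\dots,e_k$ of $\Tan(S_{i,l},x)$ with $e_1\wedge\dots\wedge e_k=\tau^{i,l}(x)$; this is well defined and continuous on $S_{i,l}$ because the tangent planes of a $\mathscr C^1$ submanifold depend continuously on the base point, it is simple of unit Euclidean (hence comass) norm, and $\langle\alpha^{i,l}(x);\tau^{i,l}(x)\rangle=\det[e^j(e_m)]=1$. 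Then extend $\alpha^{i,l}$ to a continuous $\beta^{i,l}:\R^n\to\Lambda^k$ via Tietze and set $\tilde\alpha^{i,l}:=\rho\circ\beta^{i,l}$, where $\rho$ is the nearest-point projection of the Euclidean space $\Lambda^k$ onto the \emph{convex} unit comass ball $\{\xi\in\Lambda^k:|\xi|^*\le 1\}$: $\rho$ is everywhere defined and $1$-Lipschitz, so $\tilde\alpha^{i,l}$ is continuous, has $|\tilde\alpha^{i,l}(x)|^*\le 1$ for all $x$, and equals $\alpha^{i,l}$ on $S_{i,l}$ (where $\alpha^{i,l}$ already lies in the ball). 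Finally put $\omega:=\sum_{i,l}\sigma_i\,\psi_{i,l}\,\tilde\alpha^{i,l}\in\testforms(E)$.

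It then remains to check the two properties. Since the $U_{i,l}$ are pairwise disjoint, at each $x$ at most one summand is nonzero, so $|\omega(x)|^*\le\psi_{i,l}(x)\,|\tilde\alpha^{i,l}(x)|^*\le 1$ (or $\omega(x)=0$), and by the characterization $\|\cdot\|_0=\max_x|\cdot(x)|^*$ of \eqref{normequivalence2} we get $\|\omega\|_0\le 1$. As $S_{i,l}\subset U_{i,l}$ and the $U$'s are disjoint, on $S_{i,l}$ we have $\omega=\sigma_i\tilde\alpha^{i,l}=\sigma_i\alpha^{i,l}$, hence $\langle\omega(x);\tau^{i,l}(x)\rangle=\sigma_i$ there; integrating over $S_i=\bigsqcup_l S_{i,l}$ and using \eqref{integralaverage} together with $\haus^k(S_i)=\sum_l\haus^k(S_{i,l})$ gives $T_i(\omega)=\sigma_i$ (we read in (ii) that the orientation of $S_i$ defining $T_i$ restricts $\haus^k$-a.e. to $\tau^{i,l}$ on $S_{i,l}$).

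The main obstacle is exactly the compatibility of the constraint $\omega=\sigma_i\alpha^{i,l}$ on each $S_{i,l}$ with the normalization $\|\omega\|_0\le 1$: the pairing condition forces $\omega$ to be a simple unit covector on the supports, so continuity near $S_{i,l}$ is at odds with a pointwise comass bound unless the pieces are separated. Hypotheses (i)--(ii) remove this difficulty (finitely many, pairwise disjoint, $\mathscr C^1$ pieces), the comass bound being recovered by projecting onto the convex unit comass ball; this is also what fails when the supports overlap and is the source of the gap between $\opnorm\Pi$ and $\L$ observed in \cite{BruniErb23}. The general disjointedness-in-measure case is then handled in the subsequent lemmas by approximating such a general $\mathcal T$ by configurations of this special type.
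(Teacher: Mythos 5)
Your proposal is correct and follows essentially the same route as the paper: rewrite $\L$ via the \Riz representers $K_T$ with the signs $s_i=\sgn T_i(K_T)$, use the finite, pairwise disjoint compact pieces $S_{i,l}$ to choose disjoint open neighborhoods and sign-matching bump forms equal to $\pm$ the metric dual of $\tau^{i,l}$ on each piece with zero norm at most one, and conclude from $T(P\omega)=(\omega,K_T)_{\mathcal T,w}$. The only difference is that you spell out the construction of the bump forms (Tietze extension plus projection onto the convex comass unit ball), where the paper simply invokes bump forms à la Bott--Tu.
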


\begin{proof}
Due to   \eqref{eq:rephrasingL}, setting $s_i(T):=\sgn(T_i(K_T))$, we rephrase the Lebesgue constant $ \L $ using the \Riz representer $K_T$ of $T$
\begin{equation}\label{eq:redefineL}
\L=\sup_{T\in \averaging(E)}\sum_{i=1}^M\vert T_i(K_T)\vert w_i=\sup_{T\in \averaging(E)}\sum_{i=1}^Ms_i(T) T_i(K_T) w_i\,.
\end{equation}
Since $\{S_{i,l},i=1,\dots,M,\; l=1,\dots,\ell_i\}$ is a finite collection of disjoint compact sets, we can pick a family of disjoint open neighborhoods of the elements of such a set. Namely
$$U_{i,l}\supset S_{i,l},\;U_{i,l}\text{ open, }U_{i,l}\cap U_{j,m}=\emptyset,\text{ whenever }i\neq j\text{ or }l\neq m\,.$$
For any $i=1,\dots,M$, $l = 1, \ldots, \ell_i$, we pick a bump form $\omega_{i,l}^T \in \testforms (\R^n) $ compactly supported in $ U_{i,l}$ such that
\begin{align*}
&\omega_{i,l}^T(x)= s_i(\tau_1^{i,l})^*\wedge\dots\wedge(\tau_k^{i,l})^*,\;\;\forall x\in S_{i,l}\\
&\|\omega_{i,l}^T\|_0=1\\
&\sgn \langle\omega_{i,l}^T(x);\tau^{i,l}(x)\rangle=s_i \,,
\end{align*}
where $ \tau^{i,l} $ is the orientation of $ S_{i,l} $ and $(\cdot)^*$ denotes the standard duality between vectors and covectors. Then define the form $\omega^T\in \testforms(E)$ by gluing the forms $\omega^T_{i,l}$ restricted to $E$, i.e.
$$\omega^T(x):=\begin{cases}
\omega_{i,l}^T(x)& x\in U_{i,l}\cap E, \\
0& \text{ otherwise.}
\end{cases}
$$
Notice that $\omega^T$ has been constructed in order to simultaneously get $T_i(\omega^T)=s_i$, $i=1,\dots,M$, and $\|\omega^T\|_0=1$. Finally compute
\begin{align*}
\opnorm{P}=&\sup_{\|\omega\|_0=1}\|P\omega\|_0=\sup_{\|\omega\|_0=1}\sup_{T\in \averaging(E)} (K_T,\omega)_{\mathcal T,w}=\sup_{T\in \averaging(E)}\sup_{\|\omega\|_0=1} (K_T,\omega)_{\mathcal T,w}\\
\geq & \sup_{T\in \averaging(E)} (K_T,\omega^T)_{\mathcal T,w} =\sup_{T\in \averaging(E)}\sum_{i=1}^MT_i(K_T)T_i(\omega^T) w_i=\sup_{T\in \averaging(E)}\sum_{i=1}^Ms_iT_i(\omega^T) w_i\\
=&\L\,,
\end{align*}
where the last equality is due to \eqref{eq:redefineL}.
\end{proof}

\revised{
In order to extend the above result to the case of honest averaging currents we need to introduce an approximation procedure and some new notation. The idea behind our construction is to approximate the countable set of currents $\mathcal T$ of Theorem \ref{thm1} by a set of currents $\mathcal T^{(\varepsilon)}=\{T^{(\varepsilon)}_1,\dots,T^{(\varepsilon)}_{M(\varepsilon)},0,\dots\}$ having mutually disjoint supports, each of whose is made of a disjoint union of a finite collection of compact subsets of $\mathscr C^1$ embedded manifolds with continuous orientations. Notice that, in view of Lemma \ref{lemma:equalitycase}, this in particular implies 
$$ \Vert P^{(\varepsilon)} \Vert_{\mathrm{op}} = \L (\mathcal{T}^{(\varepsilon)}, \spaceV (E) ) =: \L_\varepsilon ,$$
where $ P^{(\varepsilon)} $ is the projection operator onto $ \spaceV (E) $ associated with $ \mathcal{T}^{(\varepsilon)} $ and weights $ w $. In fact, the ultimate aim of this procedure is to obtain the continuity of the norm of the induced fitting operators, i.e. $\varepsilon\to \opnorm{P^{(\varepsilon)}}$, and the lower semicontinuity of the Lebesgue constants $\L_{\varepsilon}.$ These properties are the ingredients used in \eqref{eq:toproveth1} to conclude the proof of Theorem \ref{thm1}.

Consider
$$\mathcal T:=\{T_i\}_{i=1}^{+\infty}\,,\;\;T_i:=\frac{[S_i, \tau_i]}{\haus^k(S_i)},\;\;S_i:=\bigcup_{l=0}^{+\infty} S_{i,l}\,,$$
where, for every $i\in \N$, $\haus^k(S_{i,0})=0$, and, for any $l>0$, the sets $S_{i,l}$ are measurable subsets of embedded $\mathscr C^1$ submanifolds of $\R^n$, with $S_{i,l}\cap S_{i,m}=\emptyset$ whenever $l\neq m$. 
If we further assume, as in Theorem \ref{thm1}, that $\haus^k(S_i\cap S_j)=0$ for any pair of indices $i\neq j$, then, for any $i\in \N$, the set 
$$\widetilde S_i:=S_i\setminus[S_{i,0}\bigcup(\cup_{j\neq i}(S_i\cap S_j))]$$
is a carrier for $T_i$. Indeed, we have
$$T_i=\frac{[\widetilde S_i,\tau_i]}{\haus^k(S_i)}\,,\;\forall i\in \N\,.$$
For each $ i $, the set $\widetilde S_i$ is still a $k$-rectifiable set of finite measure oriented by $\tau_i$. 

The orientation $\tau_i$ is, by definition, an $\haus^k$-measurable function defined on a finite measure space. Therefore, by Lusin Theorem \cite[Theorem 7.10]{Folland}, for any $\varepsilon>0$ and $i\in \N$, we can find a closed subset $\widetilde S_i^{(\varepsilon)}\subset \widetilde S_i$ such that $\tau_i|_{\widetilde S_i^{(\varepsilon)}}$ is continuous, and 
\begin{equation}\label{massleftout}
\haus^k(\widetilde S_i\setminus \widetilde S_i^{(\varepsilon)})\leq \frac \varepsilon 2 \min\{1,\haus^k(S_i)\}.
\end{equation}
Again, the set $\widetilde S_i^{(\varepsilon)}$ is still $k$-rectifiable, hence we can represent it as
$$\widetilde S_i^{(\varepsilon)}=\bigcup_{l=0}^{+\infty} \widetilde S_{i,l}^{(\varepsilon)}
,$$
where, for any $i\in \N$, the set $\widetilde S_{i,0}^{(\varepsilon)}$ has zero $ \haus^k $-measure, $\widetilde S_{i,l}^{(\varepsilon)}\cap \widetilde S_{i,m}^{(\varepsilon)}=\emptyset$ whenever $l\neq m$, and, for $l>0$, $\widetilde S_{i,l}^{(\varepsilon)}$, is a measurable piece of a $\mathscr C^1$ embedded $k$-submanifold of $\R^n$.

For any $\varepsilon>0$, let 
\begin{align*}
M(\varepsilon):=&\min\left\{M:\sum_{i=M+1}^{+\infty} w_i < \varepsilon \sum_{i=1}^{+\infty}w_i  \right\},\\
\ell_i(\varepsilon):=&\min\left\{L:\sum_{l=L+1}^{+\infty}\haus^k(\widetilde S_{i,l}^{(\varepsilon)}) <\frac \varepsilon 4 \min \{1,\haus^k(S_i) \}  \right\}, \quad i=1,2,\dots,M(\varepsilon).
\end{align*}
Exploiting the inner regularity of Hausdorff measure on sets of finite measure (see \cite[Theorem 1.6(b)]{Fa86}), for any $i\in \{1,2,\dots, M(\varepsilon)\}$, we can pick compact subsets $C_{i,l}(\varepsilon)$ of $\widetilde S_{i,l}^{(\varepsilon)}$ such that 
\begin{equation*}
\haus^k(\widetilde S_{i,l}^{(\varepsilon)}\setminus C_{i,l}(\varepsilon))\leq \frac \varepsilon {4\ell_i(\varepsilon)}\min\{1,\haus^k(S_i)\}\,.
\end{equation*}

Finally, set
\begin{equation}\label{eq:myapprox}
S_i^{(\varepsilon)}:= \bigcup_{l=1}^{\ell_i(\varepsilon)} C_{i,l}(\varepsilon)\,,\;\;T_i^{(\varepsilon)}:=\begin{cases}
\frac{[S_i^{(\varepsilon)},\tau_i]}{\haus^k(S_i^{(\varepsilon)})} &1\leq i\leq M(\varepsilon)\\
0&\text{ otherwise }\end{cases}\,,\;\;\mathcal T^{(\varepsilon)}:=\{T_i^{(\varepsilon)}\}_{i=1}^{+\infty}\,.
\end{equation}
Let us collect some of the properties of this approximation sequence:

\begin{lemma}\label{lem:preparation}
Let $\mathcal T$ be as above and, for any $\varepsilon>0$, let $\mathcal T^{(\varepsilon)}$ be defined as in \eqref{eq:myapprox}. Then the following hold:
\begin{enumerate}[a)]
\item $\mathcal T^{(\varepsilon)}$ converges uniformly to $\mathcal T$, i.e.,
\begin{equation}\label{eq:uniformconvergenceT}
\lim_{\varepsilon\to 0^+} \sup_{\Vert \omega \Vert_0 = 1} \Vert [\mathcal{T} - \mathcal{T}^{(\varepsilon)} ] (\omega)\Vert_{\ell^1_w}=0\,;
\end{equation}
\item The reproducing kernel $K^{(\varepsilon)}$ of $\Big(\spaceV (E),(\cdot,\cdot)_{\mathcal T^{(\varepsilon)},w}\Big)$ converges uniformly to the reproducing kernel $ K $ of $\Big(\spaceV (E),(\cdot,\cdot)_{\mathcal T,w}\Big)$, i.e.,
\begin{equation}\label{eq:convergenceofkernelI}
\lim_{\varepsilon\to 0} \|K -K^{(\varepsilon)}\|_0 = 0\,,
\end{equation}
where, for any $\omega,\eta\in \testforms(E)$, we set $ \Vert \omega \otimes \eta \Vert_0 := \sup_{S,T \in \averaging(E)} | (S \otimes T) (\omega \otimes \eta) | ;$
\end{enumerate}
\end{lemma}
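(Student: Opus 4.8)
The plan is to prove the two uniform convergence statements \eqref{eq:uniformconvergenceT} and \eqref{eq:convergenceofkernelI} essentially in sequence, deriving (b) from (a) together with a finite-dimensional argument. For part (a), I would first estimate, for each fixed $i\le M(\varepsilon)$ and each $\omega$ with $\|\omega\|_0=1$, the difference $|T_i(\omega)-T_i^{(\varepsilon)}(\omega)|$. Writing $T_i=[\widetilde S_i,\tau_i]/\haus^k(S_i)$ and $T_i^{(\varepsilon)}=[S_i^{(\varepsilon)},\tau_i]/\haus^k(S_i^{(\varepsilon)})$, one splits the difference into two contributions: the change of integration domain (from $\widetilde S_i$ to $S_i^{(\varepsilon)}$) and the change of normalizing constant (from $\haus^k(S_i)$ to $\haus^k(S_i^{(\varepsilon)})$). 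Since $\|\omega\|_0=1$ controls $|\langle\omega(x);\tau_i(x)\rangle|\le 1$ $\haus^k$-a.e.\ on any rectifiable set (by \eqref{0normdef}), both contributions are bounded by a fixed multiple of $\haus^k(\widetilde S_i\setminus S_i^{(\varepsilon)})/\min\{1,\haus^k(S_i)\}$. Chaining the three successive approximations — Lusin (inequality \eqref{massleftout}), truncation of the countable union at $\ell_i(\varepsilon)$, and inner regularity down to the compacta $C_{i,l}(\varepsilon)$ — gives $\haus^k(\widetilde S_i\setminus S_i^{(\varepsilon)})\le \varepsilon\min\{1,\haus^k(S_i)\}$, hence $|T_i(\omega)-T_i^{(\varepsilon)}(\omega)|\le c\,\varepsilon$ uniformly in $\omega$ and in $i\le M(\varepsilon)$, for an absolute constant $c$.

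Then I would sum against the weights: for $i\le M(\varepsilon)$ we use the pointwise bound just obtained, and for $i>M(\varepsilon)$ we use that $T_i^{(\varepsilon)}=0$, $M(T_i)=1$, and $\sum_{i>M(\varepsilon)}w_i<\varepsilon\|w\|_{\ell^1}$ by the very choice of $M(\varepsilon)$. This yields
\begin{equation*}
\sup_{\|\omega\|_0=1}\sum_{i=1}^{+\infty}|T_i(\omega)-T_i^{(\varepsilon)}(\omega)|w_i\le c\,\varepsilon\sum_{i=1}^{M(\varepsilon)}w_i+\sum_{i>M(\varepsilon)}w_i\le (c+1)\varepsilon\|w\|_{\ell^1},
\end{equation*}
which tends to $0$ as $\varepsilon\to 0^+$, proving \eqref{eq:uniformconvergenceT}.

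For part (b), I would pass through the Gram matrices. Fix once and for all a basis $v_1,\dots,v_N$ of $\spaceV(E)$; let $G$ and $G^{(\varepsilon)}$ be the Gram matrices of $(\cdot,\cdot)_{\mathcal T,w}$ and $(\cdot,\cdot)_{\mathcal T^{(\varepsilon)},w}$ in this basis. Each entry is $\sum_i w_i T_i(v_h)T_i(v_m)$, and using (a) together with the uniform mass bound $M(T_i),M(T_i^{(\varepsilon)})\le 1$ and $\|v_h\|_0<+\infty$, one gets $\|G-G^{(\varepsilon)}\|\to 0$. Since $G$ is invertible (because $\mathcal T$ is $\spaceV(E)$-determining), $G^{(\varepsilon)}$ is invertible for $\varepsilon$ small and $(G^{(\varepsilon)})^{-1}\to G^{-1}$. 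Now $K^{(\varepsilon)}(x,y)=\sum_{h,m}(G^{(\varepsilon)})^{-1}_{h,m}v_h(x)\otimes v_m(y)$ and likewise for $K$; subtracting and using $\|v_h\otimes v_m\|_0=\|v_h\|_0\|v_m\|_0<+\infty$ gives $\|K-K^{(\varepsilon)}\|_0\le C\|(G^{(\varepsilon)})^{-1}-G^{-1}\|\to 0$, which is \eqref{eq:convergenceofkernelI}.

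The main obstacle I anticipate is the bookkeeping in part (a): carefully tracking how the three nested approximations combine, and in particular justifying that a uniform control on $|\langle\omega(x);\tau_i(x)\rangle|$ by $\|\omega\|_0$ holds $\haus^k$-a.e.\ on each piece $\widetilde S_{i,l}^{(\varepsilon)}$ (this needs the characterization of $\|\cdot\|_0$ as the pointwise comass norm, \eqref{normequivalence2}, or a direct argument using small neighborhoods of points of approximate tangency), and handling the normalization denominators $\haus^k(S_i^{(\varepsilon)})$, which are only guaranteed to stay bounded away from $0$ thanks to the factor $\min\{1,\haus^k(S_i)\}$ in the error budget. Everything else is a routine finite-dimensional perturbation argument.
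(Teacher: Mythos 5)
Your proposal is correct and follows essentially the same route as the paper: for (a) the same decomposition of $|T_i(\omega)-T_i^{(\varepsilon)}(\omega)|$ into the domain-change and normalization-change contributions, controlled by $\haus^k(\widetilde S_i\setminus S_i^{(\varepsilon)})\leq\varepsilon\min\{1,\haus^k(S_i)\}$ from the three nested approximations, followed by the same split of the weighted sum at $M(\varepsilon)$ using the unit mass bound and the summability of $w$; for (b) the same passage through uniform convergence of the Gram matrices. The only (harmless) difference is in the last step of (b): the paper feeds $G^{(\varepsilon)}\to G$ into a continuous orthogonalization algorithm to get convergent orthonormal bases and hence convergent kernels, whereas you write $K^{(\varepsilon)}=\sum_{h,m}(G^{(\varepsilon)})^{-1}_{h,m}v_h\otimes v_m$ and use continuity of matrix inversion, which also cleanly settles the positive definiteness of $G^{(\varepsilon)}$ for small $\varepsilon$.
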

 
\begin{proof}
Note that, by construction and due to \eqref{massleftout}, for all $i=1,\dots,M(\varepsilon)$, 
$$ \Vert [ S_i, \tau_i ] - [ S_i^{(\varepsilon)}, \tau_i ] \Vert_{\mathrm{op}}\leq\haus^k(S_i\setminus \widetilde S_i^{(\varepsilon)})+\haus^k( \widetilde S_i^{(\varepsilon)}\setminus S_i^{(\varepsilon)}) \leq \frac \varepsilon 2 \min \{ 1,\haus^k(S_i) \}\,.$$

As a consequence, by a polarization argument and the triangular inequality, one has, for any such $i$, 
\begin{equation} \label{eq:estimateR}
    \Vert T_i - T_i^{(\varepsilon)} \Vert_{\mathrm{op}} \leq 2\frac{\haus^k(S_i\setminus S_i^{(\varepsilon)})}{\haus^k(S_i)}\leq \varepsilon\frac{\min \{1,\haus^k(S_i)\}}{\haus^k(S_i)}\leq \varepsilon\,.
\end{equation}
The uniform convergence of $ \mathcal{T}^{(\varepsilon)}$ to $ \mathcal{T} $ immediately follows. In fact, one has
\begin{align*} 
&\sup_{\Vert \omega \Vert_0 = 1} \Vert [\mathcal{T} - \mathcal{T}^{(\varepsilon)} ] (\omega)\Vert_{\ell^1_w} 
 =\sup_{\Vert \omega \Vert_0 = 1 } \left( \sum_{i=1}^{+\infty}|T_i(\omega)-T_i^{(\varepsilon)}(\omega)|w_i \right) \nonumber \\
 \leq& \sup_{\Vert \omega \Vert_0 = 1 } \left( \sum_{i=1}^{M(\varepsilon)}|T_i(\omega)-T_i^{(\varepsilon)}(\omega)|w_i \right) + \sum_{i=M(\varepsilon) + 1}^{+ \infty} w_i \leq 2 \varepsilon \Vert w \Vert_{\ell^1},
\end{align*}
which tends to zero as $ \varepsilon \to 0 $, since the weights are summable.

Let $\{\vartheta_h\}_{h=1}^N $ be any basis of $ \spaceV (E) $ and let $ \mathcal{A} $ be any continuous matrix orthogonalization algorithm (such as, e.g., Gram-Schmidt). Denote by $ \{ \eta_h \}_{h=1}^N $ the orthonormal basis of $ \spaceV (E) $ with respect to $(\cdot,\cdot)_{\mathcal T, w} $ obtained from the Gram matrix $ G:=[(\vartheta_h,\vartheta_k)_{\mathcal T,w}]_{h,k=1}^N $ by $ \mathcal{A} $; likewise, denote by $ \{ \eta_h^{(\varepsilon)} \}_{h=1}^N $ the orthonormal basis of $\spaceV(E)$ with respect to $(\cdot,\cdot)_{\mathcal T^{(\varepsilon)}, w} $ obtained from $ G^{(\varepsilon)}:= [(\vartheta_h,\vartheta_k)_{\mathcal T^{(\varepsilon)},w}]_{h,k=1}^N  $ by $ \mathcal{A} $. 
One has
\begin{equation} \label{eq:estimateeta}
\max_{h=1,\dots ,N}\Vert \eta_h^{(\varepsilon)} - \eta_h \Vert_0 \to 0
\end{equation}
as $ \varepsilon \to 0 $. Indeed, due to the continuity of $\mathcal A$, it is sufficient to show that the entries of $ G^{(\varepsilon)} $ are uniformly converging to those of $ G $. 
Using \eqref{eq:estimateR}, we compute
\begin{align*}
&\max_{h,k}|(\vartheta_h,\vartheta_k)_{\mathcal T,w}-(\vartheta_h,\vartheta_k)_{\mathcal T^{(\varepsilon)},w}|\\
\leq & \max_{h,k}\sum_{i=1}^{+\infty}|(T_i(\vartheta_h)-T_i^\varepsilon(\vartheta_h))T_i(\vartheta_k)w_i|+ \max_{h,k}\sum_{i=1}^{+ \infty}|T_i^\varepsilon(\vartheta_h)(T_i(\vartheta_k)-T_i^\varepsilon(\vartheta_k)))w_i|\\
\leq& \max_{h,k}\|\vartheta_h\|_0\|\vartheta_k\|_0 2 \sup_{\Vert \omega \Vert_0 = 1 } \left( \sum_{i=1}^{+\infty}|T_i(\omega)-T_i^{(\varepsilon)}(\omega)|w_i \right),
\end{align*}
which converges to zero as $ \varepsilon \to 0 $. As a by-product, \eqref{eq:estimateeta} yields the convergence of the reproducing kernels
\begin{equation*}
\lim_{\varepsilon\to 0} \|K -K^{(\varepsilon)}\|_0 = 0\,,
\end{equation*}
where $ K(x,y):=\sum_{h=1}^N\eta_h(x)\otimes \eta_h(y) $ and $ K^{(\varepsilon)}(x,y):=\sum_{h=1}^N\eta^{(\varepsilon)}_h(x)\otimes \eta^{(\varepsilon)}_h(y) $.
\end{proof}
}
Lemma \ref{lem:preparation} puts us in a position to prove the following.
\begin{lemma}\label{lemma:normconvergence}
    Under the above assumptions,
    $$ \Vert P - P^{(\varepsilon)} \Vert_{\mathrm{op}} \to 0 $$
    as $ \varepsilon \to 0 $. In particular, $ \Vert P^{(\varepsilon)} \Vert_{\mathrm{op}} \to \Vert P \Vert_{\mathrm{op}} $.
\end{lemma}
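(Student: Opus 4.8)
The plan is to show that $P^{(\varepsilon)}\to P$ strongly in operator norm by using the explicit representation of these projectors via reproducing kernels (Equation \eqref{eq:Pbyscalarproduct}) and the convergence of the kernels established in Lemma \ref{lem:preparation}(b). Concretely, for any $\omega\in\testforms(E)$ and $x\in E$, one has $(P\omega)(x)=(K(x,y),\omega(y))_{\mathcal T,w}$ and $(P^{(\varepsilon)}\omega)(x)=(K^{(\varepsilon)}(x,y),\omega(y))_{\mathcal T^{(\varepsilon)},w}$. The subtlety is that the two scalar products are different, so I cannot simply subtract the kernels; instead I would pass through the orthonormal bases. Using $P\omega=\sum_{h=1}^N(\omega,\eta_h)_{\mathcal T,w}\eta_h$ and $P^{(\varepsilon)}\omega=\sum_{h=1}^N(\omega,\eta_h^{(\varepsilon)})_{\mathcal T^{(\varepsilon)},w}\eta_h^{(\varepsilon)}$, the difference $P\omega-P^{(\varepsilon)}\omega$ splits (by adding and subtracting cross terms) into a sum of contributions each involving either $\|\eta_h-\eta_h^{(\varepsilon)}\|_0$ or the difference of the coefficient functionals $(\omega,\cdot)_{\mathcal T,w}$ versus $(\omega,\cdot)_{\mathcal T^{(\varepsilon)},w}$.

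The key estimates are already in hand. First, $\max_h\|\eta_h-\eta_h^{(\varepsilon)}\|_0\to 0$ from \eqref{eq:estimateeta}. Second, for a fixed element $\vartheta$ of the fixed basis $\{\vartheta_k\}$, the functional $\omega\mapsto(\omega,\vartheta)_{\mathcal T,w}=\sum_i w_iT_i(\omega)T_i(\vartheta)$ satisfies
\begin{equation*}
|(\omega,\vartheta)_{\mathcal T,w}-(\omega,\vartheta)_{\mathcal T^{(\varepsilon)},w}|\leq \|\vartheta\|_0\,\|\omega\|_0\Big(\sup_{\|\omega'\|_0=1}\sum_{i=1}^{+\infty}|T_i(\omega')-T_i^{(\varepsilon)}(\omega')|w_i\Big)+\text{(symmetric term)},
\end{equation*}
which is $O(\varepsilon)$ uniformly in $\omega$ on the unit ball by the uniform convergence \eqref{eq:uniformconvergenceT}. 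Since $\eta_h$ and $\eta_h^{(\varepsilon)}$ are fixed linear combinations of the $\vartheta_k$ with coefficients depending continuously (via $\mathcal A$) on the Gram matrices, and the Gram matrices converge, the coefficients are uniformly bounded in $\varepsilon$; hence both $|(\omega,\eta_h)_{\mathcal T,w}|$ and $|(\omega,\eta_h^{(\varepsilon)})_{\mathcal T^{(\varepsilon)},w}|$ are bounded by $C\|\omega\|_0$ with $C$ independent of $\varepsilon$, and the differences of these coefficient functionals are $O(\varepsilon)\|\omega\|_0$. Plugging these bounds into the telescoped expression for $P\omega-P^{(\varepsilon)}\omega$ and taking $\|\cdot\|_0$ gives $\|P\omega-P^{(\varepsilon)}\omega\|_0\leq C'\varepsilon\|\omega\|_0$, and taking the supremum over $\|\omega\|_0=1$ yields $\|P-P^{(\varepsilon)}\|_{\mathrm{op}}\to 0$. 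The final claim $\|P^{(\varepsilon)}\|_{\mathrm{op}}\to\|P\|_{\mathrm{op}}$ is then immediate from the reverse triangle inequality $\big|\,\|P^{(\varepsilon)}\|_{\mathrm{op}}-\|P\|_{\mathrm{op}}\,\big|\leq\|P-P^{(\varepsilon)}\|_{\mathrm{op}}$.

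The main obstacle is bookkeeping rather than conceptual: one must be careful that the two orthonormal systems live with respect to \emph{different} inner products, so the clean identity $P-P^{(\varepsilon)}=\sum_h(\cdot,\eta_h-\eta_h^{(\varepsilon)})\eta_h+\dots$ only holds after consistently expanding $(\omega,\eta_h^{(\varepsilon)})_{\mathcal T^{(\varepsilon)},w}$ in the fixed basis $\{\vartheta_k\}$ and tracking that the ``change of inner product'' error $(\omega,\vartheta_k)_{\mathcal T,w}-(\omega,\vartheta_k)_{\mathcal T^{(\varepsilon)},w}$ is controlled uniformly on the unit ball of $\testforms(E)$ — which is precisely what part (a) of Lemma \ref{lem:preparation} provides. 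A cleaner alternative, avoiding the cross terms entirely, is to write $P^{(\varepsilon)}\omega-P\omega$ as the sum of $(P^{(\varepsilon)}-P)$ applied after noting both project onto the same finite-dimensional space $\spaceV(E)$ on which all norms are equivalent; then it suffices to estimate $\|(P^{(\varepsilon)}-P)\omega\|_{\mathcal T,w}$, reducing everything to convergence of Gram matrices and of the representing functionals. I would present the argument via the orthonormal-basis expansion since all the needed pieces are already displayed in Lemma \ref{lem:preparation}.
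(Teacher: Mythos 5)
Your argument is correct and follows essentially the same route as the paper: both proofs telescope the difference $P\omega-P^{(\varepsilon)}\omega$ by adding and subtracting mixed terms and then control each piece with exactly the two ingredients of Lemma \ref{lem:preparation}, namely the uniform convergence \eqref{eq:uniformconvergenceT} of the currents and the convergence \eqref{eq:estimateeta}--\eqref{eq:convergenceofkernelI} of the orthonormal bases/kernels, finishing with the reverse triangle inequality. The only difference is bookkeeping (you expand via the bases $\eta_h,\eta_h^{(\varepsilon)}$ and the coefficient functionals, the paper via the kernels $K,K^{(\varepsilon)}$), which does not change the substance of the proof.
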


\begin{proof}
    Using the reproducing kernel property and adding and subtracting suitable terms, we find
    \begin{align*}
        [ P - P^{(\varepsilon)}] (\omega) & = (K, \omega)_{\mathcal{T},w} - (K^{(\varepsilon)}, \omega)_{\mathcal{T}^{(\varepsilon)},w} = \sum_{i=1}^{+\infty} \left( T_i (K) T_i (\omega) - T^{(\varepsilon)}_i (K^{(\varepsilon)}) T^{(\varepsilon)}_i (\omega) \right) w_i \\
        & = \sum_{i=1}^{+ \infty} \Bigg[ T_i(K) \left( T_i - T_i^{(\varepsilon)} \right) (\omega) + T_i^{(\varepsilon)} (\omega) T_i (K - K^{(\varepsilon)} ) +\\
        &\;\;\;\;\;\;\;\;\;\;\;\;\;\;\;\;\;\;\;\;\;\;\;\;\;\;\;\;\;\;\;\;\;\;\;\;\;\;\;\;\;\;\;\; T_i^{(\varepsilon)} (\omega)\left( T_i - T_i^{(\varepsilon)}\right) ( K^{(\varepsilon)}) \Bigg] w_i .
    \end{align*}
    Hence, passing to norms and applying the triangular inequality, we can bound $ \Vert [P - P^{(\varepsilon)}] (\omega) \Vert_0 $ from above as the sum of three terms:
    \begin{align*}
        \Vert P - P^{(\varepsilon)} \Vert_{\mathrm{op}} \leq& 
        \sup_{\Vert \omega \Vert_0 = 1} \sup_{T \in \averaging (E)}  \sum_{i=1}^{+ \infty} \left| (T\otimes T_i)(K) \left( T_i - T_i^{(\varepsilon)} \right) (\omega)\right|w_i \\
        &+ \sup_{\Vert \omega \Vert_0 = 1} \sup_{T \in \averaging (E)}  \sum_{i=1}^{+ \infty} \left|T_i^{(\varepsilon)} (\omega) (T\otimes T_i) (K - K^{(\varepsilon)} )\right|w_i \\
        &+ \sup_{\Vert \omega \Vert_0 = 1} \sup_{T \in \averaging (E)}  \sum_{i=1}^{+ \infty} \left| T_i^{(\varepsilon)} (\omega)\left(T\otimes \left( T_i - T_i^{(\varepsilon)}\right)\right) ( K^{(\varepsilon)}) \right| w_i .    \end{align*}
        We show that each one of the three terms in the above sum converges to zero as $ \varepsilon \to 0 $. The first one can be handled by using the H\"older inequality:
        \begin{align*} & \sup_{\Vert \omega \Vert_0 = 1} \sup_{T \in \averaging (E)} \sum_{i=1}^{+ \infty} \left| (T\otimes T_i)(K) \left( T_i - T_i^{(\varepsilon)} \right) (\omega)\right|w_i  \\ \leq & \sup_{T \in \averaging (E)} \Vert \left\{ [(T\otimes T_i) (K)] \right\} \Vert_{\ell^{\infty}} \sup_{\Vert \omega \Vert_0 = 1} \Vert [\mathcal{T} - \mathcal{T}^{(\varepsilon)} ] (\omega)\Vert_{\ell^1_w} \leq \Vert K \Vert_0 \sup_{\Vert \omega \Vert_0 = 1} \Vert [\mathcal{T} - \mathcal{T}^{(\varepsilon)} ] (\omega)\Vert_{\ell^1_w} ,
        \end{align*}
        which tends to zero as $ \varepsilon \to 0 $ due to   \eqref{eq:uniformconvergenceT}. The same argument, paired with   \eqref{eq:convergenceofkernelI}, yields the convergence of the second term. The third one requires a more careful approach:
        \begin{align*}
            & \sup_{\Vert \omega \Vert_0 = 1} \sup_{T \in \averaging (E)} \sum_{i=1}^{+ \infty} \left| T_i^{(\varepsilon)} (\omega)\left(T\otimes \left( T_i - T_i^{(\varepsilon)}\right)\right) ( K^{(\varepsilon)}) \right| w_i \\
            \leq & \sup_{T \in \averaging (E)} \sum_{i=1}^{+ \infty} \left|\left(T\otimes \left( T_i - T_i^{(\varepsilon)}\right)\right) ( K^{(\varepsilon)}) \right| w_i = \sup_{T \in \averaging (E)} \sum_{i=1}^{+ \infty} \left| \sum_{h=1}^N T(\eta_h^{(\varepsilon)}) (T_i - T_i^{(\varepsilon)}) (\eta_h^{(\varepsilon)}) \right| w_i \\
            \leq & \max_h \Vert \eta_h^{(\varepsilon)} \Vert_0 \sum_{i=1}^{+ \infty} \sum_{h=1}^N \left| (T_i - T_i^{(\varepsilon)}) (\eta_h^{(\varepsilon)}) \right| w_i = \max_h \Vert \eta_h^{(\varepsilon)} \Vert_0 \sum_{h=1}^N \Vert [\mathcal{T} - \mathcal{T}^{(\varepsilon)} ] (\eta_h^{(\varepsilon)})\Vert_{\ell^1_w} \\
            \leq & N (\max_h \Vert \eta_h^{(\varepsilon)} \Vert_0)^2 \sup_{\Vert \omega \Vert_0 = 1} \Vert [\mathcal{T} - \mathcal{T}^{(\varepsilon)} ] (\omega)\Vert_{\ell^1_w},
        \end{align*}
which again converges to zero as $ \varepsilon \to 0 $ due to   \eqref{eq:uniformconvergenceT}.
\end{proof}

To conclude the proof of Theorem \ref{thm1}, we need a last ingredient: the lower semicontinuity of the Lebesgue constant $\L_\varepsilon$, when regarded as a function of one real variable $\varepsilon\in(-\infty,1)$. Precisely, we set
\begin{equation*}
    L(\varepsilon) :=\begin{cases}
	\L& \varepsilon\in (-\infty,0]\\    
    \L_\varepsilon& \varepsilon\in(0,1)
    \end{cases}\;=\;
    \sup_{T\in \averaging(E)} \begin{cases}
    \sum_{i=1}^{+\infty}\left| \sum_{h=1}^NT_i (\eta_h)T(\eta_h)\right|w_i & \varepsilon\in(-\infty,0], \\
    \sum_{i=1}^{M(\varepsilon)}\left| \sum_{h=1}^NT_i^{(\varepsilon)} (\eta_h^{(\varepsilon)})T(\eta_h^{(\varepsilon)})\right|w_i & \varepsilon\in(0,1).
    \end{cases}
\end{equation*}

\begin{lemma}\label{lemma:lebesguesemicontinuity}
    The function $ \varepsilon \mapsto L(\varepsilon) $ is lower semicontinuous. In particular
    \begin{equation} \label{eq:Llowersemicontinuity}
    \L\leq \liminf_{\varepsilon\to 0^+}\L_\varepsilon.
    \end{equation}
\end{lemma}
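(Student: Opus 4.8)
The plan is to exhibit $L$ as a pointwise supremum of lower semicontinuous functions and to conclude from the general fact that such a supremum is lower semicontinuous. Concretely, for $T\in\averaging(E)$ I would set $g_T(\varepsilon)$ to be the quantity whose supremum over $T$ defines $L(\varepsilon)$, namely
\begin{equation*}
g_T(\varepsilon):=
\begin{cases}
\displaystyle\sum_{i=1}^{+\infty}\Big|\sum_{h=1}^N T_i(\eta_h)T(\eta_h)\Big|w_i, & \varepsilon\le0,\\[1.7ex]
\displaystyle\sum_{i=1}^{+\infty}\Big|\sum_{h=1}^N T_i^{(\varepsilon)}(\eta_h^{(\varepsilon)})T(\eta_h^{(\varepsilon)})\Big|w_i, & \varepsilon\in(0,1),
\end{cases}
\end{equation*}
where the series in the second line is in fact the finite sum in the definition of $L(\varepsilon)$, since $T_i^{(\varepsilon)}=0$ for $i>M(\varepsilon)$. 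Then $L=\sup_{T\in\averaging(E)}g_T$, so it is enough to prove that each $g_T$ is lower semicontinuous; and since $g_T$ is constant on $(-\infty,0]$, the only point that requires attention — and the only one used in the sequel — is $\varepsilon=0$, where lower semicontinuity reads $g_T(0)\le\liminf_{\varepsilon\to0^+}g_T(\varepsilon)$; after taking the supremum over $T$ this is exactly \eqref{eq:Llowersemicontinuity}.

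To establish $g_T(0)\le\liminf_{\varepsilon\to0^+}g_T(\varepsilon)$ I would invoke Fatou's lemma for the series $\sum_{i\ge1}(\cdot)\,w_i$, regarded as integration over $\N$ against the finite measure $\sum_{i\ge1}w_i\delta_i$. The input Fatou needs is the termwise convergence of the summands as $\varepsilon\to0^+$. Fixing $i$, I would use that $w\in\ell^1(\N)$ forces $M(\varepsilon)\to+\infty$, so $i\le M(\varepsilon)$ for all small $\varepsilon$ and then \eqref{eq:estimateR} gives $\opnorm{T_i-T_i^{(\varepsilon)}}\le\varepsilon$; combining this with the uniform convergence $\|\eta_h^{(\varepsilon)}-\eta_h\|_0\to0$ of \eqref{eq:estimateeta}, with $M(T_i^{(\varepsilon)})\le1$, and with $M(T)=1$, one gets
\begin{equation*}
\big|T_i^{(\varepsilon)}(\eta_h^{(\varepsilon)})-T_i(\eta_h)\big|\le\|\eta_h^{(\varepsilon)}-\eta_h\|_0+\varepsilon\|\eta_h\|_0\to0,\qquad \big|T(\eta_h^{(\varepsilon)})-T(\eta_h)\big|\le\|\eta_h^{(\varepsilon)}-\eta_h\|_0\to0,
\end{equation*}
whence the $i$-th summand of $g_T(\varepsilon)$ converges to the $i$-th summand of $g_T(0)$. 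This is the step I expect to require the most care: one must handle the simultaneous perturbation of the currents $T_i^{(\varepsilon)}$ and of the orthonormal basis $\eta_h^{(\varepsilon)}$, and cope with the fact that the bound \eqref{eq:estimateR} is available only for $i\le M(\varepsilon)$ — although all the necessary ingredients are already packaged in Lemma \ref{lem:preparation}. (Equivalently, one can avoid Fatou and argue by truncating the series at a fixed index $I$ large enough that the first $I$ terms of $g_T(0)$ already exceed $\L-\delta$, then letting $\varepsilon\to0^+$.)

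With termwise convergence in hand, Fatou's lemma along an arbitrary sequence $\varepsilon_m\to0^+$ gives
\begin{equation*}
g_T(0)=\sum_{i=1}^{+\infty}\lim_m\Big|\sum_{h=1}^N T_i^{(\varepsilon_m)}(\eta_h^{(\varepsilon_m)})T(\eta_h^{(\varepsilon_m)})\Big|w_i\le\liminf_m\sum_{i=1}^{+\infty}\Big|\sum_{h=1}^N T_i^{(\varepsilon_m)}(\eta_h^{(\varepsilon_m)})T(\eta_h^{(\varepsilon_m)})\Big|w_i=\liminf_m g_T(\varepsilon_m),
\end{equation*}
so $g_T(0)\le\liminf_{\varepsilon\to0^+}g_T(\varepsilon)$, i.e. $L$ is lower semicontinuous at $0$. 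Taking the supremum over $T\in\averaging(E)$ and using the elementary $\sup_T\liminf_\varepsilon(\cdot)\le\liminf_\varepsilon\sup_T(\cdot)$ then yields
\begin{equation*}
\L=L(0)=\sup_{T\in\averaging(E)}g_T(0)\le\liminf_{\varepsilon\to0^+}\sup_{T\in\averaging(E)}g_T(\varepsilon)=\liminf_{\varepsilon\to0^+}\L_\varepsilon,
\end{equation*}
which is \eqref{eq:Llowersemicontinuity}. Lower semicontinuity of $L$ at the remaining points of $(-\infty,1)$ is immediate on $(-\infty,0]$, where $L$ is constant, and is not needed for the sequel; the reduction of the truncated sum over $i\le M(\varepsilon)$ to the full series and the $\sup$–$\liminf$ exchange are routine.
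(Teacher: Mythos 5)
Your proof is correct and follows essentially the same route as the paper: you write $L$ as the pointwise supremum over $T\in\averaging(E)$ of the functions $g_T$ (the paper's $f_T$), pass to the limit term by term in the series using \eqref{eq:estimateR}, \eqref{eq:estimateeta} and the summability of $w$, and conclude by the supremum-of-lower-semicontinuous-functions principle, with your use of Fatou (or truncation) at $\varepsilon=0$ being a harmless variant of the paper's dominated-convergence-style continuity claim for each $f_T$. The only difference of substance is that you establish lower semicontinuity only on $(-\infty,0]$, explicitly leaving out $\varepsilon\in(0,1)$; this suffices for \eqref{eq:Llowersemicontinuity} and for everything used later, but it falls slightly short of the lemma's literal statement, which the paper covers by asserting continuity of each $f_T$ on all of $(-\infty,1)$.
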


\begin{proof}
By definition, $ L $ is the point-wise supremum of the family of functions
$$f_T(\varepsilon):= 
\begin{cases}
    \sum_{i=1}^{+\infty}\left| \sum_{h=1}^NT_i (\eta_h)T(\eta_h)\right|w_i & \varepsilon\in(-\infty,0], \\
    \sum_{i=1}^{M(\varepsilon)}\left| \sum_{h=1}^NT_i^{(\varepsilon)} (\eta_h^{(\varepsilon)})T(\eta_h^{(\varepsilon)})\right|w_i & \varepsilon\in(0,1) .
    \end{cases}
$$
The continuity of $ f_T $ at any $\varepsilon\in(-\infty,1)$ and for every $ T \in \averaging (E) $ easily follows from the strong convergence of $T_i^{\varepsilon+\delta}\to T_i^\varepsilon$ given in \eqref{eq:estimateR}, the norm convergence of $\eta_h^{\varepsilon+\delta}\to \eta_h^\varepsilon$ introduced in \eqref{eq:estimateeta}, and the summability of the weights $w_i$'s. Since $L$ is the pointwise supremum of a family of continuous functions, it is also lower semicontinuous.
\end{proof}

Gathering Lemmata \ref{lemma:inequalitycase} \ref{lemma:equalitycase}, \ref{lemma:normconvergence} and \ref{lemma:lebesguesemicontinuity}, we obtain the claim of Theorem \ref{thm1} via \eqref{eq:toproveth1}. 

\subsection{Continuity properties of $\L$} \label{sec:contL}
While Lemma \ref{lemma:lebesguesemicontinuity} is targeted to the proof of Theorem \ref{thm1}, it apparently creates a lack of symmetry between the above argument and the standard case of nodal interpolation of functions, where the Lebesgue constant is continuous with respect to perturbations in the Hausdorff distance of the underlying nodes \cite{PiazzonStability,VianelloStability}. Indeed, convergence of interpolation nodes $ \xi_i^{(\varepsilon)} $ in Hausdorff distance translates, in our setting, to point-wise convergence (in the sense of \eqref{eq:pointwiseconvergence}) of the sampling functionals $ \mathcal{T} ^{(\varepsilon)} $, which is a much weaker property than the uniform convergence of $ \mathcal{T} ^{(\varepsilon)} $ introduced in \eqref{eq:uniformconvergence} and used in Lemma \ref{lemma:normconvergence}. To resolve such an asymmetry, we claim that we can in fact prove a stronger version of   \eqref{eq:Llowersemicontinuity} under weaker assumptions.

\begin{proposition}[Continuity of the Lebesgue constant]\label{prop:continuityL}
Let $ \mathcal{T}^{(\varepsilon)} \subset \averaging (E) $ be point-wise convergent to the $\spaceV(E)$-determining subset $\mathcal T$ of $\averaging (E) $ as $ \varepsilon \to 0 $. Then $\mathcal T^{(\varepsilon)}$ is $\spaceV(E)$-determining, for $\varepsilon$ small enough. Moreover one has
\begin{equation}\label{eq:continuityL}
 \lim_{\varepsilon \to 0} \L (\mathcal{T}^{(\varepsilon)}, \spaceV (E) ) = \L (\mathcal{T}, \spaceV (E) ) .
\end{equation}
\end{proposition}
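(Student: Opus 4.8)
The plan is to prove the two assertions in order: first that $\mathcal T^{(\varepsilon)}$ inherits the determining property, and then that $\L(\mathcal T^{(\varepsilon)},\spaceV(E))\to\L(\mathcal T,\spaceV(E))$ by rewriting $\L$ as the supremum over $T\in\averaging(E)$ of a series whose summands are controlled \emph{uniformly in $\varepsilon$} by the mass normalization built into $\averaging(E)$, so that $\ell^1$-summability of $w$ alone handles the tail. For the determining property, fix a basis $v_1,\dots,v_N$ of $\spaceV(E)$; since $\mathcal T$ is determining, the generalized Vandermonde matrix $\vdm(\mathcal V,\mathcal T)=[T_i(v_j)]$ has trivial right kernel, hence rank $N$, so there are indices $i_1<\dots<i_N$ with $\det[T_{i_\ell}(v_j)]_{\ell,j=1}^N\neq0$. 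The hypothesis \eqref{eq:pointwiseconvergence} forces $T_i^{(\varepsilon)}(\omega)\to T_i(\omega)$ for each fixed $i$ and $\omega$ (the weights being strictly positive), so by continuity of the determinant $\det[T_{i_\ell}^{(\varepsilon)}(v_j)]_{\ell,j=1}^N\neq0$ for $\varepsilon$ small, and $\mathcal T^{(\varepsilon)}$ is $\spaceV(E)$-determining.

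For \eqref{eq:continuityL}, fix once and for all a basis $\{\vartheta_h\}_{h=1}^N$ of $\spaceV(E)$, put $C:=\max_h\|\vartheta_h\|_0$, and write $G:=[(\vartheta_h,\vartheta_k)_{\mathcal T,w}]_{h,k}$, $G^{(\varepsilon)}:=[(\vartheta_h,\vartheta_k)_{\mathcal T^{(\varepsilon)},w}]_{h,k}$. Expressing the \Riz representer as $K_T=\sum_{h,k}(G^{-1})_{hk}T(\vartheta_k)\vartheta_h$ and using \eqref{eq:rephrasingL}, one gets
\[
\L(\mathcal T,\spaceV(E))=\sup_{T\in\averaging(E)}\sum_{i=1}^{+\infty}\Big|\sum_{h,k=1}^N(G^{-1})_{hk}T(\vartheta_k)T_i(\vartheta_h)\Big|w_i,
\]
and the same formula with $G^{(\varepsilon)}$, $T_i^{(\varepsilon)}$ for $\mathcal T^{(\varepsilon)}$. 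The decisive observation is that every element of $\averaging(E)$ has mass $1$ (and the padding currents have mass $0$), so $|T(\vartheta_k)|\leq C$ and $|T_i(\vartheta_h)|,|T_i^{(\varepsilon)}(\vartheta_h)|\leq C$ for all $i$, all $\varepsilon$, all $T$. A dominated-convergence argument over $i$ — using precisely this uniform bound $C$ on the entries $T_i^{(\varepsilon)}(\vartheta_h)T_i^{(\varepsilon)}(\vartheta_k)$ together with $w\in\ell^1(\N)$ and the pointwise convergence $T_i^{(\varepsilon)}(\vartheta_h)\to T_i(\vartheta_h)$ — shows $G^{(\varepsilon)}\to G$, whence $(G^{(\varepsilon)})^{-1}\to G^{-1}$ and in particular $\kappa:=\sup_{\varepsilon\text{ small}}\max_{h,k}|(G^{(\varepsilon)})^{-1}_{hk}|<+\infty$. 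Consequently each summand $\big|\sum_{h,k}(G^{(\varepsilon)})^{-1}_{hk}T(\vartheta_k)T_i^{(\varepsilon)}(\vartheta_h)\big|$ is bounded by $C':=\kappa N^2C^2$, independently of $i$, $\varepsilon$, $T$.

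Now split the series at an index $M_0$. The tail $\sum_{i>M_0}(\cdots)w_i$ is bounded by $C'\sum_{i>M_0}w_i$, uniformly in $\varepsilon$ and $T$, hence $<\eta/2$ once $M_0$ is large, by summability of $w$; this holds simultaneously for the series defining $\L(\mathcal T,\spaceV(E))$ and $\L(\mathcal T^{(\varepsilon)},\spaceV(E))$. For the head $\sum_{i=1}^{M_0}(\cdots)w_i$, the $\varepsilon$-dependence of each summand runs through the finitely many scalars $(G^{(\varepsilon)})^{-1}_{hk}$ and $T_i^{(\varepsilon)}(\vartheta_h)$, which converge to $(G^{-1})_{hk}$ and $T_i(\vartheta_h)$; since $|T(\vartheta_k)|\leq C$ uniformly in $T$, for each $i\leq M_0$
\[
\sup_{T\in\averaging(E)}\Big|\,\big|\textstyle\sum_{h,k}(G^{(\varepsilon)})^{-1}_{hk}T(\vartheta_k)T_i^{(\varepsilon)}(\vartheta_h)\big|-\big|\sum_{h,k}(G^{-1})_{hk}T(\vartheta_k)T_i(\vartheta_h)\big|\,\Big|\leq C\sum_{h,k}\big|(G^{(\varepsilon)})^{-1}_{hk}T_i^{(\varepsilon)}(\vartheta_h)-(G^{-1})_{hk}T_i(\vartheta_h)\big|\;\xrightarrow[\varepsilon\to0]{}\;0,
\]
so the head difference is $<\eta/2$ for $\varepsilon$ small. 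Combining the two estimates and using $|\sup_T a(T)-\sup_T b(T)|\leq\sup_T|a(T)-b(T)|$ yields $|\L(\mathcal T^{(\varepsilon)},\spaceV(E))-\L(\mathcal T,\spaceV(E))|<\eta$ for $\varepsilon$ small, which is \eqref{eq:continuityL}.

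The essential point — and the only real obstacle — is the interchange of $\lim_{\varepsilon\to0}$ with the infinite sum over $i$ and the supremum over $T$: this is exactly where the uniform convergence of $\mathcal T^{(\varepsilon)}$ was invoked in Lemmata \ref{lemma:normconvergence} and \ref{lemma:lebesguesemicontinuity}, and what replaces it here is the remark that the built-in normalization $M(T)=1$ on $\averaging(E)$ makes all summands uniformly bounded independently of $\varepsilon$, so that the $\ell^1$-weights alone tame the tail uniformly; only pointwise convergence is then needed on the finitely many surviving terms. The one subtlety to keep track of is that $G^{(\varepsilon)}$ must remain invertible with uniformly bounded inverse for small $\varepsilon$, which is precisely the bridge between the two assertions of the proposition and follows from $G^{(\varepsilon)}\to G$.
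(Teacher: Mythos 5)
Your proof is correct, and it reaches \eqref{eq:continuityL} by a route whose mechanics differ from the paper's in two places. For the determining claim, you extract an invertible $N\times N$ minor of the (countable-row) Vandermonde matrix and use continuity of the determinant, whereas the paper gets it from positive definiteness of the limiting Gram matrix $G^{(\varepsilon)}\to G=\mathbb I$ computed in an orthonormal basis of $(\spaceV(E),(\cdot,\cdot)_{\mathcal T,w})$; both are sound (trivial right kernel of the infinite Vandermonde matrix means exactly that its rows span $\R^N$, so a nonsingular minor exists). For the limit \eqref{eq:continuityL}, both arguments start from the \Riz-representer rephrasing \eqref{eq:rephrasingL} and from Gram convergence, but the paper then builds $\varepsilon$-orthonormal bases via a continuous orthogonalization (cf. \eqref{eq:estimateetaII}) and proves the weighted convergence of the representers $\sum_i\|K_i-K_i^{(\varepsilon)}\|_0 w_i\to 0$ (see \eqref{eq:convergenceofkernel}), which yields the quantitative bound $|\L(\mathcal T^{(\varepsilon)},\spaceV(E))-\L(\mathcal T,\spaceV(E))|\leq\sum_i\|K_i-K_i^{(\varepsilon)}\|_0w_i$; you instead stay in a fixed basis, invert the Gram matrix, and run a tail/head truncation in which the tail is tamed uniformly in $\varepsilon$ and $T$ by the mass normalization $M(T)\le 1$ together with $w\in\ell^1(\N)$, and the finitely many head terms converge because $(G^{(\varepsilon)})^{-1}\to G^{-1}$ and $T_i^{(\varepsilon)}(\vartheta_h)\to T_i(\vartheta_h)$ per index (a consequence of \eqref{eq:pointwiseconvergence} since the weights are positive). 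What your version buys is economy: no eigendecomposition-continuity step, no construction of $\eta_h^{(\varepsilon)}$, and only the componentwise consequence of the point-wise convergence hypothesis is used (dominated convergence plus the mass bound recovers the rest); what the paper's version buys is the explicit Lipschitz-type estimate in terms of the representers, which is slightly more informative than bare convergence. Two cosmetic points to tidy up: the tail must be discarded simultaneously in both series, so the correct bound is $2C'\sum_{i>M_0}w_i$ (a harmless constant), and the constant $\kappa$ is defined only for $\varepsilon\le\varepsilon_0$, so the order of choices should be stated as: fix $\varepsilon_0$ with $G^{(\varepsilon)}$ invertible and $(G^{(\varepsilon)})^{-1}$ bounded, then choose $M_0$, then shrink $\varepsilon$ for the head.
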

Before proving Proposition \ref{prop:continuityL}, we point out that its combination with Theorem \ref{thm1} directly leads to the following:
\begin{corollary}[Continuity of $\opnorm{P^{(\varepsilon)}}$] \label{cor:continuityoperators}
Let $ \mathcal{T}^{(\varepsilon)} \subset \averaging (E) $ be point-wise convergent to the $\spaceV(E)$-determining subset $\mathcal T$ of $\averaging (E) $ as $ \varepsilon \to 0 $. Assume, for any $\varepsilon>0$, that $\haus^k(\support T_i^{(\varepsilon)}\cap \support T_j^{(\varepsilon)})=0$, whenever $i\neq j$. Similarly assume that $\haus^k(\support T_i\cap \support T_j)=0$ if $i\neq j$. Then
\begin{equation*}
\lim_{\varepsilon\to 0}\opnorm{P^{(\varepsilon)}}=\opnorm{P}\,.
\end{equation*}
\end{corollary}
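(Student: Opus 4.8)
The plan is to prove Proposition \ref{prop:continuityL} and then observe that Corollary \ref{cor:continuityoperators} follows by a sandwich argument combining it with Theorem \ref{thm1}. For Proposition \ref{prop:continuityL}, I would first show that point-wise convergence of $\mathcal T^{(\varepsilon)}$ to the determining set $\mathcal T$ forces $\mathcal T^{(\varepsilon)}$ to be determining for $\varepsilon$ small: since $\spaceV(E)$ is finite dimensional, the generalized Vandermonde-type Gram matrix $G^{(\varepsilon)}:=[(\vartheta_h,\vartheta_k)_{\mathcal T^{(\varepsilon)},w}]$ depends continuously on the sampled values $\{T_i^{(\varepsilon)}(\vartheta_h)\}$, which by \eqref{eq:pointwiseconvergence} converge to $\{T_i(\vartheta_h)\}$; hence $G^{(\varepsilon)}\to G$, and since $G$ is invertible (as $\mathcal T$ is determining and the bilinear form \eqref{scalarproduct} is a scalar product on $\spaceV(E)$) so is $G^{(\varepsilon)}$ for $\varepsilon$ small, i.e., the restriction of $(\cdot,\cdot)_{\mathcal T^{(\varepsilon)},w}$ to $\spaceV(E)$ is nondegenerate, forcing $\mathcal T^{(\varepsilon)}$ to be determining. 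Continuity of $G^{(\varepsilon)}\to G$ in turn gives, via any continuous orthogonalization algorithm $\mathcal A$ exactly as in the proof of Lemma \ref{lem:preparation}, the convergence $\max_h\|\eta_h^{(\varepsilon)}-\eta_h\|_0\to 0$ of the induced orthonormal bases; the only subtlety compared with Lemma \ref{lem:preparation} is that now only point-wise, not uniform, convergence is available, but this suffices because the Gram entries only involve the finitely many fixed forms $\vartheta_h$, so the dominated-convergence / summability argument still closes using $w\in\ell^1(\N)$.

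With the bases in hand, I would write, using the representation \eqref{eq:defconstantM},
\begin{equation*}
\L(\mathcal T^{(\varepsilon)},\spaceV(E))=\sup_{T\in\averaging(E)}\sum_{i=1}^{+\infty}\Big|\sum_{h=1}^N T_i^{(\varepsilon)}(\eta_h^{(\varepsilon)})\,T(\eta_h^{(\varepsilon)})\Big|w_i,
\end{equation*}
and estimate the difference with $\L(\mathcal T,\spaceV(E))$ by inserting and subtracting the three mixed terms $T_i(\eta_h)T(\eta_h)$, $T_i^{(\varepsilon)}(\eta_h)T(\eta_h)$, $T_i^{(\varepsilon)}(\eta_h^{(\varepsilon)})T(\eta_h)$, in the spirit of the telescoping done in Lemma \ref{lemma:normconvergence}. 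The term containing $(T_i-T_i^{(\varepsilon)})(\eta_h)$ is controlled by $\max_h\|T(\eta_h)\|$-type bounds (namely $|T(\eta_h)|\le\|\eta_h\|_0$ since $M(T)=1$) together with $\sum_i|(T_i-T_i^{(\varepsilon)})(\eta_h)|w_i\to 0$, which is precisely \eqref{eq:pointwiseconvergence} applied to the fixed form $\eta_h$; the terms containing $\eta_h^{(\varepsilon)}-\eta_h$ are controlled by $\|\eta_h^{(\varepsilon)}-\eta_h\|_0\to 0$ and the uniform bounds $|T_i^{(\varepsilon)}(\eta_h^{(\varepsilon)})|\le\|\eta_h^{(\varepsilon)}\|_0\le C$, $|T(\eta_h)|\le\|\eta_h\|_0$, again using $w\in\ell^1(\N)$. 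Since all estimates are uniform in $T\in\averaging(E)$ (thanks to $M(T)=1$), taking the supremum and then $\varepsilon\to 0$ gives \eqref{eq:continuityL}.

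For Corollary \ref{cor:continuityoperators}: under the stated hypothesis that $\haus^k(\support T_i^{(\varepsilon)}\cap\support T_j^{(\varepsilon)})=0$ for $i\ne j$ and the analogous condition on $\mathcal T$, Theorem \ref{thm1} applies both to each $\mathcal T^{(\varepsilon)}$ and to $\mathcal T$, yielding $\opnorm{P^{(\varepsilon)}}=\L(\mathcal T^{(\varepsilon)},\spaceV(E))$ and $\opnorm{P}=\L(\mathcal T,\spaceV(E))$; combining with the continuity \eqref{eq:continuityL} just proved gives $\lim_{\varepsilon\to 0}\opnorm{P^{(\varepsilon)}}=\lim_{\varepsilon\to 0}\L(\mathcal T^{(\varepsilon)},\spaceV(E))=\L(\mathcal T,\spaceV(E))=\opnorm{P}$, which is the claim. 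I expect the main obstacle to be the first half of Proposition \ref{prop:continuityL}, i.e.\ upgrading point-wise convergence to convergence of the orthonormal bases: one must check that the finitely-many-$\vartheta_h$ reduction genuinely makes the Gram-matrix argument of Lemma \ref{lem:preparation} go through with only \eqref{eq:pointwiseconvergence} rather than \eqref{eq:uniformconvergence}, and that "determining for $\varepsilon$ small" is correctly extracted from nondegeneracy of the limiting Gram matrix rather than needing a uniform-in-$\varepsilon$ statement. Once the bases converge in $\|\cdot\|_0$, the supremum-over-$\averaging(E)$ estimate is routine because the mass bound $M(T)=1$ decouples the $T$-dependence from the $\varepsilon$-dependence.
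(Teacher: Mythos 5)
Your proposal is correct and follows essentially the same route as the paper: the paper likewise obtains Corollary \ref{cor:continuityoperators} directly by combining Proposition \ref{prop:continuityL} with Theorem \ref{thm1}, and its proof of Proposition \ref{prop:continuityL} proceeds exactly as you sketch (Gram-matrix convergence from point-wise convergence of $\mathcal T^{(\varepsilon)}$ applied to the finitely many basis forms, hence determining-ness for small $\varepsilon$ and $\|\cdot\|_0$-convergence of the orthonormal bases, then a telescoping estimate of the Lebesgue constants uniform in $T$ thanks to $M(T)=1$ and $w\in\ell^1$). The only cosmetic difference is that the paper packages the final telescoping through the Riesz representers $K_i$, $K_i^{(\varepsilon)}$, which is the same computation in different notation.
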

\begin{proof}[Proof of Proposition \ref{prop:continuityL}]
Let us denote by $\{\eta_h\}_{h=1}^N$ an orthonormal basis of $\spaceV(E)$ with respect to the scalar product $(\cdot,\cdot)_{\mathcal T,w}$ and introcuce the Gram matrices $G_{j,h}=(\eta_j,\eta_h)_{\mathcal T,w}=\delta_{j,h}$, $G_{j,h}^{(\varepsilon)}=(\eta_j,\eta_h)_{\mathcal T^{(\varepsilon)},w}$, where $j,h=1,\dots,N$. We compute
\begin{align*}
&|G_{j,h}^{(\varepsilon)}-G_{j,h}|=\left|\sum_{i=1}^{+\infty}\left(T_i^{(\varepsilon)}(\eta_j)T_i^{(\varepsilon)}(\eta_h)- T_i(\eta_j)T_i(\eta_h)\right)w_i  \right|\\
=& \sum_{i=1}^{+\infty}|T_i(\eta_j)||(T_i^{(\varepsilon)}-T_i)(\eta_h)|w_i+ \sum_{i=1}^{+\infty}|T_i^{(\varepsilon)}(\eta_h)||(T_i^{(\varepsilon)}-T_i)(\eta_j)|w_i\\
\leq& \|\mathcal T(\eta_j)\|_{\ell^\infty}\|\mathcal T_i^{(\varepsilon)}(\eta_h)-\mathcal T(\eta_h)\|_{\ell_w^1} + \|\mathcal T_i^{(\varepsilon)}(\eta_h)\|_{\ell^\infty}\|\mathcal T_i^{(\varepsilon)}(\eta_j)-\mathcal T(\eta_j)\|_{\ell_w^1}\\
\leq& \|\eta_j\|_0\|\mathcal T_i^{(\varepsilon)}(\eta_h)-\mathcal T(\eta_h)\|_{\ell_w^1} + \|\eta_h\|_0\|\mathcal T_i^{(\varepsilon)}(\eta_j)-\mathcal T(\eta_j)\|_{\ell_w^1}.
\end{align*} 
Our assumption on the point-wise convergence of $\mathcal T^{(\varepsilon)}$ implies that $\|\mathcal T_i^{(\varepsilon)}(\eta_h)-\mathcal T(\eta_h)\|_{\ell_w^1}\to 0$ as $\varepsilon\to 0$. Note that we can improve this convergence to uniform in $h\in\{1,\dots,N\}$ (due to the finite dimensionality of $\spaceV(E)$), that is
\begin{equation}\label{eq:duetofiniteN}
\lim_{\varepsilon\to 0} \max_{1\leq h\leq N} \|\mathcal T_i^{(\varepsilon)}(\eta_h)-\mathcal T(\eta_h)\|_{\ell_w^1}=0\,.
\end{equation}
Hence $G^{(\varepsilon)}\to G=\mathbb I$ in any matrix norm. This has the following important consequences:
\begin{enumerate}[i)]
\item there exists $\varepsilon_0>0$ such that for any $\varepsilon\in(0,\varepsilon_0)$ $G^{(\varepsilon)}$ is positive definite and thus $\mathcal T^{(\varepsilon)}$ is $\spaceV(E)$-determininig;
\item for any such $\varepsilon$ there exists a   basis $\{\eta_h^{(\varepsilon)}\}_{h=1}^N$ of $\spaceV(E)$ that is orthonormal with respect to $(\cdot,\cdot)_{\mathcal T^{(\varepsilon)},w}$, and such that
\begin{equation}\label{eq:estimateetaII}
\lim_{\varepsilon\to 0}\max_{h=1,\dots,N}\|\eta_h-\eta_h^{(\varepsilon)}\|_0=0\,;
\end{equation}
\item for any $\varepsilon$ as above, denoting by $K_i:=\sum_{h=1}^N T_i(\eta_h)\eta_h$ the \Riz representer of $T_i$ in the space $(\spaceV(E),(\cdot,\cdot)_{\mathcal T,w})$ and by $K_i^{(\varepsilon)}:=\sum_{h=1}^N T_i^{(\varepsilon)}(\eta_h^{(\varepsilon)})\eta_h^{(\varepsilon)}$ the \Riz representer of $T_i^{(\varepsilon)}$ in the space $(\spaceV(E),(\cdot,\cdot)_{\mathcal T^{(\varepsilon)},w})$, we have
\begin{equation}\label{eq:convergenceofkernel}
\lim_{\varepsilon\to 0} \sum_{i=1}^{+\infty}\|K_i-K_i^{(\varepsilon)}\|_0w_i=0\,.
\end{equation}
\end{enumerate}
While \emph{i}) is immediate and \emph{ii}) can be derived from it (via e.g., the continuity of matrix eigendecomposition),   \eqref{eq:convergenceofkernel} is less evident. In order to show it we first write
\begin{align*}
&K_i-K_i^{(\varepsilon)} =\sum_{h=1}^NT_i(\eta_h^{(\varepsilon)}-\eta_h)\eta_h^{(\varepsilon)}-T_i(\eta_h)(\eta_h^{(\varepsilon)}-\eta_h)+(T_i-T_i^{(\varepsilon)})(\eta_h^{(\varepsilon)})\eta_h^{(\varepsilon)}\,,
\end{align*}
and then derive
\begin{align*}
& \sum_{i=1}^{+\infty}\|K_i-K_i^{(\varepsilon)}\|_0w_i \\ 
\leq & \sum_{i=1}^{+\infty}\sum_{h=1}^N \|\eta_h^{(\varepsilon)}-\eta_h\|_0\|\eta_h^{(\varepsilon)}+\eta_h\|_0w_i + \sum_{i=1}^{+\infty}\sum_{h=1}^N\|\eta_h^{(\varepsilon)}\|_0|(T_i-T_i^{(\varepsilon)})(\eta_h)| w_i \\
\leq & N\max_{h=1,\dots,N}\|\eta_h^{(\varepsilon)}-\eta_h\|_0\|\eta_h^{(\varepsilon)}+\eta_h\|_0 \|w\|_{\ell^1} + \sum_{h=1}^N\|\eta_h^{(\varepsilon)}\|_0 \max_j\|(\mathcal T-\mathcal T^{(\varepsilon)})(\eta_j)\|_{\ell_w^1}\,.
\end{align*}
Combining this estimate with   \eqref{eq:estimateetaII} and   \eqref{eq:duetofiniteN} leads to \eqref{eq:convergenceofkernel}.

We are left to prove that   \eqref{eq:convergenceofkernel} implies   \eqref{eq:continuityL}. To this aim we exploit   \eqref{eq:rephrasingL} and we write
\begin{align*}
\L(\mathcal T^{(\varepsilon)},\spaceV (E) )=&\sup_{T\in \averaging(E)}\sum_{i=1}^{+\infty}|T(K_i^{(\varepsilon)})|w_i\\
\leq& \sum_{i=1}^{+\infty}\sup_{T\in \averaging(E)}|T(K_i^{(\varepsilon)}-K_i)|w_i+ \sup_{T\in \averaging(E)}\sum_{i=1}^{+\infty}|T(K_i)|w_i\\
=&\sum_{i=1}^{+\infty}\|K_i^{(\varepsilon)}-K_i\|_0w_i+ \L(\mathcal T,\spaceV (E))\,,
\end{align*}
and similarly $\L(\mathcal T,\spaceV (E)) \leq \sum_{i=1}^{+\infty}\|K_i-K_i^{(\varepsilon)}\|_0w_i+ \L(\mathcal T^{(\varepsilon)},\spaceV (E))$. Hence
$$|\L(\mathcal T^{(\varepsilon)},\spaceV (E))-\L(\mathcal T,\spaceV (E))|\leq \sum_{i=1}^{+\infty}\|K_i-K_i^{(\varepsilon)}\|_0w_i$$
and the right hand side tends to $0$ as $\varepsilon\to 0$ due to \eqref{eq:convergenceofkernel}.
\end{proof}

\revised{
\subsection{Interplay between Hilbert and Banach structures} \label{sec:BanachHilbert}

Throughout Section \ref{sec:Background}, we have encountered different norms for $ \testforms (E) $. In particular, we observed that the norm $ \Vert \cdot \Vert_0 $ only makes $ \testforms (E) $ a Banach space. This makes the analysis of the relationship between $ \Vert I - P \Vert_{\mathrm{op}} $ and $ \Vert P \Vert_{\mathrm{op}} $ more involved.

Recall that, for every non-trivial projector $ P:X\rightarrow X $ on a normed space $X$, there exists $ 1 \leq C \leq 2 $ such that
\begin{equation}
    \Vert I - P \Vert_{\mathrm{op}} \leq C \Vert P \Vert_{\mathrm{op}} .
\end{equation}
When $X$ is in particular a Hilbert space, the constant $C$ can be taken equal to $1$.
More generally, the sharpest constant $C$ can be characterized \cite[Theorem 3]{Stern} as
$$ C = \min \{ C_{BM}, 1 + \Vert P \Vert_{\mathrm{op}}^{-1} \} ,$$
where $ C_{BM} $ is the Banach-Mazur constant. This quantity, roughly speaking, measures how much a normed space fails to be a inner product space. Despite $ C_{BM} $ may be difficult to compute, in the interpolation case Theorem \ref{thm1} allows to claim something about $ C $.

\begin{lemma} \label{lem:lowerboundL}
    Let $ \mathcal{T} := \{ T_1, \ldots, T_N \} $ be a unisolvent collection of currents for $ \spaceV (E) $, with Lebesgue constant $ \L(\mathcal T, \spaceV (E)) $. Let $\Pi: \testforms(E)\rightarrow \spaceV (E)\subset \testforms(E) $ be the corresponding interpolation operator. One has
    $$ \L(\mathcal T, \spaceV (E)) \geq 1 .$$
    In particular, if $ \haus^k(\mathrm{supp}(T_i) \cap \mathrm{supp}( T_j)) = 0 $ for $ i \ne j $, one has
    $$ \Vert \Pi \Vert_{\mathrm{op}} \geq 1. $$
\end{lemma}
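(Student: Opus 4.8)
The plan is to work directly with the interpolatory expression for the Lebesgue constant. Since $\mathcal{T} = \{T_1, \ldots, T_N\}$ is unisolvent for $\spaceV(E)$, the Lagrange basis $\{\omega_1, \ldots, \omega_N\}$ characterized by $T_i(\omega_j) = \delta_{i,j}$ is well defined (see \eqref{eq:LagrangeRepresentationWF}), and by \eqref{eq:defconstantL} we have
$$\L(\mathcal{T}, \spaceV(E)) = \sup_{T \in \averaging(E)} \sum_{i=1}^N \left| T(\omega_i) \right|.$$

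The key step is to test this supremum against a single admissible competitor, namely one of the currents $T_j$ themselves: by hypothesis $\mathcal{T} \subset \averaging(E)$, so each $T_j$ is a legitimate element over which the supremum is taken. Evaluating, $\sum_{i=1}^N |T_j(\omega_i)| = \sum_{i=1}^N \delta_{j,i} = 1$, whence $\L(\mathcal{T}, \spaceV(E)) \geq 1$, which is the first claim. For the second claim, under the additional hypothesis $\haus^k(\support T_i \cap \support T_j) = 0$ for $i \neq j$, Corollary \ref{cor:norminterp} gives $\opnorm{\Pi} = \L(\mathcal{T}, \spaceV(E))$, and combining with the bound just established we conclude $\opnorm{\Pi} \geq 1$.

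There is essentially no obstacle here: the only points to check are that the Lagrange basis exists (immediate from unisolvence) and that each $T_j$ is an admissible test current (immediate from $\mathcal{T} \subset \averaging(E)$, together with the fact that every element of $\averaging(E)$ has unit mass). One could alternatively note that $\opnorm{\Pi} \geq 1$ for any nontrivial projector, since $\Pi$ restricts to the identity on its range $\spaceV(E)$, so that $\opnorm{\Pi} \leq \L$ from Corollary \ref{cor:norminterp} already forces $\L \geq 1$; but the direct computation above is more transparent and does not rely on the equality case.
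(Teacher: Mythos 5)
Your proof is correct and follows essentially the same route as the paper: both test the supremum in \eqref{eq:defconstantL} against one of the currents $T_j \in \averaging(E)$ and use the duality relation $T_i(\omega_j)=\delta_{i,j}$ to get $\L \geq 1$, then invoke the equality case (Corollary \ref{cor:norminterp}, i.e.\ Theorem \ref{thm1} in the interpolatory setting) for $\opnorm{\Pi}\geq 1$. No gaps.
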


\begin{proof}
    Since $ \mathrm{supp}(T_j) $ is a compact set for each $ j = 1, \ldots, N $, one has
    $$ \L(\mathcal T,\spaceV (E)) =\sup_{T\in \averaging(E)} \sum_{i=1}^N \left|  T(\omega_i)\right| \geq \sum_{i=1}^N \left|  T_j(\omega_i)\right| = 1 , $$
    due to the duality relationship of the Lagrange functions. The second part of the claim follows directly from Theorem \ref{thm1}.
\end{proof}

As an immediate consequence of Lemma \ref{lem:lowerboundL}, one again retrieves that, under the hypotheses of such a result, $ C \leq 2 $. On the other hand, in many situations one deals with rather large Lebesgue constants, making $ C \leq 1 + \Vert \Pi \Vert_{\mathrm{op}}^{-1} = 1 + \L(\mathcal T,\spaceV (E))^{-1} $ close to $ 1 $. How far $ \L(\mathcal T,\spaceV (E)) $ is from $ 1 $ generally depends both on $ \mathcal T $ and $ \spaceV (E) $. 

For polynomial spaces, a lower bound for the operator norm of any projector $ P $ can be given. In the univariate case, this is a consequence of the Kharshiladze-Lozinski theorem \cite[\S 3, Theorem 1]{CheneyLight}. In the multivariate case this requires some further hypotheses on the domain; results with explicit constants are available for the unit $n$-ball \cite{Sundermann}. In both cases, the resulting lower bound is a function greater than or equal to $ 1 $, which increases with the polynomial degree. Working component-wise, one may interpret these bounds as (rough) lower bounds also for polynomial differential forms.
}

\section{Dependence of $\L$ on the reference domain: proof of Theorem \ref{thm2}} \label{sect:changing}
Most results regarding Lagrange interpolation of differential forms, usually applied to polynomial differential forms, are obtained on a reference set $ \widehat{E} $, which is usually taken to be the standard $n$-simplex
, and then mapped to a set $ E $ in the physical domain. While this procedure does not affect qualitative features, such as unisolvence, 
for $ k = 1, \ldots, n-1 $ it can substantially change the quantitative features of the approximation scheme, such as the Lebesgue constant $ \L(\mathcal{T}, \spaceV (E) ) $, as noted in \cite[\S 6]{ABRCalcolo}. This issue essentially arises from the lack of invariance of the averaging currents under the pushforward operator, discussed in Remark \ref{rmk:pullbackaveraged}.  \revised{The principal aim of this section thus consists in proving the second main result of the paper:

\noindent \textbf{Theorem $\boldsymbol{2}$} (Dependence of $ \L $ on $ E $)\textbf{.}
\emph{Let $ \varphi:\widehat E\rightarrow E $ be a $\mathscr C^1$-diffeomorphism. Then one has
	\begin{equation*}
		\L ({\mathcal T}, \mathscr V^k (E) ) \leq \left\|\prod_{j=1}^k \sigma^{(\varphi)}_j\right\|_{\mathscr C^0(\widehat E)} \left\| \frac 1{\prod_{j=n-k+1}^n\sigma^{(\varphi)}_j}\right\|_{\mathscr C^0(\widehat E)} \L (\widehat{\mathcal T}, \mathscr V^k (\widehat{E}) )\,,
	\end{equation*}
where $\left\{\sigma^{(\varphi)}_1,\dots,\sigma^{(\varphi)}_n\right\}$ are the singular value functions of the differential of $\varphi$ arranged in a non-increasing order, and $\|\cdot\|_{\mathscr C^0(\widehat E)}$ denotes the uniform norm of continuous functions on $\widehat E$.}}

Following Remark \ref{rmk:pullbackaveraged}, before proving Theorem \ref{thm2}, we need to study how the $\haus^k$-measure of $k$-dimensional subsets of $\widehat E$ varies under smooth mappings of the $n$-dimensional set $\widehat E$. We start from a closer look to the affine case:

\begin{example} \label{ex:volumeksimplex}
    Let $ \widehat S \subset \widehat E \subset \R^n $ be a $k$-simplex spanned by vertices $ \{\widehat v_0, \ldots, \widehat v_k \} $. 
    Denote by $ \widehat V $ the matrix whose columns are the vectors $ \{\widehat v_0, \ldots, \widehat v_k \} $. Then  $ \mathcal{H}^k (\widehat S) $, which is in fact the $k$-volume of $\widehat S  $, is 
    $$ \mathcal{H}^k (\widehat S) = \frac{1}{k!} \sqrt{\det (\widehat V^\top \widehat V)}.$$ 
    Let $ \varphi : \widehat{E} \to E:= \varphi(\widehat{E}) $ be an affine mapping with invertible linear part $ A \in \mathrm{GL}(n, \R) $ and $ S = \varphi (\widehat S) $. 
Denoting by $ V $ the matrix whose columns are the vectors $ \{\varphi(\widehat v_0), \ldots, \varphi(\widehat v_k) \} $, one has 
\begin{equation*}
    \mathcal{H}^k (S) = \frac{1}{k!} \sqrt{\det (V^\top V)} = \frac{1}{k!} \sqrt{\det ((A \widehat V)^\top A \widehat V)} = \frac{1}{k!} \sqrt{\det (\widehat V^\top A^\top A \widehat V)} .
\end{equation*}
Denoting by $ \sigma_1 \geq \ldots \geq \sigma_n $ the singular values of the matrix $ A $ and performing an SVD decomposition (see \cite[Theorem 2.4.1]{Golub} or \cite[p. 854]{Strang}) as $ A = U \Sigma W^T $, one finds that 
\begin{equation*}
   \frac{1}{k!} \sqrt{\det (\widehat V^\top A^\top A \widehat V)} = \frac{1}{k!} \sqrt{\det (\widehat V^\top W \Sigma^\top U^\top U \Sigma W^T \widehat V)} = \frac{1}{k!} \sqrt{\det (\widehat V^\top W \Sigma^\top \Sigma W^T \widehat V)} .
\end{equation*}
Exploiting the interlacing result \cite[Theorem 7.3.9]{HoJo85}, the latter term can be bounded from below as 
\begin{equation*}
   \sqrt{\det (\widehat V^\top W \Sigma^\top \Sigma W^T \widehat V)} \geq \left( \prod_{i=n-k+1}^n \sigma_{i} \right) \sqrt{\det (\widehat V^\top W W^T \widehat V)} = \left( \prod_{i=n-k+1}^n \sigma_{i} \right) \sqrt{\det (\widehat V^\top  \widehat V)} 
\end{equation*}
and, likewise, from above as
\begin{equation*}
   \sqrt{\det (\widehat V^\top W \Sigma^\top \Sigma W^T \widehat V)} \leq \left( \prod_{i=1}^{k} \sigma_{i} \right) \sqrt{\det (\widehat V^\top W W^T \widehat V)} = \left( \prod_{i=1}^{k} \sigma_{i} \right) \sqrt{\det (\widehat V^\top \widehat V)}.
\end{equation*}
Plugging this into the computation of $ \haus^k (S) = 1/k! \sqrt{ \det V^\top V} $, one finds 
\begin{equation*}
    \left( \prod_{i=n-k+1}^n \sigma_{i} \right) \mathcal{H}^k (\widehat S) \leq \mathcal{H}^k (S) \leq \left( \prod_{i=1}^{k} \sigma_{i} \right) \mathcal{H}^k (\widehat S).
\end{equation*}
Notice that equality holds when $ k = 0$ or $ k = n $.
\end{example}

Via the Area Formula (see, e.g., \cite[\S 3.3]{EvansGariepy}), Example \ref{ex:volumeksimplex} can be extended to $k$-rectifiable sets and $ \mathscr C^1 $-diffeomorphisms:

\begin{lemma}\label{lem:fede}
Let $ K \subset \R^n$ be a $k$-rectifiable set such that  $\haus^k (K) < + \infty$. Let $U$ be a neighborhood of $ K $ in $\R^n$ and let $\varphi:U\rightarrow \R^n$ be a $\mathscr C^1$-diffeomorphism on its image. Then one has
\begin{equation}\label{eq:mappingareas}
\left(\left\|\prod_{j=n-k+1}^n (\sigma^{(\varphi)}_j)^{-1}\right\|_{\mathscr{C}^0 (\bar K)} \right)^{-1} \haus^k(K)\leq \haus^k(\varphi(K))\leq \left\|\prod_{j=1}^k \sigma^{(\varphi)}_j\right\|_{\mathscr{C}^0 (\bar K)} \haus^k(K)\,,
\end{equation} 
where $\sigma_j^{(\varphi)}:U\rightarrow (0,+\infty)$ are the singular values functions of the matrix $D\varphi(\cdot)$ arranged in non-increasing order, i.e., for any $x\in U$, one has $\sigma_l^{(\varphi)}(x)\geq \sigma_{j}^{(\varphi)}(x)$ for $l>j$.
\end{lemma}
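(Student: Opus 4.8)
The plan is to reduce the rectifiable case to the infinitesimal linear picture of Example \ref{ex:volumeksimplex} via the Area Formula. First I would invoke the decomposition of a $\haus^k$-measurable $k$-rectifiable set recalled in Section \ref{sec:rectifiableset}: write $K=\bigcup_{i=0}^{+\infty}K_i$ with $\haus^k(K_0)=0$, the $K_i$ pairwise disjoint, and each $K_i$ ($i>0$) a $\haus^k$-measurable subset of a $\mathscr C^1$ embedded $k$-submanifold $M_i$ of $\R^n$. Since $\varphi$ is a $\mathscr C^1$-diffeomorphism it maps null sets to null sets, so $\haus^k(\varphi(K))=\sum_{i=1}^{+\infty}\haus^k(\varphi(K_i))$ and $\haus^k(K)=\sum_{i=1}^{+\infty}\haus^k(K_i)$; hence it suffices to prove the two-sided bound on each piece $K_i$, and then sum (the constants are uniform, being the $\mathscr C^0(\bar K)$-norms, which dominate the $\mathscr C^0$-norms on each $\bar K_i\subset \bar K$).

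Next, on a fixed piece $K_i\subset M_i$, I would parametrize locally by a $\mathscr C^1$ chart $g:\Omega\subset\R^k\to M_i$ and apply the Area Formula to the composition $\varphi\circ g$. This gives
\begin{equation*}
\haus^k(\varphi(K_i))=\int_{g^{-1}(K_i)} J_k(D(\varphi\circ g)(u))\,\de u
=\int_{g^{-1}(K_i)} J_k\big(D\varphi(g(u))\,Dg(u)\big)\,\de u,
\end{equation*}
where $J_k(B)=\sqrt{\det(B^\top B)}$ is the $k$-dimensional Jacobian, while the analogous formula with $\varphi=\mathrm{id}$ gives $\haus^k(K_i)=\int_{g^{-1}(K_i)} J_k(Dg(u))\,\de u$. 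The pointwise comparison now reads: for a fixed $x\in U$ and any injective linear $L:\R^k\to\R^n$ (here $L=Dg(u)$, $x=g(u)$),
\begin{equation*}
\Big(\prod_{j=n-k+1}^n\sigma_j^{(\varphi)}(x)\Big)\,J_k(L)\;\le\;J_k\big(D\varphi(x)\,L\big)\;\le\;\Big(\prod_{j=1}^k\sigma_j^{(\varphi)}(x)\Big)\,J_k(L).
\end{equation*}
This is exactly the linear-algebra inequality already carried out in Example \ref{ex:volumeksimplex} (with $A=D\varphi(x)$ and $\widehat V$ replaced by a matrix representation of $L$): perform an SVD $A=U\Sigma W^\top$, note $J_k(AL)=\sqrt{\det(L^\top W\Sigma^\top\Sigma W^\top L)}$, and apply the Cauchy interlacing/eigenvalue bound \cite[Theorem 7.3.9]{HoJo85} to the positive semidefinite matrix $\Sigma^\top\Sigma$, which sandwiches $L^\top W\Sigma^\top\Sigma W^\top L$ between $\big(\prod_{j=n-k+1}^n\sigma_j^2\big)L^\top L$ and $\big(\prod_{j=1}^k\sigma_j^2\big)L^\top L$ in the sense of determinants. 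Taking square roots gives the displayed pointwise inequality.

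Finally I would bound $\sigma_j^{(\varphi)}(x)$ by its sup over $\bar K_i$ (hence over $\bar K$) inside the integrals, integrate, and use the Area Formula in reverse to recognize $\int_{g^{-1}(K_i)}J_k(Dg(u))\,\de u=\haus^k(K_i)$; summing over $i$ closes the argument. The only genuinely delicate point is bookkeeping rather than substance: one must make sure the local charts and the Area Formula are applied on a countable measurable partition so that no overlap is double-counted, and that the constant functions $x\mapsto\prod\sigma_j^{(\varphi)}(x)$ are continuous (hence attain a finite max on the compact set $\bar K$) — this uses that $D\varphi$ is continuous and nonsingular on the neighborhood $U\supset K$, so its singular values are continuous and strictly positive there. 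Everything else is the determinant inequality of Example \ref{ex:volumeksimplex}, now read pointwise.
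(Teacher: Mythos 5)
Your proposal is correct and follows essentially the same route as the paper's proof: decompose the rectifiable set into a null piece plus countably many pieces of $\mathscr C^1$ submanifolds, apply the Area Formula in charts, and reduce the pointwise comparison of Jacobians to the SVD-plus-interlacing inequality of Example \ref{ex:volumeksimplex}. The only (harmless) organizational differences are that you establish both pointwise bounds at once by taking the SVD of $D\varphi(x)$ and work on a measurable partition, whereas the paper uses a partition of unity, takes the SVD of the chart differential, proves only the upper estimate directly, and obtains the lower one by applying the result to $\varphi^{-1}$ via the Inverse Function Theorem.
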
			
\begin{proof}
Recall that $K=\cup_{j=0}^\infty K_j$, where $K_{0}$ has zero $\haus^k$-measure, $K_i\cap K_j=\emptyset$ for distinct indices, and, for any $i>0$,  $K_i$ is a measurable subset of a $\mathscr C^1$ embedded $k$-submanifold $S_i$. For each $S_i$ consider an atlas $\{(\phi_\alpha^i:U_\alpha^i\rightarrow V_\alpha^i)\}$, $V_\alpha^i\subset \R^k$, and a partition of unity $\{\rho_\alpha^i\}$ subordinated to the atlas, i.e., $\rho_\alpha^i\in \mathscr C^\infty_c(U_\alpha^i,[0,1])$,  $\sum_\alpha \rho_\alpha^i=1$ on $S_i$. Finally, let $\psi_\alpha^i:=(\phi_\alpha^i)^{-1}:V_\alpha^i\rightarrow U_\alpha^i$.

Since $\varphi$ is a $\mathscr C^1$-diffeomorphism, it induces an atlas of $\varphi(S_i)$ and the partition of unity $\tilde \rho_\alpha^i:=\rho_\alpha^i\circ\varphi^{-1}$. We can compute $\haus^k(\varphi(K_i))$ by the Area Formula:
\begin{align*}
&\haus^k(\varphi(K_i))\\
=& \int_{\varphi(K_i)} \de \haus^k=\sum_\alpha \int_{\varphi(U_\alpha^i\cap K_i)} \tilde \rho_\alpha^i(\tilde x) \de \haus^k(\tilde x)\\
=&\sum_\alpha \int_{V_\alpha^i\cap \phi_\alpha^i(K_i)} \rho_\alpha^i(\psi_\alpha^i(y)) \Jac{D (\varphi\circ\psi_\alpha^i)}(y)\de \haus^k(y)\\
=&\sum_\alpha \int_{V_\alpha^i\cap \phi_\alpha^i(K_i)} \rho_\alpha^i(\psi_\alpha^i(y)) \Jac{D\varphi(\psi_\alpha^i(y))D\psi_\alpha^i(y)}\de \haus^k(y)\\
=&\sum_\alpha \int_{V_\alpha^i\cap \phi_\alpha^i(K_i)} \rho_\alpha^i(\psi_\alpha^i(y)) 
\sqrt{\det\left[(D\psi_\alpha^i(y))^\top (D\varphi(\psi_\alpha^i(y)))^\top   D\varphi(\psi_\alpha^i(y))D\psi_\alpha^i(y) \right]} \de \haus^k(y)\,.
\end{align*}
Here we denoted by $\Jac{A}$ the Jacobian of the $n$ by $k$ matrix $A$, see, e.g., \cite[\S 3.2]{EvansGariepy}.

Consider the singular value decomposition $D \psi_\alpha^i(y)=U(y)\Sigma(y)V(y)^\top$. Let 
$$\Sigma(y)^\top=[\Sigma_0^\top(y)|\mathbb O_{k,n-k}]:=[\mathrm{diag}(\sigma_1(y),\dots\sigma_k(y))|\mathbb O_{k,n-k}]\,,$$
where $\mathbb O_{k,n-k}$ is the zero matrix of size $k\times (n-k)$, and $\mathrm{diag}(\sigma_1(y),\dots\sigma_k(y))$ is the $k\times k$ diagonal matrix with $\sigma_1(y),\dots\sigma_k(y)$ as diagonal entries. Then we compute
\begin{align*}
&\sqrt{\det\left[(D\psi_\alpha^i(y))^\top (D\varphi(\psi_\alpha^i(y)))^\top   D\varphi(\psi_\alpha^i(y))D\psi_\alpha^i(y) \right]}\\
=&\sqrt{\det(V(y)\Sigma^\top(y)U^\top(y)(D\varphi(\psi_\alpha^i(y)))^\top   D\varphi(\psi_\alpha^i(y))U(y)\Sigma(y)V(y)^\top)}\\
=&\sqrt{\det(\Sigma_0^\top(y)\mathbb I_{n,k}^\top A_{\alpha,i}^\top(y)A_{\alpha,i}(y)\mathbb I_{n,k}\Sigma_0(y))}\\
=&\left(\prod_{j=1}^k\sigma_j(y)\right) \; \sqrt{\det(\mathbb I_{n,k}^\top A_{\alpha,i}^\top(y)A_{\alpha,i}(y)\mathbb I_{n,k})}\,,
\end{align*}
where $A_{\alpha,i}(y):=D\varphi(\psi_\alpha^i(y))U(y)$, and $(\mathbb I_{n,k})_{l,m}:=\delta_{l,m}.$  Using the Cauchy-Binet formula \cite[p. 68]{Shafarevich}, one has
\begin{align}\label{eq:mainareaestimate}
&\haus^k(\varphi(K_i))=\sum_\alpha \int_{V_\alpha^i\cap \phi_\alpha^i(K_i)} \rho_\alpha^i(\psi_\alpha^i(y))\left(\prod_{j=1}^k\sigma_j(y)\right)\; \sqrt{\det(\mathbb I_{n,k}^\top A_{\alpha,i}^\top(y)A_{\alpha,i}(y)\mathbb I_{n,k})} \de \haus^k(y)\notag \\
\leq& \sup_{\alpha}\left\| \sqrt{\det(\mathbb I_{n,k}^\top A_{\alpha,i}^\top(y)A_{\alpha,i}(y)\mathbb I_{n,k})}  \right\|_{L^{\infty}(V_\alpha^i\cap \phi_\alpha^i(K_i))}\cdot \sum_\alpha \int_{V_\alpha^i\cap \phi_\alpha^i(K_i)} \rho_\alpha^i(\psi_\alpha^i(y))\left(\prod_{j=1}^k\sigma_j(y)\right) \de \haus^k(y)\notag \\
=& \sup_{\alpha}\left\| \sqrt{\det(\mathbb I_{n,k}^\top A_{\alpha,i}^\top (y)A_{\alpha,i}(y)\mathbb I_{n,k})}  \right\|_{L^{\infty}(V_\alpha^i\cap \phi_\alpha^i(K_i))}\cdot \sum_\alpha \int_{V_\alpha^i\cap \phi_\alpha^i(K_i)} \rho_\alpha^i(\psi_\alpha^i(y))\Jac{D \psi_\alpha^i(y)} \de \haus^k(y)\notag \\
=& \sup_{\alpha}\left\| \sqrt{\det(\mathbb I_{n,k}^\top A_{\alpha,i}^\top (y)A_{\alpha,i}(y)\mathbb I_{n,k})}  \right\|_{L^{\infty}(V_\alpha^i\cap \phi_\alpha^i(K_i))}\cdot \sum_\alpha \int_{U_\alpha^i\cap K_i} \rho_\alpha^i(x) \de \haus^k(x)\notag \\
=& \sup_{\alpha}\left\| \sqrt{\det(\mathbb I_{n,k}^\top A_{\alpha,i}^\top (y)A_{\alpha,i}(y)\mathbb I_{n,k})}  \right\|_{L^{\infty}(V_\alpha^i\cap \phi_\alpha^i(K_i))}\cdot \haus^k(K_i)
\end{align}
If $A$ is an $l \times m$ matrix with $l\geq m$ having singular values $\sigma_1 \geq \dots \geq \sigma_m$, then the singular values $\sigma_1^{(1)} \geq  \dots \geq \sigma_{m-1}^{(1)}$ of the matrix $A^{(1)}$ obtained removing a column from $A$ satisfy
\begin{equation*}\label{eq:estimatesigma}
\sigma_1\geq \sigma_1^{(1)}\geq \sigma_2\geq \sigma_2^{(1)}\geq \cdots\geq\sigma_{m-1}\geq\sigma_{m-1}^{(1)}\geq \sigma_m\,,
\end{equation*}
see, e.g., \cite[Theorem 7.3.9]{HoJo85}. Taking $l,m=n$, and iterating $n-k$ times the interlacing inequalities of \eqref{eq:estimatesigma} (removing the last column at each step), we can estimate each of the sigular values 
$ \sigma_j^{(n-k)}$ of $A^{(n-k)}:=A\mathbb I_{n,k}$ by the corresponding singular value of $A$, i.e.,
\begin{equation*}
\sigma_j^{(n-k)}\leq \sigma_j,\;\;j=1,2,\dots,k\,.
\end{equation*}
Applying this result to $A_{\alpha,i}(y)$ for any $y\in V_{\alpha}^i$, the $j$-th singular value of $A_{\alpha,i}^{(n-k)}$ is bounded from above by the $j$-th singular value of $A_{\alpha,i}(y):=D\varphi(\psi_\alpha^i(y))U(y)$, which equals the $j$-th singular value $\sigma_j^{(\varphi)}(\psi_\alpha^i(y))$ of $D\varphi(\psi_\alpha^i(y))$ since $ U(y) $ is orthogonal. 
Thus
\begin{equation}\label{eq:productestimate}
\max_{y\in V_\alpha^i\cap \phi_\alpha^i(K_i)}\sqrt{\det(\mathbb I_{n,k}^\top A_{\alpha,i}^\top (y)A_{\alpha,i}(y)\mathbb I_{n,k})}\leq \max_{x\in U_\alpha^i\cap K_i}\prod_{j=1}^k \sigma_j^{(\varphi)}(x), \quad\forall i, \; \forall \alpha\,.
\end{equation}
The combination of \eqref{eq:mainareaestimate} with \eqref{eq:productestimate} proves 
$$ \haus^k(\varphi(K_i))\leq \sup_\alpha \left(\max_{x\in U_\alpha^i\cap K_i}\prod_{j=1}^k \sigma_j^{(\varphi)}(x)\right) \haus^k(K_i) = \sup_{x \in K_i} \left( \prod_{j=1}^k \sigma_j^{(\varphi)}(x) \right) \haus^k(K_i) ,$$
whence, summing over $i$, the rightmost inequality in \eqref{eq:mappingareas} follows. The leftmost one can be obtained repeating the same reasoning, replacing $\varphi$ by $\varphi^{-1}$ and using the Inverse Function Theorem \cite[Theorem 1.1.7]{Hormander}. \myqed
\end{proof}

\begin{remark}
While not needed for our purposes, one may wonder if Lemma \ref{lem:fede} may be extended to weaker assumptions using finer tools in geometric measure theory such as decomposability bundles \cite{AlMa16,BoNiRi24}. 
On the one hand, it is possible to expect that metric properties behave regularly under bi-Lipschitz mappings, but on the other one the differentials of $\varphi$ and $\varphi^{-1}$ might not be well-defined on a $k$-rectifiable set, which is negligible with respect to $ n$-dimensional measure. 
\end{remark}

We are now ready to prove Theorem \ref{thm2}:
\begin{proof}[Proof of Theorem \ref{thm2}]
Let us denote by $\widehat S_i:=\support \widehat T_i$ and by $ \widehat \tau_i $ its orientation. Then, using the notation $S_i=\varphi(\widehat S_i)$ and the set of elements of $\averaging(E)$ introduced in   \eqref{newproduct}, we can write
$$T_i:=\frac{[S_i, \tau_i]}{\haus^k(S_i)}=\frac{\haus^k(\widehat S_i)}{\haus^k( S_i)}\varphi_* \widehat T_i\,.$$
From \eqref{neworthogonalbasis} recall that $\{\widehat \eta_h\}_{h=1}^N$ is an orthonormal basis of $\spaceV(\widehat E)$ and, setting $\psi=\varphi^{-1}$, the set $\{\eta_h\}_{h=1}^N:=\{\psi^*\widehat \eta_h\}_{h=1}^N$ is an orthonormal basis of $\spaceV(E)$ with respect to the product $(\cdot,\cdot)_{\mathcal T,w}$, with $w_i=\left(\frac{\haus^k( S_i)} {\haus^k(\widehat S_i)}\right)^2\widehat w_i$. Let $T\in \averaging(E)$ and denote by $S$ its support with orientation $ \tau $, and recall that $ \widehat S $ and $ \widehat \tau $ are related to $ S $ and $ \tau $ as in Remark \ref{rmk:pullbackaveraged}. We compute:
\begin{align*}
&\sum_{i=1}^M \left\vert \sum_{h=1}^N w_i T_i (\eta_h) T (\eta_h) \right \vert
= \sum_{i=1}^M \left\vert \sum_{h=1}^N \left(\frac{\haus^k( S_i)} {\haus^k(\widehat S_i)}\right)^2\widehat w_i \frac{\haus^k(\widehat S_i)}{\haus^k( S_i)}\varphi_* \widehat T_i (\eta_h) T (\eta_h) \right \vert\\
=& \sum_{i=1}^M \left\vert \sum_{h=1}^N \frac{\haus^k( S_i)} {\haus^k(\widehat S_i)}\widehat w_i  \widehat T_i (\varphi^*\psi^*\widehat\eta_h) T (\psi^*\widehat\eta_h) \right \vert
\leq \sup_{i}\frac{\haus^k( S_i)} {\haus^k(\widehat S_i)} \sum_{i=1}^M \left\vert \sum_{h=1}^N \widehat w_i  \widehat T_i (\widehat\eta_h) \psi_*T (\widehat\eta_h) \right \vert\\
=& \sup_{i}\frac{\haus^k( S_i)} {\haus^k(\widehat S_i)} \frac{\haus^k(\psi(S))}{\haus^k(S)} \sum_{i=1}^M \left\vert \sum_{h=1}^N \widehat w_i \widehat T_i (\widehat\eta_h) \frac{[\psi(S), \widehat \tau]}{\haus^k(\psi(S))} (\widehat\eta_h) \right \vert\\
=& \sup_{i}\frac{\haus^k( S_i)} {\haus^k(\widehat S_i)} \frac{\haus^k(\widehat S)}{\haus^k(S)}  \sum_{i=1}^M \left\vert \sum_{h=1}^N \widehat w_i  \widehat T_i (\widehat\eta_h) \frac{[\widehat S, \widehat \tau]}{\haus^k(\widehat S)} (\widehat\eta_h) \right \vert\,.
\end{align*}
Hence
\begin{equation}\label{eq:stima1}
\sum_{i=1}^M \left\vert \sum_{h=1}^N w_i T_i (\eta_h) T (\eta_h) \right \vert\leq\sup_{i}\frac{\haus^k( S_i)} {\haus^k(\widehat S_i)}\; \frac{\haus^k(\widehat S)}{\haus^k(S)}\; \L(\widehat T,\spaceV(\widehat E))\,,
\end{equation}
since $\frac{[\widehat S, \widehat \tau]}{\haus^k(\widehat S)}\in \averaging(\widehat E)$. Note that \revised{the supremum over $ i $ is finite, and } we can give an upper bound to the first two factors in the above product using Lemma \ref{lem:fede}:
\begin{equation}\label{eq:stima2}
 \frac{\haus^k(\widehat S)}{\haus^k(S)}\leq \left\| \frac 1{\prod_{j=n-k+1}^n\sigma^{(\varphi)}_j}\right\|_{\mathscr C^0(\widehat E)} \quad \text{and} \quad \sup_{i}\frac{\haus^k( S_i)} {\haus^k(\widehat S_i)}\leq \left\|\prod_{j=1}^k \sigma^{(\varphi)}_j\right\|_{\mathscr C^0(\widehat E)}\,.
\end{equation}
Combining   \eqref{eq:stima1} and   \eqref{eq:stima2} 
gives
\begin{align*}
\L(T,\spaceV( E))=
\left\|\prod_{j=1}^k \sigma^{(\varphi)}_j\right\|_{\mathscr C^0(\widehat E)} \left\|\frac 1{\prod_{j=n-k+1}^n \sigma^{(\varphi)}_j}\right\|_{\mathscr C^0(\widehat E)} \L(\widehat T,\spaceV(\widehat E))\,,
\end{align*}
which concludes the proof. \myqed
\end{proof}

When $ \varphi $ is an affine mapping one gets a strong simplification of Lemma \ref{lem:fede}. Since the differential of $ \varphi $ is the constant matrix $ D \varphi = A $, in such a framework the estimate \eqref{eq:mappingareas} reads:
%
	\begin{equation*} \label{eq:minorazionevolume}
		  \left( \prod_{j=n-k+1}^n \sigma_j \right) \haus^k (S) \leq \haus^k (\varphi(S)) \leq \left( \prod_{j=1}^k \sigma_j \right) \haus^k (S) .
	\end{equation*}
Consequently, under this assumption Theorem \ref{thm2} may be stated as follows.
\begin{corollary} \label{cor:changingsimplex}
Let $ \varphi: \widehat E \to E $ be an affine mapping with linear part $ A $. Let $ \sigma_1 \geq \ldots \geq \sigma_n $ be the singular values of $ A $. 
Then
	\begin{equation} \label{eq:estimationchangingsimplex}
		\L ({\mathcal T}, \spaceV (E) ) \leq \frac{\prod_{j=1}^k \sigma_j}{\prod_{j=n-k+1}^n \sigma_j} \L (\widehat{\mathcal T}, \spaceV (\widehat{E}) ) .
	\end{equation}
\end{corollary}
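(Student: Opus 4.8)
The plan is to obtain Corollary \ref{cor:changingsimplex} as an immediate specialization of Theorem \ref{thm2}. First I would observe that when $\varphi:\widehat E\to E$ is affine with invertible linear part $A\in\mathrm{GL}(n,\R)$, the differential $D\varphi(\widehat x)=A$ is independent of the point $\widehat x\in\widehat E$; consequently each singular value function $\sigma^{(\varphi)}_j$ appearing in Theorem \ref{thm2} is the constant function equal to $\sigma_j$, the $j$-th singular value of $A$ arranged in non-increasing order. In particular $\varphi$ is a $\mathscr C^1$-diffeomorphism and the hypotheses of Theorem \ref{thm2} are met, so that \eqref{eq:estimationchangingsimplexI} applies.

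Next I would evaluate the two uniform norms on the right-hand side of \eqref{eq:estimationchangingsimplexI}. Since both $\prod_{j=1}^k\sigma^{(\varphi)}_j$ and $1/\prod_{j=n-k+1}^n\sigma^{(\varphi)}_j$ are constant functions on $\widehat E$ (the latter being well defined because invertibility of $A$ forces all $\sigma_j>0$), one has $\bigl\|\prod_{j=1}^k\sigma^{(\varphi)}_j\bigr\|_{\mathscr C^0(\widehat E)}=\prod_{j=1}^k\sigma_j$ and $\bigl\|1/\prod_{j=n-k+1}^n\sigma^{(\varphi)}_j\bigr\|_{\mathscr C^0(\widehat E)}=1/\prod_{j=n-k+1}^n\sigma_j$. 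Substituting these two identities into \eqref{eq:estimationchangingsimplexI} yields exactly \eqref{eq:estimationchangingsimplex}.

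An equivalent route, which avoids invoking Theorem \ref{thm2} as a black box, is to re-run the proof of that theorem in the affine case: feeding the affine form of Lemma \ref{lem:fede} displayed just above the statement, namely $\eqref{eq:minorazionevolume}$, into the chain of estimates \eqref{eq:stima1}--\eqref{eq:stima2}, the suprema $\sup_i \haus^k(S_i)/\haus^k(\widehat S_i)$ and the ratio $\haus^k(\widehat S)/\haus^k(S)$ are bounded by the constants $\prod_{j=1}^k\sigma_j$ and $1/\prod_{j=n-k+1}^n\sigma_j$ respectively, producing \eqref{eq:estimationchangingsimplex} directly.

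There is essentially no obstacle here: the whole content is that constancy of $D\varphi$ collapses the $\mathscr C^0(\widehat E)$-norms to plain products of the constant singular values, and that invertibility of $A$ keeps the denominators away from zero. If desired, one may add the remark that the ratio $\prod_{j=1}^k\sigma_j/\prod_{j=n-k+1}^n\sigma_j$ equals $1$ when $k=0$ or $k=n$, so that for those extremal orders the bound reduces to $\L(\mathcal T,\spaceV(E))\le \L(\widehat{\mathcal T},\spaceV(\widehat E))$, consistently with the equality case already observed at the end of Example \ref{ex:volumeksimplex}.
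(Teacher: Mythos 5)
Your proof is correct and follows the same route as the paper: since $D\varphi\equiv A$ is constant, the singular value functions $\sigma_j^{(\varphi)}$ are the constants $\sigma_j$, so the two $\mathscr C^0(\widehat E)$-norms in \eqref{eq:estimationchangingsimplexI} collapse to $\prod_{j=1}^k\sigma_j$ and $1/\prod_{j=n-k+1}^n\sigma_j$, giving \eqref{eq:estimationchangingsimplex} directly from Theorem \ref{thm2}. The paper treats this as an immediate specialization in exactly this way, so no further comment is needed.
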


Corollary \ref{cor:changingsimplex} relates with the literature, where similar results have been proved for $ k = 1 $ (see \cite[Lemma 2 and Proposition 1]{ABRCalcolo}), 
and for a general $ k $ (see \cite[Proposition 3.21]{BruniThesis}) under stricter assumptions. 
These results exploit different techniques, which for $ k > 1 $ in turn yield the weaker result
\begin{equation} \label{eq:oldestimationchangingsimplex}
	\L ({\mathcal T}, \spaceV (E) ) \leq (\mathrm{Cond}_2 (A))^k \, \L (\widehat{\mathcal T}, \spaceV (\widehat{E}) ),
\end{equation}
the symbol $ \mathrm{Cond}_2 $ denoting the \revised{condition number\ } $ \mathrm{Cond}_2 := \sigma_1 / \sigma_n $ of the matrix $ A $ in the $2$-norm. Indeed,  \eqref{eq:estimationchangingsimplex} implies   \eqref{eq:oldestimationchangingsimplex}, since
\begin{align*} \L ({\mathcal T}, \spaceV (E) ) & \leq \frac{\prod_{j=1}^k \sigma_j}{\prod_{j=n-k+1}^n \sigma_j} \L (\widehat{\mathcal T}, \spaceV (\widehat{E}) ) \leq \frac{\sigma_1^k}{\sigma_n^k} \L (\widehat{\mathcal T}, \spaceV (\widehat{E}) ) 
\\ & = (\mathrm{Cond}_2 (A))^k \L (\widehat{\mathcal T}, \spaceV (\widehat{E}) ) .
\end{align*}
Further, \eqref{eq:estimationchangingsimplex} shows that   \eqref{eq:oldestimationchangingsimplex} is pessimistic for $ k > \lfloor \frac{n}{2} \rfloor $. In fact, assume that $ k > \lfloor \frac{n}{2} \rfloor $. Then $ \sigma_k $ appears both at the numerator and the denominator of the right hand side of   \eqref{eq:estimationchangingsimplex}, and hence cancels out. In turn, we immediately deduce that   \eqref{eq:oldestimationchangingsimplex} is improved by
$$ \L ({\mathcal T}, \spaceV (E) ) \leq \min \left\{ (\mathrm{Cond}_2 (A))^k, (\mathrm{Cond}_2 (A))^{n-k} \right\} \L (\widehat{\mathcal T}, \spaceV (\widehat{E}) ) .$$
Notice also that
$$ \min \left\{ (\mathrm{Cond}_2 (A))^k, (\mathrm{Cond}_2 (A))^{n-k} \right\} =
\begin{cases}
	(\mathrm{Cond}_2 (A))^k \quad & \text{if } k \leq \lfloor \frac{n}{2} \rfloor , \\
	(\mathrm{Cond}_2 (A))^{n-k} \quad & \text{if } k > \lfloor \frac{n}{2} \rfloor .
\end{cases} $$

\revised{\begin{remark} One may apply Theorem \ref{thm2} to $\varphi$ and $\varphi^{-1}$ to show that the Lebesgue constant \eqref{eq:defconstantM} does not depend on the supporting domain $ E $ for $ k = 0 $ or $ k = n $. Such a claim is known in the nodal interpolatory framework (see, e.g., \cite{Hesthaven}), and has been explicitly shown for top dimensional forms only for $ n = 1 $, see \cite{BruniErb23}. For $0<k<n$ the dependence of $\L$ on the reference domain $E$ has been only numerically observed, see \cite[\S 6]{ABRCalcolo} and \cite[\S 4.2.1]{BruniThesis}.
\end{remark}}

  \bibliographystyle{abbrv} 
  \bibliography{bibliography}

\end{document}